\theoremstyle{plain}
    \newtheorem{thm}{Theorem}
    \newtheorem{cor}[thm]{Corollary}
    \newtheorem{conjecture}{Conjecture}
    \newtheorem{prop}{Proposition}
    \newtheorem{lemma}[prop]{Lemma}
    \newtheorem*{examplethm}{Example Theorem}
\theoremstyle{definition}
    \newtheorem{defn}[prop]{Definition}
\theoremstyle{remark}
    \newtheorem{rem}[prop]{Remark}
\def\tr{{\mathrm{tr}}}
\def\UH{\mathcal{UH}}
\def\X{\mathbb{X}}
\def\SO{{\mathrm{SO}}}
\def\be{\begin{equation}}
\def\ee{\end{equation}}
\def\bm{\begin{matrix}}
\def\em{\end{matrix}}
\newcommand{\C}{\mathbb{C}}\newcommand{\R}{\mathbb{R}}\newcommand{\Q}{\mathbb{Q}}\newcommand{\Z}{\mathbb{Z}}\newcommand{\N}{\mathbb{N}}
\newcommand{\D}{\mathbb{D}}
\renewcommand{\P}{\mathbb{P}}
\renewcommand{\setminus}{\smallsetminus}
\renewcommand{\emptyset}{\varnothing}
\renewcommand{\Im}{\mathrm{Im}\;}
\newcommand{\SL}{\mathrm{SL}}
\newcommand{\id}{\mathit{id}}
\def\sl{\mathrm{{sl}}}
\newcommand{\comm}[1]{}
\begin{document}

\title[Stratified analyticity of the Lyapunov exponent]
{Global theory of one-frequency Schr\"odinger operators I:
stratified analyticity of the Lyapunov exponent and the boundary of
nonuniform hyperbolicity}

\author[A.~Avila]{Artur Avila}
\address{CNRS UMR 7599,
Laboratoire de Probabilit\'es et Mod\`eles al\'eatoires.
Universit\'e Pierre et Marie Curie--Bo\^\i te courrier 188.
75252--Paris Cedex 05, France}
\thanks{This work was partially conducted during the period the author
served as a Clay Research Fellow.}
\curraddr{IMPA,
Estrada Dona Castorina 110, Rio de Janeiro, Brasil} 
\urladdr{www.impa.br/$\sim$avila/}
\email{artur@math.sunysb.edu}

\begin{abstract}
We study Schr\"odinger operators with a one-frequency analytic
potential, focusing on the transition
between the two distinct local regimes
characteristic respectively of large and small potentials.  From the
dynamical point of view, the transition signals the emergence of nonuniform
hyperbolicity, so the dependence of the Lyapunov exponent with respect to
parameters plays a central role in the analysis.
Though often ill-behaved by conventional measures, 
we show that the Lyapunov exponent is in fact remarkably regular in a
``stratified sense'' which we define: the irregularity comes from
the matching of nice (analytic or smooth) functions along sets with
complicated geometry.
This result allows us to stablish
that the ``critical set'' for the transition
has at most codimension one, so for a typical potential
the set of critical energies
is at most countable, hence typically
not seen by spectral measures.
Key to our approach are two results about the dependence
of the Lyapunov exponent of one-frequency $\SL(2,\C)$ cocycles
with respect to perturbations in the imaginary direction:
on one hand there is a severe
``quantization'' restriction, and on the other hand ``regularity'' of the
dependence characterizes uniform hyperbolicity when
the Lyapunov exponent is positive.  Our method is
independent of arithmetic conditions on the frequency.
\end{abstract}

\date{\today}

\maketitle




\section{Introduction}

This work is concerned with the dynamics of one-frequency
$\SL(2)$ cocycles, and has two distinct aspects: the analysis, from a new
point of view, of the dependence of the Lyapunov exponent with respect to
parameters, and the study of the ``boundary'' of nonuniform hyperbolicity.
But our underlying motivation is to build a global theory of one-frequency
Schr\"odinger operators with general analytic potentials, so we will start
from there.

\subsection{One-frequency Schr\"odinger operators}

A one-dimensional quasiperiodic
Schr\"odinger operator with one-frequency analytic potential
$H=H_{\alpha,v}:\ell^2(\Z) \to \ell^2(\Z)$ is given by
\be
(Hu)_n=u_{n+1}+u_{n-1}+v(n \alpha) u_n,
\ee
where $v:\R/\Z \to \R$ is an analytic function (the potential), and $\alpha
\in \R \setminus \Q$ is the frequency.
We denote by $\Sigma=\Sigma_{\alpha,v}$ be the spectrum of $H$.
Despite many recent advances (\cite {BG}, \cite {GS}, \cite {B},
\cite {BJ1}, \cite
{BJ2}, \cite {AK1}, \cite {GS1},
\cite {GS2}, \cite {AJ2}, \cite {AFK}, \cite {A})
key aspects of an authentic
``global theory'' of such operators have been missing.
Namely, progress
has been made mainly into the understanding of the behavior in regions
of the spectrum belonging to two regimes with (at least some of the)
behavior caracteristic,
respectively, of ``large'' and ``small'' potentials.
But the transition between the two regimes has
been considerably harder to understand.

Until now, there has been only one case where the analysis has genuinely
been carried out at a global level.
The almost Mathieu operator, $v(x)=2
\lambda \cos 2 \pi (\theta+x)$, is a highly symmetric model for which
coupling strengths $\lambda$ and $\lambda^{-1}$ can be related
through the Fourier transform (Aubry duality).  Due to this unique feature,
it has been possible to stablish that the transition happens precisely at
the (self-dual) critical coupling $|\lambda|=1$: in the subcritical regime
$|\lambda|<1$ all energies in the
spectrum behave as for small potentials, while in the supercritical regime
$|\lambda|>1$ all energies in the spectrum behave as for large potentials.
Hence typical almost Mathieu operators fall entirely in one
regime or the other.  Related to this simple phase
transition picture, is the fundamental spectral result of \cite {J}, which
implies that the spectral measure of a typical Almost Mathieu operator
has no singular continuous components (it is either typically
atomic for $|\lambda|>1$ or typically absolutely continuous for
$|\lambda|<1$).

One precise way to distinguish the subcritical and the supercritical regime
for the almost Mathieu operator is by means of the Lyapunov exponent. 
Recall that for $E \in \R$, a formal solution $u \in \C^\Z$
of $Hu=Eu$ can be reconstructed from
its values at two consecutive points by application of $n$-step transfer
matrices:
\be
A_n(k \alpha) \cdot \left (\bm u_k \\ u_{k-1} \em \right )=
\left (\bm u_{k+n} \\ u_{k+n-1} \em \right ),
\ee
where $A_n:\R/\Z \to \SL(2,\R)$, $n \in \Z$,
are analytic functions defined on the same band of analyticity of $v$, given
in terms of $A=\left (\bm E-v & -1 \\ 1 & 0 \em \right )$ by
\be \label {2}
A_n(\cdot)=A(\cdot+(n-1) \alpha) \cdots A(\cdot),\; A_{-n}(\cdot)=A_n(\cdot-n \alpha)^{-1},\;
n \geq 1,\; A_0(\cdot)=\id,
\ee
The Lyapunov exponent at energy $E$ is denoted by $L(E)$ and given by
\be \label {3}
\lim_{n \to \infty} \frac {1} {n} \int_{\R/\Z} \ln \|A_n(x)\| dx \geq 0.
\ee
It follows from the Aubry-Andr\'e formula (proved by Bourgain-Jitomirskaya
\cite {BJ1}) that $L(E)=\max \{0,\ln |\lambda|\}$ for $E \in
\Sigma_{\alpha,v}$.  Thus the supercritical regime can be
distinguished by the positivity of the Lyapunov exponent:
supercritical
just means {\it nonuniformly hyperbolic} in dynamical systems terminology.

How to distinguish subcritical energies from critical ones
(since both have zero Lyapunov exponent)?  One way could be in terms of
their stability: critical energies are in the boundary of the
supercritical regime, while subcritical ones are far away.
Another, more
intrinsic way, consists of looking at the complex extensions of the $A_n$:
it can be shown (by a combination of \cite {J} and \cite {JKS}) that
for subcritical energies we have a uniform subexponential bound
$\ln \|A_n(z)\|=o(n)$ through a band $|\Im z|<\delta(\lambda)$, while for
critical energies this is not the case (it follows from \cite {H}).
(See also the Appendix for a rederivation of both facts in the spirit of
this paper.)

This being said, this work is not concerned with the almost Mathieu,
whose global theory is very advanced.  Still, what we know about it
provides a powerful hint about how to approach the general theory.  By
analogy, we can always classify energies in the spectrum of an operator
$H_{\alpha,v}$ as supercritical, subcritical, or critical in terms of the
growth behavior of (complex extensions of) transfer matrices,\footnote{
That large
potentials fall into the supercritical regime then follows from \cite {SS}
and that small potentials fall into the subcritical one is a consequence of
\cite {BJ1} and \cite {BJ2}.} though differently from the almost Mathieu
case the coexistence of regimes is possible \cite {Bj1}.
Beyond the ``local'' problems of describing precisely the behavior at the
supercritical and subcritical regimes, a proper global theory should
certainly explain how the ``phase transition'' between them occurs,
and how this
critical set of energies affects the spectral analysis of $H$.

In this direction, our main result can be stated as follows.
Let $C^\omega_\delta(\R/\Z,\R)$ be the real Banach space of analytic
functions $\R/\Z \to \R$ admitting a holomorphic extension to $|\Im
z|<\delta$ which is continuous up to the boundary.

\begin{thm} \label {cod}

For any $\alpha \in \R \setminus \Q$, the set of potentials and energies
$(v,E)$ such that $E$ is a critical energy for $H_{\alpha,v}$ is contained in
a countable union of codimension-one analytic submanifolds of
$C^\omega_\delta(\R/\Z,\R) \times \R$.\footnote {A codimension-one analytic
submanifold is a (not-necessarily closed) set $X$ given locally (near any
point of $X$) as the zero set of an analytic submersion
$C^\omega_\delta(\R/\Z,\R) \to \R$.}

\end{thm}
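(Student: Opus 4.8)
The plan is to reduce criticality to a condition on the \emph{acceleration} of the complexified Lyapunov exponent, decompose the critical set accordingly, and cover each piece locally by the zero set of an honest analytic function. Recall that for real $v$ the function $\epsilon\mapsto L_\epsilon(v,E):=\lim_n \tfrac1n\int_{\R/\Z}\ln\|A_n(x+i\epsilon)\|\,dx$ is convex and even, and by the quantization theorem all its slopes lie in $2\pi\Z$; let $\omega(v,E)\in\Z_{\ge0}$ be its right derivative at $0$ divided by $2\pi$. If $L_0(v,E)=0$ then $E\in\Sigma_{\alpha,v}$ (otherwise the cocycle is uniformly hyperbolic, so $L_0>0$), and among such energies subcriticality is the case $\omega=0$ and criticality the case $\omega\ge1$. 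Hence
\[
\cK:=\{(v,E):E\text{ critical for }H_{\alpha,v}\}=\bigcup_{k\ge1}\cK_k,\qquad \cK_k:=\{(v,E):L_0(v,E)=0,\ \omega(v,E)=k\}.
\]
For $(v,E)\in\cK_k$ the first affine piece of $\epsilon\mapsto L_\epsilon(v,E)$ is an interval $[0,c_0(v,E)]$ with $c_0(v,E)>0$ on which $L_\epsilon(v,E)=2\pi k\epsilon$; so $\cK_k=\bigcup_{m\ge1}\cK_{k,m}$, where $\cK_{k,m}:=\{(v,E)\in\cK_k:c_0(v,E)>1/m\}$, and it suffices to cover each $\cK_{k,m}$.

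Fix $k,m$ with $1/m<\delta$ and set $\epsilon_*:=1/(2m)$. Let $(v_0,E_0)\in\cK_{k,m}$. Then $\epsilon_*\in(0,c_0(v_0,E_0))$, so $L_{\epsilon_*}(v_0,E_0)=2\pi k\epsilon_*>0$ and $\epsilon\mapsto L_\epsilon(v_0,E_0)$ is regular (affine) near $\epsilon_*$; by the regularity theorem the cocycle $x\mapsto A^{(v_0,E_0)}(x+i\epsilon_*)$ is uniformly hyperbolic. Since uniform hyperbolicity is open and the Lyapunov exponent of a uniformly hyperbolic $\SL(2,\C)$ cocycle depends analytically on the cocycle, $(v,E)\mapsto L_{\epsilon_*}(v,E)$ is real analytic on a neighbourhood $\cU$ of $(v_0,E_0)$ in $C^\omega_\delta(\R/\Z,\R)\times\R$; moreover $(v,E)\mapsto\partial_\epsilon L_\epsilon(v,E)|_{\epsilon_*}$ is analytic and $2\pi\Z$-valued there, hence, after shrinking $\cU$, identically $2\pi k$. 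Put $F:=L_{\epsilon_*}-2\pi k\epsilon_*\colon\cU\to\R$. If $(v,E)\in\cK_{k,m}\cap\cU$ then $L_\epsilon(v,E)=2\pi k\epsilon$ on $[0,1/m]\ni\epsilon_*$, so $F(v,E)=0$; thus $\cK_{k,m}\cap\cU\subseteq\{F=0\}$.

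Covering $\cK_{k,m}$ by countably many such $\cU$ (Lindel\"of) and using that any positive-codimension analytic submanifold of a separable Banach manifold is a countable union of codimension-one ones (locally, forget one more coordinate), the theorem reduces to the following: if $F\not\equiv0$ on the connected open set $\cU$, then $\{F=0\}$ is a countable union of positive-codimension analytic submanifolds. This is a routine stratification: at a point where $F$ vanishes to finite order $\ell$, restrict to a finite-dimensional subspace on which the degree-$\ell$ part of $F$ is nonzero, use Weierstrass preparation to present $\{F=0\}$ locally as a finite branched cover of a Banach ball, and stratify by the (not identically vanishing) discriminant, inducting on $\ell$; every stratum then has codimension $\ge1$.

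The remaining point — which I expect to be the main obstacle — is that $F\not\equiv0$ near every point of $\cK_{k,m}$, i.e. that $(v,E)\mapsto L_{\epsilon_*}(v,E)$ is not locally constant near a critical $(v_0,E_0)$. The soft input available is complex translation: $v\mapsto v(\cdot+is)$ shifts the height, so $L_{\epsilon_*}(v_0(\cdot+is),E_0)=L_{\epsilon_*+s}(v_0,E_0)=2\pi k(\epsilon_*+s)$ for small real $s$, which shows that the holomorphic extension of $F$ to a complex neighbourhood has nonzero differential in the purely imaginary direction $iv_0'$; but this does \emph{not} by itself prevent $F$ from being constant along the real slice $C^\omega_\delta(\R/\Z,\R)\times\R$, since translation invariance forces the companion real direction $v_0'$ to be a null direction of $dF$. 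To settle it one should instead use the first-order variation of the Lyapunov exponent of the uniformly hyperbolic cocycle at height $\epsilon_*$, of the shape $\partial_vL_{\epsilon_*}(v_0,E_0)\cdot w=-\int_{\R/\Z}w(x)\,\phi_0(x)\,dx$ with $\phi_0$ built from the invariant unstable section, and show $\phi_0\not\equiv0$ — equivalently, that $v_0$ is not a critical point of $w\mapsto L_{\epsilon_*}(v_0+w,E_0)$. I expect this to require genuine input beyond soft arguments: a rigidity statement ruling out $L_\epsilon(v,E)\equiv2\pi k\epsilon$ on an open set (which would otherwise force $\cK$ to have nonempty interior, contradicting density of the subcritical and supercritical loci), the fine structure of the density of states of the complexified operator, or the stratified-analyticity machinery of the paper applied directly to $L_{\epsilon_*}$. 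This is the step I would expect to carry most of the weight.
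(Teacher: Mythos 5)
Your setup is essentially the paper's: decompose the critical locus by the (necessarily positive) value $j$ of the acceleration, move to a height $\epsilon_*$ at which the complexified cocycle is uniformly hyperbolic, and observe that $F=L_{\epsilon_*}-2\pi j\epsilon_*$ is analytic near such a parameter and vanishes on the critical stratum — your $F$ is the paper's $L_{\delta,j}$ from Proposition~\ref{pluri}, and your $\cK_{k,m}$ refinement is one way of organizing the countable cover. But the step you explicitly leave open — that $F$ is non-degenerate at a critical Schr\"odinger parameter — is precisely the content of the paper's Theorem~\ref{cod1}, and it is where all the work lies; without it the proof has a genuine gap.

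The paper closes this gap by a concrete computation, not by soft arguments or the density of states. With $(\alpha,A^{(v_*)})$ critical of acceleration $j>0$, the cocycle at height $\epsilon$ is uniformly hyperbolic for $0<\epsilon<\epsilon_0$, with holomorphic invariant directions $u,s$ on the half-band $0<\Im x<\epsilon_0$. Using the explicit first-variation formula for the Lyapunov exponent of a uniformly hyperbolic cocycle, the derivative of $L_{\delta,j}$ under $v_*\mapsto v_*+tw$ at $t=0$ is $\Re\int -w(x+i\epsilon)\,q_3(x+i\epsilon)\,dx$ with $q_3=us/(u-s)$. If this vanishes for all real $w$, testing against $\cos 2\pi kx$ and $\sin 2\pi kx$ gives the Fourier symmetry $\hat q_3(k)=-\overline{\hat q_3(-k)}$, which forces the Fourier series of $q_3$ — convergent a priori only for $0<\Im x<\epsilon_0$ — to converge on the full band $|\Im x|<\epsilon_0$ and to be purely imaginary on $\R/\Z$. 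Via $q_2=1/(u-s)$, $q_3=us/(u-s)$, and the Schr\"odinger-specific identity $q_2(x)=-q_3(x-\alpha)$, one deduces that $u$ and $s$ extend through $\Im x=0$ as distinct, non-real, complex-conjugate directions; this yields a real-analytic $B:\R/\Z\to\SL(2,\R)$ conjugating $A^{(v_*)}$ to an $\SO(2,\R)$-valued cocycle. Since $A^{(v_*)}$ is homotopic to a constant, that rotation cocycle has zero acceleration, hence so does $A^{(v_*)}$ — contradicting $j>0$. Your complex-translation observation (the imaginary direction has nonzero derivative, but its real companion is a null direction) is correct and nicely explains why some such computation is unavoidable. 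Finally, note that the paper establishes the stronger statement that $F$ is an honest submersion, which directly exhibits $\{F=0\}$ as a codimension-one analytic submanifold; the Weierstrass-preparation fallback you sketch, relying only on $F\not\equiv0$, is more delicate in the infinite-dimensional Banach setting and turns out to be unnecessary.
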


In particular, a typical operator $H$ will have at most countably many
critical energies.  In the continuation of this series \cite {A3},
this will be the starting point of the proof that the critical
set is typically empty.

It was deliberately implied in the discussion above that the non-critical
regimes were stable (with respect to perturbations of
the energy, potential or
frequency), but the critical one was not.  Stability of nonuniform
hyperbolicity was known (continuity of the Lyapunov exponent \cite
{BJ1}), while the stability of the subcritical regime is obtained here.  The
instability of the critical regime of course follows from Theorem \ref
{cod}.  The stability of the subcritical regime
implies that the critical set contains the boundary of
nonuniform hyperbolicity.\footnote {It is actually true that any critical
energy can be made supercritical under an arbitrarily small perturbation of
the potential, see \cite {A3}.}

In the next section we will decribe our results about
the dependence of the Lyapunov exponent with respect to parameters which
play a key role in the proof of Theorem \ref {cod}
and have otherwise independent interest.  In Section \ref {almost
reducibility},
we will further comment on how our work on
criticality relates to the spectral analysis of the operators, and in
particular how it gives a framework to address the following generalization
of Jitomirskaya's work \cite {J} about the almost Mathieu operator.

\begin{conjecture} \label {sc}

For a (measure-theoretically) typical operator $H$, the spectral measures
have no singular continuous component.

\end{conjecture}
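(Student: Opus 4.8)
The plan is to express the critical set as a countable union of zero loci of real-analytic functions — the Lyapunov exponents of the complexified cocycles at small positive imaginary height — and to show each such locus has codimension one. Recall the classification: $E$ is critical for $H_{\alpha,v}$ exactly when $L(E)=L_0(v,E)=0$ while the acceleration $\omega(v,E)$ — the right derivative at $\epsilon=0$ of the convex function $\epsilon\mapsto L_\epsilon(v,E):=\lim_n\frac1n\int\ln\|A^{(v,E)}_n(x+i\epsilon)\|\,dx$ — is positive. By the quantization result $\omega(v,E)\in\Z_{\ge0}$, so the critical set is $\mathcal{C}=\bigsqcup_{n\ge1}\mathcal{C}_n$ with $\mathcal{C}_n=\{(v,E):L_0(v,E)=0,\ \omega(v,E)=n\}$, and it suffices to cover each $\mathcal{C}_n$. (Continuity of the Lyapunov exponent and stability of the subcritical regime already show $\mathcal{C}$ is closed and lies in $\partial\{L_0>0\}$, but this does not by itself give codimension one.)

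To get analyticity I would lift to positive imaginary height. If $(v,E)\in\mathcal{C}_n$, then $\epsilon\mapsto L_\epsilon(v,E)$ is convex, even, nonnegative, vanishes at $0$, and has slopes in $2\pi\Z$; hence it equals $2\pi n|\epsilon|$ on a maximal interval $[-\epsilon_1,\epsilon_1]$ with $\epsilon_1=\epsilon_1(v,E)>0$. For any rational $\epsilon_0\in(0,\min(\epsilon_1,\delta))$ the cocycle $(\alpha,A^{(v,E)}(\cdot+i\epsilon_0))$ has positive Lyapunov exponent $2\pi n\epsilon_0$ and is regular (its Lyapunov exponent is affine in the height near $\epsilon_0$), so by the regularity criterion it is uniformly hyperbolic with acceleration $n$. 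Let $\Omega_{\epsilon_0,n}\subseteq C^\omega_\delta(\R/\Z,\R)\times\R$ be the set of $(v',E')$ for which $(\alpha,A^{(v',E')}(\cdot+i\epsilon_0))$ is uniformly hyperbolic with acceleration $n$; this is open (uniform hyperbolicity is open and the acceleration is locally constant on the hyperbolic locus), and $L_{\epsilon_0}$ is real-analytic on it. By construction $\mathcal{C}_n\subseteq\bigcup_{\epsilon_0\in\Q\cap(0,\delta)}X_{\epsilon_0,n}$, where $X_{\epsilon_0,n}:=\{(v',E')\in\Omega_{\epsilon_0,n}:L_{\epsilon_0}(v',E')=2\pi n\epsilon_0\}$.

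It then remains to show each $X_{\epsilon_0,n}$ is contained in a countable union of codimension-one analytic submanifolds; this is where the real work lies. Writing $f:=L_{\epsilon_0}-2\pi n\epsilon_0$, the part $\{f=0,\ df\ne0\}$ is a codimension-one analytic submanifold, and the only issue is the degenerate part $\{f=0,\ df=0\}$; it would suffice to know that $f$ does not vanish on any open subset of $\Omega_{\epsilon_0,n}$, since then the degenerate part sits in a proper real-analytic subset which stratifies into countably many positive-codimension analytic submanifolds. The \emph{main obstacle} is exactly this non-degeneracy. It cannot be verified one potential at a time: for the almost Mathieu operator at critical coupling, $f(v,\cdot)$ is constant on entire energy intervals. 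What must instead be shown is that from any point there is a perturbation of the \emph{potential} along which $L_{\epsilon_0}$ genuinely varies — a positivity/nonvanishing statement for the derivative of $L_{\epsilon_0}$ in the potential direction on the uniformly hyperbolic locus, a complexified analogue of the strict monotonicity of the integrated density of states — and this is precisely where the ``stratified analyticity'' of the Lyapunov exponent is used: its irregularity comes only from matching analytic pieces along complicated sets, never from a single piece being locally constant on an open set.

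Granting this, $\mathcal{C}$ is contained in a countable union — over $n\ge1$ and rational $\epsilon_0\in(0,\delta)$ — of codimension-one analytic submanifolds of $C^\omega_\delta(\R/\Z,\R)\times\R$, which is Theorem \ref{cod}; and since such a union meets a generic line $\{v\}\times\R$ in a set with empty interior (in fact countable), a typical $H_{\alpha,v}$ has at most countably many critical energies.
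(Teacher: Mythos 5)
The central issue is a mismatch of statements. Conjecture~\ref{sc} asserts absence of singular continuous spectrum for a typical $H$; the paper explicitly leaves this as a \emph{conjecture}, because the deduction it gives is conditional on the Almost Reducibility Conjecture (proved at the time only when $\alpha$ is exponentially well approximated by rationals). Your argument, even if completed, establishes only Theorem~\ref{cod} (codimension one of the critical set, hence at most countably many critical energies for a typical $v$). That is exactly one of three ingredients in the paper's conditional sketch, and it addresses only the \emph{critical} energies. To rule out singular continuous spectrum you must also handle the subcritical regime, where the paper invokes the ARC via \cite{A} to get purely absolutely continuous spectrum, and the supercritical regime, where the paper invokes \cite{BG} to get typically pure point spectrum. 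Moreover, ``countably many critical energies'' does not by itself mean those energies carry no spectral weight: the paper closes this gap by averaging over phases $v_\theta(x)=v(x+\theta)$ (which do not change the critical set) and using continuity of the integrated density of states \cite{AS} to conclude that a fixed countable set is spectrally negligible for a.e.\ phase. None of these three steps appears in your proposal.

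Even taken as a proof of Theorem~\ref{cod}, your sketch has a real gap precisely at what you yourself flag as ``the main obstacle.'' You observe that one must show $L_{\epsilon_0}$ is not locally constant near a critical cocycle, and you appeal to ``stratified analyticity'' as the reason. But stratified analyticity is a regularity assertion, not a non-degeneracy assertion; an analytic function can be constant on an open set, and nothing in Theorems~\ref{e}--\ref{frequen} forbids this. The needed content is Theorem~\ref{cod1}: $v\mapsto L_{\delta,j}(\alpha,A^{(v)})$ is a genuine submersion at any $v_*$ with $\omega(\alpha,A^{(v_*)})=j>0$. The paper's proof of that is a separate argument: it expresses the derivative of the Lyapunov exponent in terms of coefficients $q_i$ built from the unstable and stable directions $u,s$ of the uniformly hyperbolic complexified cocycles; shows that vanishing of the derivative against all real perturbations $w$ of the potential forces a Fourier symmetry that allows $u$ and $s$ to be continued analytically across $\Im x=0$; and concludes that the cocycle is then conjugate to an $\SO(2,\R)$-valued cocycle homotopic to a constant, hence has acceleration zero, contradicting $j>0$. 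Without supplying this (or an equivalent) argument, covering $\mathcal{C}_n$ by your sets $X_{\epsilon_0,n}$ does not yield codimension one.
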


\subsection{Stratified analyticity of the Lyapunov exponent}

As discussed above, the Lyapunov exponent $L$ is
fundamental in the understanding of the spectral properties of $H$.  It is
also closely connected with another important quantity, the
{\it integrated density of states} (i.d.s.)
$N$.  As the Lyapunov exponent, the
(i.d.s.) is a function of the energy: while
the Lyapunov exponent measures the asymptotic
average growth/decay of solutions (not necessary in $\ell^2$) of the
equation $Hu=E u$, the integrated density of states gives the
asymptotic distribution of eigenvalues of restrictions to large boxes.  Both
are related by the Thouless formula:
\be
L(E)=\int \ln |E'-E| dN(E').
\ee

Much work has been dedicated to the regularity properties
of $L$ and $N$.  For quite general reasons,
the integrated density of states is a continuous increasing function onto
$[0,1]$, and it is constant outside the spectrum.  Notice that this is not
enough to conclude continuity of the Lyapunov exponent from the
Thouless formula.  Other regularity properties (such as H\"older), do pass
from $N$ to $L$ and vice-versa.  This being said, our focus here is
primarily on the Lyapunov exponent on its own.

It is easy to see that the Lyapunov exponent is real analytic outside the
spectrum.  Beyond that, however,
there are obvious limitations to the regularity of the
Lyapunov exponent.  For a constant potential, say $v=0$, the Lyapunov
exponent is $L(E)=\max \{0, \ln \frac {1} {2} (E+\sqrt {E^2-4})\}$, so it is
only $1/2$-H\"older continuous.  With Diophantine frequencies and small
potentials, the generic situation is to have
Cantor spectrum with countably many square
root singularities at the endpoints of gaps \cite {E}.
For small potential and generic frequencies, it is possible to show that the
Lyapunov exponent escapes any fixed continuity modulus (such as H\"older),
and it is also not of bounded variation.
More delicately, Bourgain \cite {B} has observed that in the case of the
critical almost Mathieu operator
the Lyapunov exponent needs not be H\"older even for Diophantine
frequencies (another instance of complications arising
at the boundary of non-uniform hyperbolicity).  Though a surprising result,
analytic regularity, was obtained in a related, but non-Schr\"odinger,
context \cite {AK2}, the negative results described above seemed to impose
serious limitations on the amount of regularity one should even try to look
for in the Schr\"odinger case.

As for positive results, a key development was the proof by Goldstein-Schlag
\cite {GS} that the Lyapunov exponent is H\"older continuous
for Diophantine frequencies in the regime of
positive Lyapunov exponent.  Later Bourgain-Jitomirskaya \cite {BJ1}
proved that the
Lyapunov exponent is continuous for all irrational frequencies, and this
result played a fundamental role in the recent theory of the almost Mathieu
operator.  More
delicate estimates on the H\"older regularity for Diophantine frequencies
remained an important topic \cite {GS1}, \cite {AJ2}.

There is however one important case where, in a different sense,
much stronger regularity holds.  For small analytic potentials, it follows
from the work of Bourgain-Jitomirskaya (\cite {BJ1} and \cite {BJ2}, see
\cite {AJ2}) that the Lyapunov exponent is zero
(and hence constant) in the spectrum!  In general, however, the Lyapunov
exponent need not be constant in the spectrum.  In fact, there are examples
where the Lyapunov exponent vanishes in part of the spectrum and is positive
in some other part \cite {Bj}.
Particularly in this positive Lyapunov exponent regime,
it would seem unreasonable,
given the negative results outlined above, to expect much more regularity. 
In fact, from a dynamical systems perspective, it would be natural to expect
bad behavior in such setting, since when the Lyapunov exponent is positive,
the associated dynamical system in the
two torus presents ``strange attractors'' with
very complicated dependence of the parameters \cite {Bj1}.

In this respect, the almost Mathieu operator would seem to behave quite
oddly.  As we have seen, by the Aubry formula
the Lyapunov exponent is always
constant in the spectrum, and moreover, this constant is just a simple
expression of the coupling $\max \{0,\ln \lambda\}$ (and in particular, it
does become positive in the {\it supercritical regime} $\lambda>1$).
It remains true that
the Lyapunov exponent displays wild oscillations
``just outside'' the spectrum, so this is not inconsistent
with the negative results discussed above.

However, for a long time, the general feeling
has been that this just reinforces the special status of the almost
Mathieu operator (which admits a remarkable symmetry, Aubry duality,
relating the supercritical and the subcritical regimes), and
such a phenomenon would seem to have little to do with the case of
general potentials.  This general feeling is wrong, as the following sample
result shows.

\begin{examplethm} \label {example}

Let $\lambda>1$ and let $w$ be any real analytic function.
For $\epsilon \in \R$, let $v(x)=2 \lambda \cos 2 \pi x+
\epsilon w(x)$.  Then for $\epsilon$ small enough, for every $\alpha \in \R
\setminus \Q$, the Lyapunov exponent restricted to the spectrum is a
positive real analytic function.

\end{examplethm}

Of course by a real analytic function on a set
we just mean the restriction of some real analytic function defined on an
open neighborhood.

For an arbitrary real analytic potential, the situation is just slightly
lengthier to describe.  Let $X$ be a topological space.  A
stratification of $X$ is a strictly decreasing finite or countable sequence
of closed sets $X=X_0 \supset X_1 \supset \cdots$ such that
$\cap X_i=\emptyset$.  We call $X_i \setminus X_{i+1}$ the $i$-th {\it
stratum} of the stratification.

Let now $X$ be a subset of a real analytic manifold,
and let $f:X \to \R$ be a continuous function.  We say that
$f$ is $C^r$-stratified
if there exists a stratification such that the restriction of $f$ to each
stratum is $C^r$.

\begin{thm}[Stratified analyticity in the energy] \label {e}

Let $\alpha \in \R \setminus \Q$ and $v$ be any real analytic
function.  Then the Lyapunov exponent is a $C^\omega$-stratified
function of the energy.

\end{thm}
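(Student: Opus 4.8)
The plan is to reduce everything to the quantization of the acceleration together with real analyticity of the Lyapunov exponent on the locus of uniform hyperbolicity. I would work throughout with the complexified cocycle: for small real $\epsilon$ let $L(E,\epsilon)$ be the Lyapunov exponent of $(\alpha,A_E(\cdot+i\epsilon))$, so $L(E,0)=L(E)$; recall that $L(\cdot,\epsilon)$ is continuous in $E$, that $\epsilon\mapsto L(E,\epsilon)$ is convex and even, and --- this is the ``quantization'' input --- that it is piecewise affine with finitely many breakpoints on any compact interval and slopes in $2\pi\Z$. Set $\omega(E)=\frac1{2\pi}\partial_\epsilon^+L(E,\epsilon)|_{\epsilon=0}\in\Z_{\ge0}$. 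Two soft observations come first. Since by convexity the difference quotients $\frac1{2\pi\epsilon}(L(\cdot,\epsilon)-L(\cdot,0))$ are continuous in $E$ and decrease to $\omega$ as $\epsilon\downarrow0$, the function $\omega$ is upper semicontinuous; and since off $\Sigma$ the cocycle is uniformly hyperbolic (so $L(E,\cdot)$ is analytic and even near $0$, whence $\omega\equiv0$ there) while on the compact set $\Sigma$ upper semicontinuity forces a finite supremum $N:=\sup_E\omega(E)<\infty$, the sets $Y_k:=\{\omega\ge k\}$ are closed, nested, contained in $\Sigma$ for $k\ge1$, and $Y_{N+1}=\emptyset$.

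The technical heart is a local analyticity mechanism that trades the possibly degenerate behaviour of $E\mapsto L(E,0)$ (which can have genuine square-root singularities, already for $v\equiv0$) for the rigid behaviour of $E\mapsto L(E,\epsilon_*)$ at a well-chosen small level $\epsilon_*>0$. Fix $k\ge1$ and a point $E_0$ with $\omega(E_0)=k$. I would first show that the first positive breakpoint of $L(E,\cdot)$ stays bounded below by some $\rho>0$ as $E$ ranges over a neighbourhood of $E_0$ within $\{\omega=k\}$: otherwise, taking $E_n\to E_0$ in $\{\omega=k\}$ with breakpoints tending to $0$ and comparing $L(E_n,\cdot)$ with its tangent of slope $2\pi(k+1)$ at that breakpoint, continuity in $E$ and convexity would give $\partial_\epsilon^+L(E_0,\cdot)|_0\ge2\pi(k+1)$, contradicting $\omega(E_0)=k$. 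Now pick $0<\epsilon_*<\rho$. Then $L(E,\epsilon)=L(E,0)+2\pi k\epsilon$ on $[0,\epsilon_*]$ for all such $E$; in particular $L(E_0,\epsilon_*)=L(E_0,0)+2\pi k\epsilon_*\ge2\pi k\epsilon_*>0$ and $\epsilon_*$ is not a breakpoint of $L(E_0,\cdot)$, i.e. the level-$\epsilon_*$ cocycle at $E_0$ is regular with positive Lyapunov exponent, hence uniformly hyperbolic by the regularity-implies-uniform-hyperbolicity dichotomy. Uniform hyperbolicity is open in $E$, and by the same argument that makes $L$ analytic off the spectrum, $L(\cdot,\epsilon_*)$ is real analytic on the uniformly hyperbolic locus of the level-$\epsilon_*$ cocycle; so $g:=L(\cdot,\epsilon_*)-2\pi k\epsilon_*$ is real analytic on a neighbourhood $U$ of $E_0$, and $g(E)=L(E)$ for every $E\in U$ with $\omega(E)=k$.

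With this in hand the stratification is explicit and finite. I would take $X_0=\R$, $X_1=\Sigma$, $X_2=\overline{\Sigma^+}$ with $\Sigma^+:=\{E\in\Sigma:L(E)>0\}$ the supercritical set, and $X_{k+1}=X_2\cap Y_k$ for $1\le k\le N+1$; since an energy in $\Sigma$ with $L>0$ cannot be regular we have $\Sigma^+\subseteq Y_1$, hence $X_2\subseteq Y_1$ and $X_2=X_2\cap Y_1$, so $X_0\supseteq X_1\supseteq\dots\supseteq X_{N+2}=\emptyset$ is a nested sequence of closed sets, which after discarding repetitions is a finite stratification of $\R$. On the stratum $\R\setminus\Sigma$, $L$ is real analytic; on $\Sigma\setminus\overline{\Sigma^+}$ one has $L\equiv0$ (this set lies in $\{L=0\}$), so $L$ is trivially real analytic there; and for $1\le k\le N$ the stratum $X_{k+1}\setminus X_{k+2}=\overline{\Sigma^+}\cap\{\omega=k\}$ is contained in $\{\omega=k\}$, so near each of its points the mechanism of the previous paragraph exhibits $L$ as the restriction of the real analytic function $g$. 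That exhausts the strata.

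The main obstacle is the second paragraph, where the imported results do the real work. Everything rests on (i) quantization, which is what makes $\omega$ a well-defined integer and $\epsilon\mapsto L(E,\epsilon)$ piecewise affine with a genuine first breakpoint --- without integrality the limiting argument for the uniform lower bound $\rho>0$ collapses --- and on (ii) the characterization ``positive Lyapunov exponent plus regularity implies uniform hyperbolicity'', which upgrades the soft statement ``$\epsilon_*$ is not a breakpoint'' to ``$L(\cdot,\epsilon_*)$ is real analytic near $E_0$''. The remaining point, mostly bookkeeping, is to organize the filtration so that on each stratum $\omega$ is locally constant: this is why the spectrum and then the supercritical locus are peeled off first (forcing $\omega=0$, then $\omega\ge1$) before filtering by the level sets of $\omega$, after which upper semicontinuity pins $\omega$ to its stratum value on a neighbourhood intersected with the stratum. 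No further recursion is then needed.
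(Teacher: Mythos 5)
Your proof is correct and follows the same strategy as the paper: stratify by acceleration level, handle the uniformly hyperbolic locus and the zero-acceleration stratum directly, and on each positive-acceleration stratum exhibit $L$ as the restriction of an analytic function by shifting to a small imaginary level $\epsilon_*$ at which the cocycle becomes uniformly hyperbolic with the same acceleration, using Theorems \ref{quantized} and \ref{uniformly hyperbolic}. The organizational differences are cosmetic: you insert the extra level $\overline{\Sigma^+}$ between $\Sigma$ and the sets $\{\omega\ge k\}$ (the paper goes directly from $X_1=\Sigma$ to $X_j=\{\omega\ge j-1\}$ and uses Theorem \ref{uniformly hyperbolic} with Remark \ref{omega real} to get $L\equiv 0$ on the first stratum), and you re-derive locally, near each point of a stratum, the analyticity that Proposition \ref{pluri} packages globally via $\Omega_{\delta,j}$ and $L_{\delta,j}$.
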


As we will see, in this theorem
the stratification starts with $X_1=\Sigma_{\alpha,v}$, which is
compact, so the stratification is finite.

Nothing restricts us to look only at the energy as a parameter.  For
instance, in the case of the almost Mathieu operator, the Lyapunov exponent
(restricted to the spectrum) is real analytic also in the coupling constant,
except at $\lambda=1$.

\begin{thm}[Stratified analyticity in the potential] \label {v}

Let $\alpha \in \R \setminus \Q$, let $X$ be a real analytic manifold, and
let $v_\lambda$, $\lambda \in X$, be a real analytic family of real
analytic potentials.  Then the Lyapunov exponent is a $C^\omega$-stratified
function of both $\lambda$ and $E$.

\end{thm}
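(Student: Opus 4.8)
The plan is to run, in the parameter space $X\times\R\ni(\lambda,E)$, the same argument that establishes Theorem~\ref{e}, exploiting that the cocycles $A_{\lambda,E}(x)=\left(\bm E-v_\lambda(x) & -1 \\ 1 & 0 \em\right)$ form a real analytic family in $(\lambda,E,x)$ and hence extend holomorphically to a neighbourhood of $X\times\R$ inside a complexification (locally $X$ is an open subset of some $\R^d$, so this is harmless). For $(\lambda,E)\in X\times\R$ put $g_{\lambda,E}(\epsilon)=L(\alpha,A_{\lambda,E}(\,\cdot+i\epsilon))$. This function is convex in $\epsilon$, it is even (because $v_\lambda$ and $E$ are real), and by the quantization theorem for perturbations in the imaginary direction it is piecewise affine with slopes in $2\pi\Z$; let $\omega(\lambda,E)=\frac{1}{2\pi}g_{\lambda,E}'(0^+)\in\Z_{\ge 0}$ be the acceleration. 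It is finite everywhere, and, since $g_{\lambda,E}(\epsilon)$ is jointly continuous and $\omega(\lambda,E)=\frac{1}{2\pi}\inf_{\epsilon>0}\epsilon^{-1}\big(g_{\lambda,E}(\epsilon)-g_{\lambda,E}(0)\big)$, it is upper semicontinuous in $(\lambda,E)$.

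Set $\cS_0=X\times\R$ and $\cS_1=\{(\lambda,E):E\in\Sigma_{\alpha,v_\lambda}\}$; its complement is the set where $(\alpha,A_{\lambda,E})$ is uniformly hyperbolic, which is open, so $\cS_1$ is closed, and on $\cS_0\setminus\cS_1$ the Lyapunov exponent is real analytic, uniform hyperbolicity supplying a holomorphically varying dominated splitting and hence a pluriharmonic expression for $L$. For $k\ge 1$ put $\cS_{k+1}=\cS_1\cap\{\omega\ge k\}$, closed by upper semicontinuity of $\omega$; deleting repetitions from $\cS_0\supseteq\cS_1\supseteq\cS_2\supseteq\cdots$ gives a strictly decreasing (finite or countable) sequence of closed sets with empty intersection, since $\omega$ is finite everywhere, i.e. a stratification. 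On the stratum $\cS_1\setminus\cS_2=\{(\lambda,E):E\in\Sigma_{\alpha,v_\lambda},\ \omega(\lambda,E)=0\}$ one has $L\equiv 0$: if $L(\lambda,E)>0$ while $\omega(\lambda,E)=0$, then piecewise affineness and evenness of $g_{\lambda,E}$ force it to be constant near $\epsilon=0$, so the cocycle is regular, hence uniformly hyperbolic by the regularity theorem, hence $E\notin\Sigma_{\alpha,v_\lambda}$ — contradiction. So $L$ restricted to this stratum is analytic, and it remains to handle, for each $j\ge 1$, the stratum $\{\omega=j\}$ — the heart of the matter.

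Fix $(\lambda_0,E_0)$ with $\omega(\lambda_0,E_0)=j\ge 1$. Since $g_{\lambda_0,E_0}$ is convex and piecewise affine with $g_{\lambda_0,E_0}'(0^+)=2\pi j$, choose $\epsilon_*>0$ in the interior of its first affine piece. Then $(\alpha,A_{\lambda_0,E_0}(\,\cdot+i\epsilon_*))$ is regular and has Lyapunov exponent $g_{\lambda_0,E_0}(0)+2\pi j\epsilon_*>0$, so by the regularity theorem — in its $\SL(2,\C)$ form, since this shifted cocycle is genuinely $\SL(2,\C)$-valued — it is uniformly hyperbolic. Uniform hyperbolicity is open and survives complexification of $(\lambda,E)$, so there is a connected neighbourhood $W$ of $(\lambda_0,E_0)$ in a complexification of $X\times\R$ on which $(\alpha,A_{\lambda,E}(\,\cdot+i\epsilon_*))$ is uniformly hyperbolic; hence $(\lambda,E)\mapsto L(\alpha,A_{\lambda,E}(\,\cdot+i\epsilon_*))$ is pluriharmonic on $W$, in particular real analytic at the real points of $W$, and its $\epsilon$-acceleration — an integer-valued harmonic function on $W$ — stays equal to $j$, i.e. $g_{\lambda,E}'(\epsilon_*)=2\pi j$ there. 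If, in addition, $(\lambda,E)$ is a real point of $W$ lying on the stratum $\{\omega=j\}$, then $g_{\lambda,E}$ is convex with slope $2\pi j$ both at $0^+$ and at $\epsilon_*$, so, the slope being non-decreasing, it is identically $2\pi j$ on $[0,\epsilon_*]$; therefore
\be
L(\lambda,E)=g_{\lambda,E}(0)=g_{\lambda,E}(\epsilon_*)-2\pi j\epsilon_*=L\big(\alpha,A_{\lambda,E}(\,\cdot+i\epsilon_*)\big)-2\pi j\epsilon_*
\ee
throughout the part of the stratum inside $W$. The right-hand side extends to a real analytic function on the whole of $W$, so the restriction of $L$ to the stratum $\{\omega=j\}$ is $C^\omega$, as required.

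The entire difficulty is carried by the two dynamical inputs invoked above — quantization of the acceleration, and the equivalence, when $L>0$, between regularity of the dependence on imaginary translations and uniform hyperbolicity — which are the technical core of the paper; everything else is bookkeeping. The one point worth flagging is structural rather than technical: the affine identity $g_{\lambda,E}(\epsilon_*)-g_{\lambda,E}(0)=2\pi j\epsilon_*$ holds only along the stratum $\{\omega=j\}$ and breaks down on any neighbourhood of it — off the stratum the acceleration drops and $g_{\lambda,E}$ acquires new corners between $0$ and $\epsilon_*$ — and it is exactly this matching of nice functions along the complicated set $\{\omega=j\}$ that yields merely stratified, rather than genuine, analyticity. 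Passing from Theorem~\ref{e} to Theorem~\ref{v} adds nothing beyond carrying the auxiliary parameters $\lambda$ through the argument.
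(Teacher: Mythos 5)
Your proposal is correct and follows essentially the same route as the paper: the same stratification by acceleration (after identifying the complement of the spectrum with the uniformly hyperbolic locus), the same use of quantization to make the stratification well-defined and of Theorem~\ref{uniformly hyperbolic} to kill the Lyapunov exponent on the zero-acceleration stratum, and the same device of shifting into the complex band by $\epsilon_*$ to produce a locally defined analytic extension of $L$ on each higher stratum. The only presentational difference is that the paper packages this last step once and for all as the functions $L_{\delta,j}$ on the open sets $\Omega_{\delta,j}$ (Proposition~\ref{pluri}), whereas you construct the extension locally near each stratum point; the content is identical. One tiny quibble: to conclude that the acceleration at level $\epsilon_*$ is locally constant on $W$ you only need that it is integer-valued and continuous (since $L$ is $C^\infty$ on $\UH$) on the connected set $W$ -- calling it ``harmonic'' is unnecessary and slightly misleading, but the conclusion is right.
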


It is quite clear how this result opens the doors for the analysis of the
boundary of non-uniform hyperbolicity, since parameters
corresponding to the vanishing of the Lyapunov exponent are contained in the
set of solutions of equations (in infinitely many variables)
with analytic coefficients.  Of course, one
still has to analyze the nature of the equations one gets, guaranteeing the
non-vanishing of the coefficients.  Indeed, in the subcritical regime,
the coefficients do vanish.  We will work out suitable expressions for the
Lyapunov exponent restricted to strata which will allow us to show
non-vanishing outside the subcritical regime.

In the case of the almost Mathieu operator, there is no dependence of the
Lyapunov exponent on the frequency parameter.  In general,
Bourgain-Jitomirskaya proved the Lyapunov exponent is a continuous function
of $\alpha \in \R \setminus \Q$.
This is a very subtle result, as the continuity is not in general uniform in
$\alpha$.  We will show that the Lyapunov exponent is also
$C^\infty$-stratified as a function of $\alpha \in \R \setminus \Q$.

\begin{thm} \label {frequen}

Let $X$ be a real analytic manifold, and let $v_\lambda$, $\lambda \in X$,
be a real analytic family of real analytic potentials.  Then the Lyapunov
exponent is a $C^\infty$-stratified function of $(\alpha,\lambda,E) \in (\R
\setminus \Q) \times X \times \R$.

\end{thm}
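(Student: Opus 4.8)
The plan is to bootstrap from Theorem~\ref{v} (stratified analyticity in the potential and energy) and upgrade the analyticity in $(\lambda,E)$ to a joint statement that also incorporates $\alpha$, at the cost of losing analyticity in the $\alpha$ direction and only retaining $C^\infty$. The natural mechanism is that for a fixed \emph{rational} frequency $p/q$ the Lyapunov exponent is given by an explicit finite formula (an average over the torus of $\frac{1}{q}\ln\|A_q\|$ with $A_q$ a periodic word, whose spectral radius is governed by $|\tr A_q|$), and one wants to run a version of the energy/potential analysis with $q$ large and $p/q \to \alpha$. Concretely, I would first recall the periodic theory: for $\alpha = p/q$, $L(E) = \frac{1}{2\pi q}\int_0^{2\pi} \ln \rho(A_q(x,E,\lambda))\,dx$ where $\rho$ is the larger eigenvalue modulus, and the stratification in $(\lambda,E)$ at rational frequency is governed by the loci where $|\tr A_q| = 2$ or where the band structure degenerates; these are analytic conditions. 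Then the key point is a \emph{uniform} control, as $\alpha$ varies over a real interval (Diophantine or not), of how the rational-frequency stratified pictures glue together with the irrational one via the continuity result of Bourgain--Jitomirskaya.

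The core technical step, and the one I expect to be the main obstacle, is to produce the $C^\infty$ (rather than merely continuous or $C^\omega$) dependence on $\alpha$ on each stratum. I anticipate this comes from the ``quantization'' phenomenon advertised in the abstract: the acceleration (the right derivative in the imaginary direction of $E$ or of the complexified cocycle) takes values in a lattice $\frac{1}{q}\Z$ at rational frequency, and the analysis of strata identifies $L$ restricted to a stratum with an expression of the form $\int \omega_k$ where $\omega_k$ is built from a fixed number of ``resonance'' branches. The frequency $\alpha$ enters these expressions through rotation-number-type quantities and through the location of resonances, which depend on $\alpha$ only through smooth (indeed real-analytic in the periodic case, but only $C^\infty$ after passing to the limit / after a Fourier-mode truncation argument) data. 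Making this precise requires: (i) showing the relevant strata are defined by equations whose coefficients depend smoothly on $\alpha$; (ii) showing the defining submersions remain submersions uniformly; and (iii) controlling the matching of strata coming from nearby rationals. The obstruction to analyticity in $\alpha$ is exactly the non-uniformity of continuity noted in the text (small denominators), so one must settle for $C^\infty$ and the argument for that is where the subtlety lies --- presumably via an inductive scheme producing, for each $r$, a stratification on which $L$ is $C^r$, then intersecting (but a stratification is required to be finite or countable, so one needs the depth to stabilize, which should follow from an a priori bound on the number of strata in terms of the complexity of $v_\lambda$).

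After those ingredients are in place, the assembly is routine: work locally near a point $(\alpha_0,\lambda_0,E_0)$; if $\alpha_0$ is such that $E_0$ is non-critical and the cocycle is uniformly hyperbolic or the Lyapunov exponent is zero throughout a neighborhood in the spectrum, then $L$ is already smooth there by the local theory, so the interesting strata are the lower ones; on each of these, invoke the expression for $L|_{X_i \setminus X_{i+1}}$ derived in the proof of Theorems~\ref{e} and~\ref{v}, now with $\alpha$ as an extra smooth parameter, and read off $C^\infty$ dependence. One then defines $X_i \subset (\R\setminus\Q)\times X\times\R$ as the appropriate closed sets (closedness in the relative topology of $\R\setminus\Q$, which is fine since $\alpha$ ranges over $\R\setminus\Q$), checks $\bigcap X_i = \emptyset$ using that the depth of the stratification is bounded on compact $(\lambda,E)$-sets uniformly in $\alpha$ (again the quantization bound), and concludes. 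The honest summary is: Theorem~\ref{frequen} is Theorem~\ref{v} plus a dimensional-analysis/quantization argument showing the $\alpha$-dependence of every stratum-defining datum is smooth; the loss from $C^\omega$ to $C^\infty$ is forced and traceable to small divisors, and checking smoothness (not just continuity) uniformly across the rational approximants is the hard part.
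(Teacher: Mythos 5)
Your proposal misidentifies the mechanism. The paper's proof of Theorem~\ref{frequen} is the same one-paragraph argument used for Theorems~\ref{e} and~\ref{v}, and it does \emph{not} involve rational approximation, uniform control over $p/q$, or any inductive $C^r$ scheme at all. The stratification is defined directly on $(\R\setminus\Q)\times X\times\R$ by the value of the acceleration: $\X_1$ is the non-uniformly-hyperbolic locus, and $\X_j$ (for $j\geq2$) is the locus where $\omega(\alpha,A^{(E-v_\lambda)})\geq j-1$. These are closed (in the ambient relative topology) because $\UH$ is open and the acceleration is upper semicontinuous; the depth is finite because the acceleration is bounded a priori (quantization plus the convexity bound when $A$ has a bounded holomorphic extension), not because some inductive scheme stabilizes.

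The real content, which you are missing, is Proposition~\ref{pluri}: on the $j$-th stratum, $L(\alpha,A)=L_{\delta,j}(\alpha,A)$, where $L_{\delta,j}(\alpha,A):=L(\alpha,A_{\delta'})-2\pi j\delta'$ and $\delta'>0$ is chosen so small that $(\alpha,A_{\delta'})$ is \emph{uniformly hyperbolic}. This is where Corollary~\ref{alter} enters. Once you are inside $\UH$, the $C^\infty$ dependence of $L$ on \emph{both} $\alpha$ and $A$ jointly is classical (normally hyperbolic theory \cite{HPS}, giving smooth dependence of the invariant splitting on all parameters), and the pluriharmonic dependence on $A$ is what upgrades this to $C^\omega$ in the $(\lambda,E)$ directions. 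In other words, the reason $\alpha$-regularity is only $C^\infty$ and not $C^\omega$ has nothing to do with small divisors or with non-uniformity of rational approximation; it is simply that normally hyperbolic theory delivers $C^\infty$ in the base rotation while complex-analyticity is only available in the $A$ variable. Your appeal to "the non-uniformity of continuity noted in the text" as the cause of the loss of analyticity, and your proposed strategy of matching rational-frequency stratifications across nearby $p/q$, do not connect to the actual argument: the complexification to $\Im z=\delta'$ steps out of the delicate region entirely, so no quantitative control in $\alpha$ on approximants is ever needed. (Also, a side error: quantization gives integer acceleration at irrational frequencies, not $\tfrac{1}{q}\Z$ at rationals; indeed Remark~\ref{rational} explains that quantization fails at rationals.)

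So: same stratification in spirit, but your engine (periodic approximation with uniform estimates and an inductive $C^r$ scheme) is not how the regularity is obtained, and attempting it would run into exactly the small-divisor difficulties the paper's argument is designed to avoid. The fix is to route the proof through $\UH$ in the complexified band via Proposition~\ref{pluri}.
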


With $v$ as in the Example Theorem, the Lyapunov exponent is actually
$C^\infty$ as a
function of $\alpha$ and $E$ in the spectrum.

\subsection{Lyapunov exponents of $\SL(2,\C)$ cocycles}

In the dynamical systems approach, which we follow here,
the understanding of the Schr\"odinger operator is obtained through the
detailed description of a certain family of dynamical systems.

A (one-frequency, analytic) quasiperiodic $\SL(2,\C)$ cocycle is a
pair $(\alpha,A)$, where $\alpha \in \R$ and
$A:\R/\Z \to \SL(2,\C)$ is
analytic, understood as defining a linear skew-product acting on $\R/\Z
\times \C^2$ by $(x,w) \mapsto (x+\alpha,A(x) \cdot w)$.  The iterates of
the cocycle have the form $(n\alpha,A_n)$ where $A_n$ is given by (\ref
{2}).
The Lyapunov exponent $L(\alpha,A)$ of the cocycle $(\alpha,A)$ is given by
the left hand side of (\ref {3}).
We say that $(\alpha,A)$ is {\it uniformly hyperbolic} if there exist
analytic functions $s:\R/\Z \to \P\C^2$, called the unstable and stable
directions, and $n \geq 1$ such that for every $x \in \R/\Z$, $A(x) \cdot
u(x)=u(x+\alpha)$ and $A(x) \cdot s(x)=s(x+\alpha)$, and for every unit
vector $w \in s(x)$ we have $\|A_n(x) \cdot w\|<1$ and $\|A_n(x) \cdot
w\|>1$ (clearly $u(x) \neq s(x)$ for every $x \in \R/\Z$).
The unstable and stable directions are uniquely caracterized by
those properties, and clearly $u(x) \neq s(x)$ for every $x \in \R/\Z$.
It is clear that if $(\alpha,A)$ is uniformly hyperbolic
then $L(\alpha,A)>0$.

If $L(\alpha,A)>0$ but $(\alpha,A)$ is not uniformly
hyperbolic, we will say that $(\alpha,A)$ is {\it nonuniformly hyperbolic}.

Uniform hyperbolicity is a stable property: the set $\UH \subset \R \times
C^\omega(\R/\Z,\SL(2,\C))$ of uniformly hyperbolic cocycles is open. 
Moreover, it implies good behavior of the Lyapunov exponent: the restriction
of $(\alpha,A) \mapsto L(\alpha,A)$ to $\UH$ is a $C^\infty$ function of
both variables,\footnote {Since $\UH$ is not a Banach manifold, it might
seem important to be precise about what notion of smoothness is used here. 
This issue can be avoided by enlarging the setting to include $C^\infty$
non-analytic cocycles (say by considering a Gevrey condition), so that we
end up with a Banach manifold.  The smoothness of the Lyapunov exponent in
this context is a consequence of normally hyperbolic theory \cite {HPS}.}
and it is a pluriharmonic function of the second variable.
\footnote {This means that, in addition to being continuous, given
any family $\lambda
\mapsto A^{(\lambda)} \in \UH$, $\lambda \in \D$,
which holomorphic (in the sense that it is continuous and for every
$x \in \R/\Z$ the map $\lambda
\mapsto A^{(\lambda)}(x)$ is holomorphic), the map $\lambda \mapsto
L(\alpha,A^{(\lambda)})$ is harmonic.}  In fact regularity properties of the
Lyapunov exponent are consequence of the
regularity of the unstable and stable directions, which depend smoothly on
both variables (by normally hyperbolic theory \cite {HPS})
and holomorphically on the second variable (by a simple normality argument).

On the other hand, a
variation \cite {JKS} of \cite {BJ1}
gives that $(\alpha,A) \mapsto L(\alpha,A)$ is continuous as a function on
$(\R \setminus \Q) \times C^\omega(\R/\Z,\SL(2,\C))$.  It is important to
notice (and in fact, fundamental in what follows)
that the Lyapunov exponent is
not continuous on $\R \times C^\omega(\R/\Z,\SL(2,\C))$.
In the remaining of this section,
we will restrict our attention (except otherwise noted) to cocycles with
irrational frequencies.

Most important examples
are Schr\"odinger cocycles $A^{(v)}$, determined by a real analytic function
$v$ by $A^{(v)}=\left (\bm v & -1 \\ 1 &
0 \em \right )$.  In this notation, the Lyapunov exponent at energy $E$ for the
operator $H_{\alpha,v}$ becomes $L(E)=L(\alpha,A^{(E-v)})$.
One of the most basic aspects of the connection between spectral and
dynamical properties is that
$E \notin \Sigma_{\alpha,v}$
if and only if $(\alpha,A^{(E-v)})$ is uniformly hyperbolic.
Thus the analyticity of $E \mapsto L(E)$ outside of the spectrum just
translates a general property of uniformly hyperbolic cocycles.

If $A \in C^\omega(\R/\Z,\SL(2,\C))$ admits a holomorphic extension to $|\Im
z|<\delta$, then for $|\epsilon|<\delta$
we can define $A_\epsilon \in C^\omega(\R/\Z,\SL(2,\C))$ by
$A_\epsilon(x)=A(x+i
\epsilon)$.  The Lyapunov exponent $L(\alpha,A_\epsilon)$ is easily seen to
be a convex function of $\epsilon$.  Thus we can define a function
\be
\omega(\alpha,A)=\lim_{\epsilon \to 0+} \frac {1} {2 \pi \epsilon}
(L(\alpha,A_\epsilon)-L(\alpha,A)),
\ee
called the {\it acceleration}.
It follows from convexity and continuity of the Lyapunov exponent
that the acceleration is an upper semi-continuous
function in $(\R \setminus \Q) \times C^\omega(\R/\Z,\SL(2,\C))$.

Our starting point is the following result.

\begin{thm}[Acceleration is quantizatized] \label {quantized}

The acceleration of $\SL(2,\C)$ cocycles with irrational frequency
is always an integer.

\end{thm}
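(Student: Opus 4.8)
The plan is to reduce the quantization statement to a winding-number computation for uniformly hyperbolic cocycles, and to use convexity to spread the integrality around. Fix $(\alpha,A)$ with $\alpha$ irrational and $A$ holomorphic on $|\Im z|<\delta$, and consider $\phi(t):=L(\alpha,A_t)$ for $|t|<\delta$, where $A_t(x)=A(x+it)$. As noted in the text, $\phi$ is convex, and by the Bourgain--Jitomirskaya continuity theorem it is continuous; moreover $\omega(\alpha,A)=\tfrac{1}{2\pi}\phi'_+(0)$. A convex function on an interval whose derivative takes values in the discrete set $2\pi\Z$ at Lebesgue-a.e.\ point is affine on each of countably many subintervals with slopes in $2\pi\Z$, hence has all one-sided derivatives in $2\pi\Z$. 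So it suffices to show that $\phi'(t)\in 2\pi\Z$ at every point of differentiability, or even just for $t$ in a dense set.

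The engine is the uniformly hyperbolic case. Suppose $A_{t_0}\in\UH$. Since $\UH$ is open and the unstable direction of a uniformly hyperbolic cocycle depends holomorphically on the second variable (the normality argument recalled in the text), for $|t-t_0|<\epsilon_0$ the unstable directions $u_t(x)\in\P\C^2$ assemble into a holomorphic, $\Z$-periodic section of a $\P\C^2$-bundle over the annulus $\{|\Im z-t_0|<\epsilon_0\}/\Z$; as the associated line bundle is holomorphically trivial over an annulus, it lifts to a nowhere-vanishing holomorphic $\Z$-periodic section $\xi$, and then $A(z)\xi(z)=\psi(z)\xi(z+\alpha)$ for a holomorphic $\Z$-periodic $\psi:\{|\Im z-t_0|<\epsilon_0\}\to\C^*$. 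Iterating in the paper's convention, $A_n(z)\xi(z)=\big(\prod_{j=0}^{n-1}\psi(z+j\alpha)\big)\xi(z+n\alpha)$; since $\xi$ is bounded above and below on each horizontal line and lies in the dominant direction, one gets $L(\alpha,A_t)=\int_0^1\log|\psi(x+it)|\,dx$ for $|t-t_0|<\epsilon_0$ (Birkhoff averaging together with uniform hyperbolicity). Differentiating under the integral, which is legitimate because $\psi$ is non-vanishing, gives $\phi'(t)=\Re\big(i\int_0^1(\psi'/\psi)(x+it)\,dx\big)=-2\pi\,\mathrm{wind}(\psi)$, where $\mathrm{wind}(\psi)\in\Z$ is the winding number of $\psi$ around $0$ along a horizontal circle --- an integer, independent of $t$ because $\psi$ never vanishes. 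Hence $\phi$ is affine with a slope in $2\pi\Z$ near $t_0$, and by convexity it is affine with a fixed integer slope on each connected component of $\mathcal U:=\{t:A_t\in\UH\}$.

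It remains to show that the set of $t$ with $\phi(t)>0$ but $A_t\notin\UH$ (i.e.\ $A_t$ nonuniformly hyperbolic) is negligible: if it has empty interior, then $\phi$ is affine with integer slope on each component of an open dense set, so by continuity it is globally piecewise linear with slopes in $2\pi\Z$, which proves the theorem. The cleanest way is to invoke the companion principle advertised in the introduction --- that ``regularity'' of $t\mapsto L(\alpha,A_t)$ near $t_0$ together with $L(\alpha,A_{t_0})>0$ forces $A_{t_0}\in\UH$: then at every bad $t_0$ the convex function $\phi$ must have a genuine corner, $\phi'_-(t_0)<\phi'_+(t_0)$, and a convex function has at most countably many corners, so the bad set is countable. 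If one wishes to keep this step self-contained, one argues instead by upper semicontinuity of $\omega$ and continuity of $L$: near the edge of the band $A_t$ is dominated by its extreme Fourier mode and so is uniformly hyperbolic with $\phi$ affine of slope $2\pi d$, $d$ the degree of that mode; perturbing $A$ so that its complexification becomes uniformly hyperbolic off a discrete set of heights and passing to the limit pushes the integrality down to $t=0$. Either way, this control of the nonuniformly hyperbolic locus in the imaginary direction is where the real work lies; the uniformly hyperbolic winding-number computation, although the conceptual core, is not by itself enough.
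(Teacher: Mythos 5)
Your winding-number computation in the uniformly hyperbolic regime is correct and is, in fact, exactly what the paper observes in a footnote to Section~3: once one has a holomorphic invariant splitting, quantization is immediate, the integer being the winding of the diagonal entry. But the hard content of the theorem is precisely what you defer to the last paragraph, and neither resolution you offer there closes. The ``cleanest way'' is not available: you are invoking Theorem~\ref{uniformly hyperbolic}, whose statement is only formulated in the paper \emph{after} quantization (the notion of ``regular'' is introduced using the piecewise-affine structure that quantization provides), and whose proof (Section~3, via Lemma~\ref{per}) runs on the very same periodic-approximation/Fourier engine that the paper uses to prove quantization in the first place --- indeed the displayed estimates in Lemma~\ref{per} essentially re-derive that the affine slope is an integer multiple of $2\pi$. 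Worse, even granting that theorem, your countability argument is wrong: it tells you that at a bad $t_0$ the convex function $\phi$ is \emph{not locally affine}, which is strictly weaker than having a corner $\phi'_-(t_0)<\phi'_+(t_0)$. A convex function can fail to be locally affine on an uncountable set of positive measure without any corners (take $\phi'$ to be a Cantor--Lebesgue staircase), in which case your ``affine with integer slopes on an open dense set'' conclusion does not propagate to all of the interval. The self-contained alternative (``perturbing $A$ so that its complexification becomes uniformly hyperbolic off a discrete set of heights'') is not an argument; it is the statement of the difficulty.

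The paper's actual route sidesteps the nonuniformly hyperbolic locus entirely by periodic approximation. Fix $p_n/q_n\to\alpha$ with $p_n,q_n$ coprime. The key algebraic fact is that $\tr A_{(p_n/q_n)}(x)$ is $1/q_n$-periodic, so its Fourier support lies in $q_n\Z$; combined with the a priori bound $|a_{k,n}|\le 2C^{q_n}e^{-2\pi|k|q_n\epsilon}$ on a band $|\Im z|<\epsilon$, only $O(1)$ modes survive after normalizing by $q_n$. Since $\frac1q\ln\rho$ is well approximated by $\frac1q\ln|\tr|$ when the latter is large, one gets
\[
L(p_n/q_n,A_\delta)=\max_{|k|\le k_0}\max\{\ln|a_{k,n}|-2\pi k\delta,\,0\}+o(1),\qquad|\delta|<\epsilon',
\]
a convex piecewise-linear function with slopes in $2\pi\{-k_0,\dots,k_0\}$ up to $o(1)$. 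The Jitomirskaya--Koslover--Schulteis continuity theorem then gives uniform convergence of these functions to $\delta\mapsto L(\alpha,A_\delta)$ on compacts, forcing the limit to be piecewise affine with slopes in $2\pi\Z$. This is a genuinely different mechanism from your holomorphic-section argument, and the reason it works is that for rational frequency the quantization is an algebraic feature (the sparsity of the Fourier support of the trace), requiring no hyperbolicity assumption at all; you should look for how that sparsity replaces the need to control the NUH locus.
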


\begin{rem}

It is easy to see that quantization does not extend to rational
frequencies, see Remark \ref {rational}.

\end{rem}

This result allows us to break parameter spaces
into suitable pieces restricted to which we can study the dependence
of the Lyapunov exponent.

Quantization implies that
$\epsilon \mapsto L(\alpha,A_\epsilon)$ is a piecewise
affine function of $\epsilon$.  Knowing this, it makes sense to introduce
the following:

\begin{defn}

We say that
$(\alpha,A) \in (\R \setminus \Q) \times C^\omega(\R/\Z,\SL(2,\C))$ is
{\it regular} if $L(\alpha,A_\epsilon)$ is affine for
$\epsilon$ in a neighborhood of $0$.

\end{defn}

\begin{rem} \label {omega real}

If $A$ takes values in $\SL(2,\R)$ then $\epsilon \mapsto
L(\alpha,A_\epsilon)$ is an even function.  By convexity,
$\omega(\alpha,A) \geq 0$, and if $\alpha \in \R \setminus \Q$ then
$(\alpha,A)$ is regular if and only if $\omega(\alpha,A)=0$.

\end{rem}

Clearly regularity is an
open condition in $(\R \setminus \Q) \times C^\omega(\R/\Z,\SL(2,\C))$.

It is natural to assume that regularity has important
consequences for the dynamics.  Indeed, we have been able to completely
characterize the dynamics
of regular cocycles with positive Lyapunov exponent, which is the other
cornerstone of this paper.

\begin{thm} \label {uniformly hyperbolic}

Assume that $L(\alpha,A)>0$.  Then $(\alpha,A)$ is regular if and only if
$(\alpha,A)$ is uniformly hyperbolic.

\end{thm}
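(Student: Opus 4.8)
The plan is to prove the two implications by completely different means: one is soft and immediate from the structure recalled above, the other carries all the content.

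\emph{Uniform hyperbolicity implies regularity.} Since $\UH$ is open and $\zeta\mapsto A_\zeta$ is continuous into $C^\omega(\R/\Z,\SL(2,\C))$ — one may, for instance, simply translate the analytic stable and unstable directions — the cocycle $(\alpha,A_\zeta)$ stays uniformly hyperbolic for $\zeta$ in a disk $\{|\zeta|<\rho\}$. On $\UH$ the Lyapunov exponent is pluriharmonic in the cocycle, so $\zeta\mapsto L(\alpha,A_\zeta)$ is harmonic there; but it depends only on $\Im\zeta$, since a real translation of the variable does not change the Lyapunov exponent, and a harmonic function of $\zeta=s+i\epsilon$ independent of $s$ is affine in $\epsilon$. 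Hence $\epsilon\mapsto L(\alpha,A_\epsilon)$ is affine near $0$, i.e. $(\alpha,A)$ is regular (in the $\SL(2,\R)$ case this is just $\omega(\alpha,A)=0$, by the Remark on evenness).

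\emph{Regularity together with $L(\alpha,A)>0$ implies uniform hyperbolicity.} I would first fix the criterion to be verified: it is standard that an $\SL(2,\C)$ cocycle over an irrational rotation is uniformly hyperbolic as soon as $\|A_n(x)\|$ grows uniformly exponentially in $n$, uniformly in $x\in\R/\Z$ (equivalently, as soon as the a.e.-defined unstable Oseledets section $\R/\Z\to\P\C^2$ has a continuous — hence analytic — representative; the complementary stable section is then automatically analytic and everywhere distinct, giving a dominated splitting). By the subadditive ergodic theorem $\tfrac1n\ln\|A_n(x)\|\to L(\alpha,A)>0$ for a.e. $x$, so the whole problem is to upgrade this to an estimate uniform in $x$, and regularity is the mechanism. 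Fix $\delta_0>0$ with $\epsilon\mapsto L(\alpha,A_\epsilon)$ affine on $(-\delta_0,\delta_0)$; by Theorem \ref{quantized} its slope is $2\pi\,\omega(\alpha,A)$ with $\omega(\alpha,A)\in\Z$. Each $\zeta\mapsto\ln\|A_n(\zeta)\|$ is nonnegative (as $\|M\|\ge1$ for $M\in\SL(2,\C)$) and subharmonic on $\{|\Im\zeta|<\delta\}$, with circle averages $u_n(\epsilon)=\tfrac1n\int_{\R/\Z}\ln\|A_n(x+i\epsilon)\|\,dx$ that are convex in $\epsilon$ and decrease to $L(\alpha,A_\epsilon)$. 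The measures $u_n''$ are, up to a constant, the pushforwards under $\Im$ of the Riesz masses $\tfrac1n\Delta\ln\|A_n\|$; since convex functions converging pointwise have weakly-$*$ convergent second derivatives, affineness of the limit on $(-\delta_0,\delta_0)$ forces the Riesz mass of $\tfrac1n\ln\|A_n\|$ over $\{|\Im\zeta|\le\delta_0'\}$ to tend to $0$ for every $\delta_0'<\delta_0$. Writing the Riesz decomposition $\tfrac1n\ln\|A_n\|=h_n-p_n$ on $\{|\Im\zeta|<\delta_0'\}$, the harmonic parts $h_n$ are nonnegative with circle averages tending to $L(\alpha,A)>0$, so Harnack's inequality across the circle $\{\Im\zeta=0\}$ gives $\inf_x h_n(x)\ge cL(\alpha,A)+o(1)$; controlling the vanishing-mass potentials $p_n$ then yields a uniform exponential lower bound for $\|A_n(x)\|$, and hence $(\alpha,A)\in\UH$. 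In more dynamical language: vanishing Riesz mass means the norm growth of $A_n(x+i\epsilon)$ is witnessed by a holomorphically varying dominant direction, whose boundary value at $\Im\zeta=0$ is a continuous invariant section — the unstable direction of $(\alpha,A)$ — whereas a nonzero Riesz mass at height $0$, i.e. a corner of $L$ there, is precisely the collision of invariant sections that obstructs this.

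\emph{Main obstacle.} The delicate point is the passage from ``the Riesz mass of $\tfrac1n\ln\|A_n\|$ near the circle $\{\Im\zeta=0\}$ tends to $0$'' to a genuinely uniform exponential lower bound for $\|A_n(x)\|$, for all $x$ and all large $n$: for fixed $n$ the function $\tfrac1n\ln\|A_n\|$ is not harmonic, and one must prevent the small residual Riesz mass from concentrating on that circle while controlling all error terms uniformly in $x$ down to height $0$. This is exactly where both hypotheses are indispensable — positivity of $L$ supplies the expansion and the contraction of projective iterates toward the dominant section, while regularity forbids the invariant sections from degenerating as $\epsilon\to0$ — and where Theorem \ref{quantized} is used, both to ensure that $\epsilon\mapsto L(\alpha,A_\epsilon)$ is piecewise affine (so ``affine near $0$'' is the right local hypothesis) and to give the integrality, hence local finiteness, of the Riesz-mass structure.
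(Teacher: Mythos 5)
Your ``uniform hyperbolicity $\Rightarrow$ regularity'' direction is fine and is essentially what the paper does (the paper just invokes smoothness of $L$ on $\UH$ plus quantization; your pluriharmonicity-plus-translation-invariance argument is an equivalent and equally quick way to see it).

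For the converse, you take a genuinely different route from the paper. The paper proves Lemma \ref{per} (periodic approximants $(p_n/q_n, A_\delta)$ become uniformly hyperbolic on a band, using the Fourier-series estimate from Section \ref{accel}), then derives an explicit formula for the derivative of $L$ in $\UH$ in terms of the invariant sections, uses pluriharmonicity of the periodic Lyapunov exponents together with a Harnack-type bound to control the coefficients $q_2^n, q_3^n$ of that derivative uniformly in $n$ on a band (Lemma \ref{gam}), deduces an angle bound between $u_n$ and $s_n$, and concludes by a normal-families argument. That is, regularity is converted into a \emph{derivative} bound which forces the periodic invariant sections not to collide; the sections are then extracted in the limit. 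By contrast, you propose to show uniform exponential growth of $\|A_n(x)\|$ directly, using vanishing of the Riesz mass of $\frac1n\ln\|A_n\|$ near $\{\Im\zeta=0\}$.

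The gap you flag as the ``main obstacle'' is genuine and is exactly where your argument stops. Knowing that $\mu_n(\{|\Im\zeta|<\delta_0'\})\to 0$ and that the harmonic parts $h_n$ are bounded below by Harnack does not control $p_n(x)$ pointwise on $\R/\Z$: a sequence of measures with total mass $\eps_n\to 0$ can concentrate within distance $e^{-1/\eps_n}$ of a real point and still produce potentials bounded away from $0$ there. Since $f_n=h_n-p_n\ge 0$ only gives $p_n\le h_n$, nothing in what you write prevents $f_n$ from being $o(1)$ at some real points, which is precisely what you need to exclude to get uniform exponential growth. You would need an additional argument ruling out such concentration (and this is exactly the kind of issue the paper sidesteps by constructing $u$ and $s$ directly rather than proving a pointwise lower bound on $\frac1n\ln\|A_n\|$). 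Also note that the equivalence ``uniform exponential growth $\Leftrightarrow$ uniform hyperbolicity for $\SL(2)$ cocycles over minimal dynamics,'' which you invoke as standard, is true but nontrivial; in the paper's proof it is unnecessary because $u$ and $s$ are produced explicitly. As written, your converse direction is an interesting but incomplete sketch.
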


One striking consequence is the following:

\begin{cor} \label {alter}

For any $(\alpha,A) \in (\R \setminus \Q) \times C^\omega(\R/\Z,\SL(2,\C))$,
there exists $\epsilon_0>0$ such that
\begin{enumerate}
\item $L(\alpha,A_\epsilon)=0$ (and $\omega(\alpha,A)=0$) for every
$0<\epsilon<\epsilon_0$, or
\item $(\alpha,A_\epsilon)$ is uniformly hyperbolic for every
$0<\epsilon<\epsilon_0$.
\end{enumerate}

\end{cor}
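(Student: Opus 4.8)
The plan is to extract the dichotomy directly from Theorem \ref{quantized} and Theorem \ref{uniformly hyperbolic}, using convexity only to control the behavior as $\epsilon\to0^+$.

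First I would fix a strip $|\Im z|<\delta$ to which $A$ extends holomorphically, so that $\epsilon\mapsto L(\alpha,A_\epsilon)$ is convex on $(-\delta,\delta)$, hence continuous there; in particular $L(\alpha,A_\epsilon)\to L(\alpha,A)$ as $\epsilon\to0^+$. By Theorem \ref{quantized} this convex function is piecewise affine, so it is affine on some interval $(0,\epsilon_0)$. Being continuous at $0$ with value $L(\alpha,A)$, and having right derivative $2\pi\,\omega(\alpha,A)$ at $0$ by the definition of the acceleration, it must satisfy
\[
L(\alpha,A_\epsilon)=L(\alpha,A)+2\pi\omega\,\epsilon\qquad\text{for }\epsilon\in[0,\epsilon_0),
\]
where $\omega:=\omega(\alpha,A)\in\Z$ (again by Theorem \ref{quantized}). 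Letting $\epsilon\to\epsilon_0^-$ and using $L\geq0$ also records the a priori bound $L(\alpha,A)+2\pi\omega\,\epsilon_0\geq0$.

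Next I would split into two exhaustive cases. If $L(\alpha,A)=0$ and $\omega=0$, then $L(\alpha,A_\epsilon)=0$ for every $\epsilon\in(0,\epsilon_0)$ and $\omega(\alpha,A)=0$, which is alternative (1). In every other case I claim $L(\alpha,A_\epsilon)>0$ for every $\epsilon\in(0,\epsilon_0)$: if $L(\alpha,A)>0$ and $\omega\geq0$ this is immediate from the displayed identity; if $L(\alpha,A)>0$ and $\omega<0$ then for $\epsilon<\epsilon_0$ we get $L(\alpha,A_\epsilon)=L(\alpha,A)+2\pi\omega\,\epsilon>L(\alpha,A)+2\pi\omega\,\epsilon_0\geq0$; and if $L(\alpha,A)=0$ then $\omega\neq0$ together with $L\geq0$ forces $\omega\geq1$, whence $L(\alpha,A_\epsilon)=2\pi\omega\,\epsilon>0$. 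Now fix any $\epsilon_1\in(0,\epsilon_0)$. Since $(A_{\epsilon_1})_\eta=A_{\epsilon_1+\eta}$, the function $\eta\mapsto L(\alpha,(A_{\epsilon_1})_\eta)$ is the restriction of the affine function above to the neighborhood $(-\epsilon_1,\epsilon_0-\epsilon_1)$ of $0$, so $(\alpha,A_{\epsilon_1})$ is regular; as moreover $L(\alpha,A_{\epsilon_1})>0$, Theorem \ref{uniformly hyperbolic} gives that $(\alpha,A_{\epsilon_1})$ is uniformly hyperbolic. This is alternative (2).

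I do not expect a real obstacle: essentially all the content sits in Theorems \ref{quantized} and \ref{uniformly hyperbolic}, and what remains is bookkeeping. The one point deserving attention is that the dichotomy is asserted on the whole interval $(0,\epsilon_0)$, not merely for small $\epsilon$; this is why one works with the precise identity $L(\alpha,A_\epsilon)=L(\alpha,A)+2\pi\omega\,\epsilon$ and the sign constraint $L\geq0$, rather than with continuity at $0$ alone. (Note also that one neither needs nor can claim $\omega(\alpha,A)=0$ in alternative (2): the diagonal $\SL(2,\C)$ cocycle $A(x)=\left(\bm \lambda e^{2\pi i x} & 0 \\ 0 & \lambda^{-1}e^{-2\pi i x}\em\right)$ with $\lambda>1$ is uniformly hyperbolic yet satisfies $L(\alpha,A_\epsilon)=\ln\lambda-2\pi\epsilon$ near $\epsilon=0$, so $\omega(\alpha,A)=-1$.)
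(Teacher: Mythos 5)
Your proof is correct and follows essentially the same route as the paper's: quantization gives a piecewise-affine $\epsilon\mapsto L(\alpha,A_\epsilon)$, hence affine (and so regular) on some $(0,\epsilon_0)$, and then non-negativity forces the dichotomy between $L\equiv 0$ and $L>0$ on that interval, with Theorem \ref{uniformly hyperbolic} concluding in the positive case. The paper phrases the dichotomy more compactly (a non-negative affine function on an open interval that is positive at one point is positive throughout), whereas you spell it out via a case analysis on the sign of $L(\alpha,A)$ and of $\omega$, but the content is the same.
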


\begin{proof}

Since $\epsilon \to L(\alpha,A_\epsilon)$ is piecewise affine, it must be
affine on $(0,\epsilon_0)$ for $\epsilon_0>0$ sufficiently small, so that
$(\alpha,A_\epsilon)$ is regular for every $0<\epsilon<\epsilon_0$.

Since the Lyapunov exponent is non-negative, if
$L(\alpha,A_\epsilon)>0$ for some $0<\epsilon<\epsilon_0$, then
$L(\alpha,A_\epsilon)>0$ for every $0<\epsilon<\epsilon_0$.  The result
follows from the previous theorem.
\end{proof}

As for the case of regular cocycles with zero Lyapunov exponent, this is the
topic of the Almost Reducibility Conjecture, which we will discuss in
section \ref {almost reducibility}.  For now, we will focus on the deduction of
regularity properties of
the Lyapunov exponent from Theorems \ref {quantized} and \ref {uniformly
hyperbolic}.

\subsubsection{Stratified regularity: proof of Theorems \ref {e}, \ref {v}
and \ref {frequen}}

For $\delta>0$, denote by
$C^\omega_\delta(\R/\Z,\SL(2,\C)) \subset
C^\omega(\R/\Z,\SL(2,\C))$ be the set of all $A$ which admit a bounded
holomorphic extension to $|\Im z|<\delta$, continuous up to the boundary. 
It is naturally endowed with a complex Banach manifold structure.

For $j \neq 0$, let
$\Omega_{\delta,j} \subset \R \times C^\omega_\delta(\R/\Z,\SL(2,\C))$
be the set of all $(\alpha,A)$ such that there exists $0<\delta'<\delta$
such that $(\alpha,A_{\delta'}) \in \UH$ and $\omega(\alpha,A_{\delta'})=j$,
and let $L_{\delta,j}:\Omega_{\delta,j} \to \R$ be given by
$L_{\delta,j}(\alpha,A)=L(\alpha,A_{\delta'})-2 \pi j \delta'$.  Since if
$0<\delta'<\delta''<\delta$,
$\omega(\alpha,A_{\delta'})=\omega(\alpha,A_{\delta''})=j$ implies that
$L(\alpha,A_{\delta'})=L(\alpha,A_{\delta''})-2 \pi j (\delta''-\delta')$,
we see that $L_{\delta,j}$ is well defined.

\begin{prop} \label {pluri}

$\Omega_{\delta,j}$ is open and $(\alpha,A) \mapsto L_{\delta,j}(\alpha,A)$
is a $C^\infty$ function, pluriharmonic in the second variable.  Moreover,
if $(\alpha,A) \in (\R \setminus \Q) \times
C^\omega_\delta(\R/\Z,\SL(2,\C))$ has acceleration $j$, then $(\alpha,A) \in
\Omega_{\delta,j}$ and $L(\alpha,A)=L_{\delta,j}(\alpha,A)$.

\end{prop}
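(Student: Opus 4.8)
The plan is to reduce everything to two inputs that are already available: the smoothness and pluriharmonicity of the Lyapunov exponent on the open set $\UH$, and the rigidity of the acceleration under imaginary translation \emph{inside} $\UH$. The defining condition of $\Omega_{\delta,j}$ involves a choice of $\delta'$, and the whole point is that this choice can be frozen locally.

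\textbf{Openness, smoothness, pluriharmonicity.} Fix $(\alpha_0,A_0)\in\Omega_{\delta,j}$ with a witness $\delta'_0\in(0,\delta)$, so $(\alpha_0,(A_0)_{\delta'_0})\in\UH$ and $\omega(\alpha_0,(A_0)_{\delta'_0})=j$. The first step is to record that the acceleration is \emph{locally constant} on $\UH$, for every frequency (rational or not). Indeed, if $(\alpha,A)\in\UH$ has unstable direction $u$, choose a holomorphic nonvanishing section $\hat u$ with $A(x)\hat u(x)=\lambda(x)\hat u(x+\alpha)$; then, for small $\epsilon$, $L(\alpha,A_\epsilon)=\int\log|\lambda(x+i\epsilon)|\,dx$, and differentiating in $\epsilon$ and invoking the argument principle shows that $\epsilon\mapsto L(\alpha,A_\epsilon)$ is affine with slope $-2\pi w$, where $w$ is the winding number of $\lambda$; thus $\omega(\alpha,A)=-w\in\Z$, and since $u$, hence $\lambda$, depends continuously on $(\alpha,A)\in\UH$ (normally hyperbolic theory), $w$ is locally constant. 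Now $(\alpha,A)\mapsto(\alpha,A_{\delta'_0})$ is continuous (holomorphic in the second variable) from a neighbourhood of $(\alpha_0,A_0)$ in $\R\times C^\omega_\delta$ into $\R\times C^\omega(\R/\Z,\SL(2,\C))$, and $\UH$ is open, so together with local constancy of $\omega$ there is a neighbourhood $U$ of $(\alpha_0,A_0)$ on which $(\alpha,A_{\delta'_0})\in\UH$ and $\omega(\alpha,A_{\delta'_0})=j$. Hence $U\subset\Omega_{\delta,j}$, which gives openness, and on $U$ we have $L_{\delta,j}(\alpha,A)=L(\alpha,A_{\delta'_0})-2\pi j\delta'_0$ (well-definedness being the computation already in the text). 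Since the restriction of $L$ to $\UH$ is $C^\infty$ in both variables and pluriharmonic in the second, and $A\mapsto A_{\delta'_0}$ is holomorphic, $L_{\delta,j}$ is $C^\infty$ and pluriharmonic in the second variable on $U$, hence globally on $\Omega_{\delta,j}$.

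\textbf{The ``moreover''.} Let $(\alpha,A)\in(\R\setminus\Q)\times C^\omega_\delta$ with $\omega(\alpha,A)=j$. By Theorem \ref{quantized}, the convex function $g(\epsilon):=L(\alpha,A_\epsilon)$ has slopes in $2\pi\Z$ everywhere, so it is piecewise affine; on the first affine piece $(0,\epsilon_1)$ to the right of $0$ it equals $L(\alpha,A)+2\pi j\epsilon$, and every $(\alpha,A_\epsilon)$ with $\epsilon\in(0,\epsilon_1)$ is regular, being in the interior of an affine piece. For $\epsilon>0$ small we have $L(\alpha,A_\epsilon)>0$: if $L(\alpha,A)>0$ this is clear by continuity, while if $L(\alpha,A)=0$ then nonnegativity of the Lyapunov exponent forces $j>0$, whence $L(\alpha,A_\epsilon)=2\pi j\epsilon>0$. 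By Theorem \ref{uniformly hyperbolic}, such $(\alpha,A_\epsilon)$ are uniformly hyperbolic; fixing one such $\delta'=\epsilon\in(0,\min(\epsilon_1,\delta))$ we get $(\alpha,A_{\delta'})\in\UH$ and, reading off the slope of $g$ on $(0,\epsilon_1)$, $\omega(\alpha,A_{\delta'})=j$. Thus $(\alpha,A)\in\Omega_{\delta,j}$ and $L_{\delta,j}(\alpha,A)=L(\alpha,A_{\delta'})-2\pi j\delta'=\bigl(L(\alpha,A)+2\pi j\delta'\bigr)-2\pi j\delta'=L(\alpha,A)$.

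\textbf{Main obstacle.} The delicate point is that inside $\UH$ the acceleration must not drift: this is exactly what allows $\delta'_0$ to be frozen under perturbation. For irrational frequency this would already follow from quantization (Theorem \ref{quantized}) plus continuity, but $\Omega_{\delta,j}$ is deliberately allowed to contain rational frequencies, where Theorem \ref{quantized} fails; the winding-number computation above is what rescues integrality—hence discreteness, hence local constancy—of the acceleration on all of $\UH$, and it is the step I would expect to need the most care to set up cleanly (one must check that the unstable direction, and therefore $\lambda$, extends holomorphically across the relevant strip). The only other thing to be careful about is the asymmetric effect of the shifts $A\mapsto A_{\delta'}$ on the strip of holomorphy, which is routine.
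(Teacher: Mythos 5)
Your proof is correct and takes essentially the same route as the paper's (which is only two sentences). The one piece of substance beyond ``openness of $\UH$ plus smoothness of $L$ on $\UH$'' is that the acceleration is integer-valued, hence locally constant, on all of $\UH$ including at rational frequencies; you prove this via the winding number of $\lambda$ obtained from a holomorphic section of the unstable direction, which is exactly the observation the paper records in a footnote in Section~\ref{reguh}, and it is indeed what lets the witness $\delta'_0$ be frozen. Your ``moreover'' argument — the first affine piece of the convex piecewise-affine function $\epsilon\mapsto L(\alpha,A_\epsilon)$ consists of regular cocycles, which by Theorem~\ref{uniformly hyperbolic} are uniformly hyperbolic once the exponent is positive — is just an unwinding of Corollary~\ref{alter}, which is what the paper cites. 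The remark you close with is well taken: Theorem~\ref{quantized} alone cannot give openness of $\Omega_{\delta,j}$ because a neighbourhood of $(\alpha_0,A_0)$ in $\R\times C^\omega_\delta$ necessarily contains rational frequencies, and it is the winding-number quantization on $\UH$ (valid for all $\alpha$) that closes this gap; the paper leaves this implicit.
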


\begin{proof}

The first part follows from openness of $\UH$ and
the regularity of the Lyapunov exponent restricted to $\UH$.  For the second
part, we use Corollary \ref {alter}
and upper semicontinuity of the Lyapunov exponent
to conclude that $\omega(\alpha,A)=j$ implies that $(\alpha,A_{\delta'}) \in
\UH$ and has acceleration $j$ for every $\delta'$ sufficiently small, which
gives also $L(\alpha,A)=L(\alpha,A_{\delta'})-2 \pi j \delta'$.
\end{proof}

We can now give the proof of Theorems \ref {e}, \ref {v} and \ref {frequen}.
For definiteness, we will consider
Theorem \ref {v}, the argument is exactly the same for the other theorems.
Define a stratification of the
parameter space $\X=\R \times X$:
$\X_0=\X$, $\X_1 \subset \X_0$
is the set of $(E,\lambda)$ such that
$(\alpha,A^{(E-v_\lambda)})$ is not uniformly hyperbolic and for $j \geq 2$,
$\X_j \subset \X_1$ is the set of $(E,\lambda)$ such that
$\omega(\alpha,A^{(E-v_\lambda)}) \geq j-1$.

Since uniform hyperbolicity is open and the acceleration is upper
semicontinuous, each $\X_j$ is closed, so this is indeed a stratification. 
Since the $0$-th stratum $X_0 \setminus X_1$
corresponds to uniformly hyperbolic cocycles, the
Lyapunov exponent is analytic there.

By quantization, the $j$-th stratum,
$j \geq 2$, corresponds to cocycles which are not uniformly hyperbolic and
have acceleration $j-1$.  For each $(E,\lambda_0)$ in such a stratum,
choose $\delta>0$ such that $\lambda \mapsto
A^{(v_\lambda)}$ is an analytic function in a neighborhood of $\lambda_0$. 
The analyticity of the Lyapunov exponent restricted to the stratum is then a
consequence of Proposition \ref {pluri}.

As for a parameter $(E,\lambda)$ in the
first stratum $X_1 \setminus X_2$, quantization implies that
$(\alpha,A^{(E-v_\lambda)})$ has non-positive acceleration, so by
Remark \ref {omega real}
$(\alpha,A^{(E-v_\lambda)})$ must be regular with zero acceleration.
Since it is not uniformly hyperbolic,
Theorem \ref {uniformly hyperbolic}
implies that $L(\alpha,A^{(E-v_\lambda)})=0$.  Thus the
Lyapunov exponent is in fact identically $0$ in the first stratum.\qed

\subsection{Codimensionality of critical cocycles}

Non-regular cocycles split into two groups, the ones with positive
Lyapunov exponent (non-uniformly hyperbolic cocycles), and the ones
with zero Lyapunov exponent, which we call {\it critical cocycles}.\footnote
{As explained before, this terminology is consistent
with the almost Mathieu operator terminology: it turns out that if
$v(x)=2 \lambda \cos 2 \pi (\theta+x)$, $\lambda \in \R$, then
$(\alpha,A^{(E-v)})$ is critical if and only if
$\lambda=1$ and $E \in\Sigma_{\alpha,v}$.}

As discussed before, the first group has been extensively studied recently
(\cite {BG}, \cite {GS}, \cite {GS1}, \cite {GS2}).
But very little is known about the second one.

Though our methods do not provide new information on the dynamics of
critical cocycles, they are perfectly adapted to show that critical cocycles
are rare.  This is somewhat surprising, since in dynamical systems,
it is rarely the case that the success of parameter exclusion precedes a
detailed control of the dynamics!

Of course, for $\SL(2,\C)$ cocycles, our previous results already
show that critical cocycles are rare in certain one-parameter families,
since for every $(\alpha,A)$, for every $\delta \neq 0$ small,
$(\alpha,A_\delta)$ is regular, and hence not critical.  But for our
applications we are mostly concerned with $\SL(2,\R)$-valued
cocycles, and even
more specifically, with Schr\"odinger cocycles.

If $(\alpha,A) \in (\R \setminus \Q) \times
C^\omega_\delta(\R/\Z,\SL(2,\C))$ is critical with acceleration $j$, then
$(\alpha,A) \in \Omega_{\delta,j}$ and $L_{\delta,j}=0$.  Moreover,
if $A$ is $\SL(2,\R)$-valued, criticality implies that the acceleration is
positive (see Remark \ref {omega real}).
So the locus of critical $\SL(2,\R)$-valued cocycles is covered
by countably many analytic sets $L_{\delta,j}^{-1}(0)$.  Thus the main
remaining issue is to show
that the functions $L_{\delta,j}$ are non-degenerate.

\begin{thm} \label {cod1}

For every $\alpha \in \R \setminus \Q$, $\delta>0$ and $j>0$, if
$v_* \in C^\omega_\delta(\R/\Z,\R)$ and
$\omega(\alpha,A^{(v_*)})=j$ then $v \mapsto L_{\delta,j}(\alpha,A^{(v)})$
is a submersion in a neighborhood of $v_*$.

\end{thm}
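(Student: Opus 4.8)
The plan is to compute the derivative of $v \mapsto L_{\delta,j}(\alpha,A^{(v)})$ at $v_*$ in a direction $w \in C^\omega_\delta(\R/\Z,\R)$ and exhibit a direction for which it is nonzero; since $L_{\delta,j}$ is $C^\infty$ by Proposition \ref{pluri}, this suffices for the submersion statement. First I would fix $0<\delta'<\delta$ small enough that $(\alpha, A^{(v_*)}_{\delta'}) \in \UH$ and $\omega(\alpha, A^{(v_*)}_{\delta'}) = j$, so that near $v_*$ we have $L_{\delta,j}(\alpha,A^{(v)}) = L(\alpha, A^{(v)}_{\delta'}) - 2\pi j \delta'$, and it is enough to differentiate $v \mapsto L(\alpha, A^{(v)}_{\delta'})$ on the uniformly hyperbolic locus. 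Here the key structural input is that on $\UH$ the Lyapunov exponent is given by integrating the logarithmic growth rate along the invariant (unstable) direction: writing $u(\cdot)$ for the analytic unstable section of $(\alpha, A^{(v)}_{\delta'})$ (a holomorphic section of $\P\C^2$), one has the classical formula
\be
L(\alpha, A^{(v)}_{\delta'}) = \int_{\R/\Z} \ln \frac{\|A^{(v)}(x + i\delta') \cdot \hat u(x)\|}{\|\hat u(x)\|} \, dx,
\ee
for any nonvanishing holomorphic lift $\hat u$ of $u$, and by invariance of $u$ the terms telescope so this equals the Lyapunov exponent. The first variation of this with respect to $v$ then has the usual feature that the contribution from varying the section $u$ itself drops out (because $u$ is the critical point of the growth functional, or more concretely because the boundary terms from telescoping cancel), leaving only the explicit dependence through $A^{(v)}$.

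The second step is therefore to carry out this first-variation computation. Perturbing $v_* \rightsquigarrow v_* + tw$ and differentiating at $t = 0$, the derivative of $L$ should come out as an integral over $\R/\Z$ of $w(x + i\delta')$ paired against a quantity built from the unstable and stable directions of the unperturbed cocycle — concretely something of the form $\int_{\R/\Z} w(x+i\delta') \, \rho(x)\, dx$, where $\rho$ is a (real part of a) meromorphic object expressed via the stable and unstable eigenline data of $A^{(v_*)}$ at the shifted torus, coming from the $(1,1)$ matrix entry of the derivative $\partial_v A^{(v)} = \left(\bm 1 & 0 \\ 0 & 0\em\right)$ contracted in the invariant frame. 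The point is that $\rho$ is a nonzero real-analytic (indeed pluriharmonic-type) function: it cannot vanish identically because $A^{(v_*)}$ is uniformly hyperbolic, hence the unstable and stable directions are everywhere distinct and the relevant "susceptibility" $\rho$ is a genuine nonvanishing weight. Then choosing $w$ supported so that $\int w(x+i\delta')\rho(x)\,dx \neq 0$ — which is possible since $\rho \not\equiv 0$ and real-analytic functions $w$ on $\R/\Z$, extended to the band, are dense enough to pair nontrivially against any nonzero such weight — gives the nonvanishing of the derivative.

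I expect the main obstacle to be the justification that the variational formula really does have the telescoping/boundary-cancellation structure that kills the $\delta u$ term, and especially the verification that the resulting weight $\rho$ is not identically zero. The first issue is handled by working with the transfer matrix cocycle directly: write $A^{(v)}_N = A^{(v)}(x + (N-1)\alpha + i\delta')\cdots A^{(v)}(x+i\delta')$, use $\frac1N \int \ln\|A^{(v)}_N\| \to L$, differentiate under the integral and the log, and observe that in the uniformly hyperbolic regime the normalized images $A^{(v)}_N \cdot v_0/\|A^{(v)}_N\cdot v_0\|$ converge exponentially fast (uniformly in $x$) to the unstable direction, so the cross terms are $O(1/N)$ and only the diagonal "explicit" term survives in the limit — this is exactly the mechanism that makes $L|_{\UH}$ not just $C^\infty$ but given by the clean integral above. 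For the nonvanishing of $\rho$: if $\rho \equiv 0$ then the derivative of $L$ in every real-analytic direction would vanish, in particular $L$ would be locally constant under all real perturbations of $v_*$ in $C^\omega_\delta$; but one can perturb $v_*$ within Schrödinger cocycles (varying the energy, $v_* \rightsquigarrow v_* + t$, i.e. $w \equiv 1$) and the derivative of the Lyapunov exponent of a uniformly hyperbolic Schrödinger cocycle in the energy is explicitly computable and strictly positive (it equals an integral of a positive density related to the derivative of the integrated density of states, which is nonzero off the spectrum) — contradiction. So already the single direction $w\equiv 1$ does the job, which in fact streamlines the whole argument: it reduces Theorem \ref{cod1} to the classical fact that $\partial_E L > 0$ for uniformly hyperbolic Schrödinger cocycles on the shifted torus, combined with the identification $L_{\delta,j}(\alpha, A^{(v)}) = L(\alpha, A^{(v)}_{\delta'}) - 2\pi j\delta'$ from Proposition \ref{pluri}.
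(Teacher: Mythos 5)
Your setup (reduce to differentiating $v\mapsto L(\alpha,A^{(v)}_{\delta'})$ on the uniformly hyperbolic locus, and expressing the first variation as $\Re\int_{\R/\Z} w(x+i\delta')\rho(x)\,dx$ with $\rho$ built from the stable/unstable directions) matches the paper's, but the non-degeneracy step is where the proposal breaks down, in two ways.

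First, the logical reduction is incomplete. The derivative in direction $w$ is $\Re\int w\rho$ with $w$ \emph{real}-valued, and "$\Re\int w\rho=0$ for all real $w$" is strictly weaker than "$\rho\equiv 0$"; it only forces a reflection symmetry on the Fourier coefficients of $\rho$ (namely $\hat\rho(k)=-\overline{\hat\rho(-k)}$). So even granting "$\rho\not\equiv 0$ since $u\neq s$," you have not ruled out the failure of the submersion. The paper's proof lives entirely in the residual case you skip: the symmetry of Fourier coefficients is used to analytically continue $q_3$ (your $\rho$, up to sign and shift) across $\Im x=0$, then to continue $u$ and $s$, then to conjugate $A^{(v_*)}$ into $\SO(2,\R)$, and finally an ergodic-average argument shows this forces $\omega(\alpha,A^{(v_*)})=0$, contradicting $j>0$. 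This is the actual content of the theorem and none of it appears in your argument.

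Second, the "streamlined" alternative — that the single direction $w\equiv 1$ works because $\partial_E L_{\delta,j}>0$ — is false. Take the almost Mathieu operator with $\lambda>1$: by Theorem \ref{am1}, $L(\alpha,A^{(E-\lambda v)}_\epsilon)=\max\{L(\alpha,A^{(E-\lambda v)}),\ln\lambda+2\pi\epsilon\}$, so for $E$ in the spectrum and $\delta'>0$ small one gets $L_{\delta,1}(\alpha,A^{(E-\lambda v)})=(\ln\lambda+2\pi\delta')-2\pi\delta'=\ln\lambda$, which is \emph{constant} in $E$ in a neighborhood of the spectrum. Thus the derivative of $L_{\delta,j}$ in the $E$-direction vanishes identically there. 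This does not contradict Theorem \ref{cod1} (a submersion on a Banach space may be constant along a line), but it kills your proposed shortcut, and it also shows that the informal appeal to "the classical fact that $\partial_E L>0$ for uniformly hyperbolic Schrödinger cocycles" is about the real Lyapunov exponent off the spectrum, not about the analytic continuation $L_{\delta,j}$ through the spectrum, which is the object you must control here.
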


This theorem immediately implies Theorem \ref {cod}.

We are also able to show non-triviality in the case of
non-Schr\"odinger cocycles, see Remark \ref {ak2}:
though the derivative of $L_{\delta,j}$ may vanish,
this forces the dynamics to be particularly nice, and
it can be shown that the second derivative is non-vanishing.

\subsection{Almost reducibility} \label {almost reducibility}

The results of this paper give further motivation to
the research on the set of
regular cocycles with zero Lyapunov exponent.  The central problem here is
addressing the following conjecture.

\begin{conjecture}[Almost Reducibility Conjecture]

Regularity with zero Lyapunov exponents implies almost reducibility.  More
precisely, assume that $L(\alpha,A_\epsilon)=0$ for $a<\epsilon<b$.  Then
for every $n$ there exists a holomorphic map
$B_n:\{a+1/n<|\Im z|<b-1/n\} \to \SL(2,\C)$ such that
$\|B_n(z+\alpha) A(z) B(z)^{-1}-\id\|<1/n$ for $a+1/n<\Im z<b-1/n$.
Moreover, if $a=-b$ and $A$ is real-symmetric then each $B_n$ can be chosen
to be real-symmetric.

\end{conjecture}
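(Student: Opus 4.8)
The plan is to convert the hypothesis into a uniform subexponential bound for the iterates on every interior sub-strip and then to construct the $B_n$ by a scheme adapted to the arithmetic of $\alpha$. First I would make the standard reductions: since the conclusion concerns sub-strips and is invariant under translating $\epsilon$, it is enough to treat $a=-b=\delta$ with $A$ real-symmetric and $L(\alpha,A_\epsilon)=0$ for all $|\epsilon|<\delta$, the general non-symmetric statement following from the same argument with the real-symmetric normalizations dropped; note that then the acceleration vanishes identically on $(-\delta,\delta)$, in accordance with Theorem~\ref{quantized}, so each $(\alpha,A_\epsilon)$ is regular. I would also reduce to producing, for each $n$, a real-symmetric holomorphic conjugacy on $\{|\Im z|<\delta-1/n\}$ bringing $A$ to within $1/n$ of a constant rotation; the displayed normalization to the identity then follows by absorbing the constant --- immediately when the fibered rotation number $\rho(\alpha,A)$ is zero, and otherwise by the usual device of passing to a finite cover, or of reading almost reducibility modulo constant rotations.

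\emph{Uniform subexponentiality.} Next I would show that for every $\delta'<\delta$ one has $\sup_{|\Im z|\le\delta'}\|A_n(z)\|=e^{o(n)}$. The function $u_n(z):=\frac{1}{n}\ln\|A_n(z)\|$ is subharmonic, nonnegative (every $M\in\SL(2,\C)$ has $\|M\|\ge1$), and uniformly bounded above on $\{|\Im z|\le\delta\}$; the period-averages $\bar u_n(\epsilon):=\int_{\R/\Z}u_n(x+i\epsilon)\,dx$ are convex in $\epsilon$, nonnegative, and converge pointwise on $(-\delta,\delta)$ to $L(\alpha,A_\epsilon)=0$, hence converge to $0$ uniformly on compact sub-intervals; feeding this into the sub-mean-value inequality over small disks then gives $u_n\to0$ uniformly on $\{|\Im z|\le\delta'\}$. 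This is the point at which the present framework enters: the hypothesis has been turned into the assertion that $(\alpha,A)$ is uniformly subcritical on every interior sub-strip.

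\emph{Construction of the conjugacies.} The argument now splits according to the arithmetic of $\alpha$. If $\alpha$ is Diophantine, I would run an analytic almost-reducibility KAM scheme: the uniform bound first lets one conjugate $A$, by a bounded real-symmetric holomorphic conjugacy, to a cocycle close to a constant in $\SO(2,\R)$ --- a quantitative averaging argument exploiting that zero Lyapunov exponent together with subexponentially bounded iterates forces near-rotational behaviour --- after which an Eliasson-type iteration, in the refined form valid throughout the regime of bounded iterates (in the spirit of the interplay between KAM and Aubry duality), drives the cocycle towards a constant, with the Diophantine condition used to solve the linearized cohomological equation at each step with polynomially controlled loss of analyticity. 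One stops after finitely many steps to reach distance $<1/n$, so the cumulative loss of strip stays below $1/n$, and resonant rotation numbers, where full reducibility fails, are handled by conjugating across each resonance, which is compatible with \emph{almost} reducibility. If instead $\alpha$ is Liouville, KAM is unavailable and I would use duality: for a Schr\"odinger cocycle $A=A^{(E-v)}$ the uniform subexponential bound translates, through the growth/localization dictionary of Bourgain-Jitomirskaya and Avila-Jitomirskaya, into almost localization of the dual operator, and almost localization of the dual yields almost reducibility of $(\alpha,A)$ by building each $B_n$ explicitly out of truncated localized eigenfunctions, with the real-symmetric normalization preserved throughout; a general $\SL(2,\C)$ or $\SL(2,\R)$ cocycle in this regime would first be matched, using its fibered rotation number and the bounded-iterates structure, to a Schr\"odinger cocycle carrying the same data, reducing to the case just treated.

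\emph{Main obstacle.} The hard part will be the Liouville regime, together with the reduction of general cocycles to Schr\"odinger ones: there is no perturbative scheme to fall back on, and one must transport the full strength of the subexponential bound through the non-self-adjoint duality transform while keeping exact control of both the analyticity strip and the real-symmetric structure. A second difficulty will be gluing the two regimes --- the constants in the Diophantine scheme degenerate as $\alpha$ approaches a Liouville number, so the passage has to be organized along the continued fraction expansion of $\alpha$ rather than by a clean dichotomy. Finally, attaining precisely the domains $\{|\Im z|<\delta-1/n\}$, rather than some fixed smaller strip, is exactly what forces the finite-step ``almost'' formulation and requires careful bookkeeping of the cumulative loss of analyticity at every stage of either construction.
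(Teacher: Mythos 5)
This statement is a \emph{conjecture} in the paper, not a theorem: the paper offers no proof of it, and explicitly says so. The author reports only partial results — a proof when $\alpha$ is exponentially well approximated by rationals (i.e.\ $\limsup (\ln q_{n+1})/q_n>0$), and a proof for the almost Mathieu operator via \cite{AJ2}, \cite{A1}, \cite{A} — precisely because, at the time of writing, no complete argument was available. So there is no ``paper's own proof'' to compare your sketch against, and your proposal should not be read as if it were filling in a proof the paper omitted.

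As a research plan your sketch is in the right spirit and correctly identifies the two pillars (Eliasson-type KAM for Diophantine $\alpha$, and the duality/almost-localization route of Bourgain--Jitomirskaya and Avila--Jitomirskaya for Liouville $\alpha$), and your first step — upgrading $L(\alpha,A_\epsilon)=0$ on $(a,b)$ to a uniform $e^{o(n)}$ bound for $\|A_n\|$ on interior sub-strips via subharmonicity/convexity — is sound and is indeed the correct dynamical translation of the hypothesis. But several of your steps are asserted, not established, and they are exactly the places where the conjecture was open. First, your reduction ``it is enough to treat $a=-b$ with $A$ real-symmetric'' is not valid: the general $\SL(2,\C)$ statement does not follow from the real-symmetric one, since there is no symmetrization that preserves the Lyapunov exponent and the strip. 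Second, the KAM scheme needs as input a cocycle that is already close to a constant in an appropriate quantitative sense; the passage from ``subexponential growth on a strip'' to ``conjugate to near-constant'' is the heart of the matter and is not a consequence of a simple ``quantitative averaging argument.'' Third, the Liouville branch of your plan rests on ``matching a general cocycle to a Schrödinger cocycle carrying the same data,'' which is not an available reduction, and the duality/almost-localization machinery you invoke was, at the time, developed only for Schrödinger (or at best Jacobi) cocycles. You candidly flag these as the ``main obstacles,'' which is honest, but it also means the proposal is a roadmap rather than a proof — consistent with the statement's status as a conjecture here.
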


This is a slightly more precise and general version than a conjecture first
made in \cite {AJ2}.
What makes this conjecture so central is that, in the real-symmetric
case, which is most important for our considerations,
almost reducibility was analyzed in much detail in recent works, see
\cite {AJ2}, \cite {A1}, \cite {AFK} and \cite {A}, so a proof would
immediately give a very fine picture of the subcritical regime.  In
particular, coupled with the results of this paper about the critical
regime, and the results of Bourgain-Goldstein about the supercritical
regime, a proof of the Almost Reducibility Conjecture would give a proof of
Conjecture \ref {sc}:
\begin{enumerate}
\item The Almost Reducibility Conjecture implies that the subcritical regime
can only support absolutely continuous spectrum \cite {A},
\item \cite {BG} implies that pure point spectrum is typical throughout the
supercritical regime,\footnote {More precisely, for every fixed potential,
and for almost every frequency, the spectrum is pure point with
exponentially decaying eigenfunctions throughout the region of the spectrum
where the Lyapunov exponent is positive.}
\item Theorem \ref {cod} implies that typically the critical regime is
invisible to the spectral measures.\footnote {Since in the continuation
of this series, \cite {A3}, we will show a stronger fact
(for fixed frequency, a typical potential has no critical energies), we just
sketch the argument.  For fixed frequency,
Theorem \ref {cod} implies that a typical potential admits at most countably
many critical energies.  Considering phase changes
$v_\theta(x)=v(x+\theta)$, which do not change the critical set, we see that
for almost every $\theta$ the critical set, being a fixed countable set,
can not carry any spectral weight (otherwise the average over $\theta$
of the spectral measures would have atoms, but this average has a
continuous distribution, the integrated density of states \cite {AS}).}
\end{enumerate}

We have so far, see \cite {A},
been able to prove this conjecture when $\alpha$ is
exponentially well approximated by rational numbers $p_n/q_n$:
$\limsup \frac {\ln
q_{n+1}} {q_n}>0$.  In the case of the almost Mathieu operator, the almost
reducibility conjecture was proved in \cite {AJ2}, \cite {A1} and \cite {A}.

\subsection{Further comments}

As mentioned before, it follows from the combination of \cite
{BJ1} and \cite {BJ2}, that the Lyapunov exponent is zero in the spectrum,
provided the potential is sufficiently small, irrespective of the frequency.
This is a very surprising result from the dynamical point of view.

For instance, fix some non-constant small $v$, and consider $\alpha$ close
to $0$.  Then the spectrum is close, in the Hausdorff topology, to the
interval
$[\inf v-2,\sup v+2]$.  However, if $E \notin [\inf v+2,\sup v-2]$
we have
\be
\lim_{n \to \infty} \lim_{\alpha \to 0} \frac {1} {n} \int_{\R/\Z} \ln
\|A^{(E-v)}(x+(n-1) \alpha) \cdots A(x)\| dx>0.
\ee
At first it might seem that as $\alpha \to 0$ the dynamics of
$(\alpha,A^{(E-v)})$ becomes
increasingly complicated and we should expect the
behavior of large potentials (with
positive Lyapunov exponents by \cite {SS}).\footnote
{In fact, the Lyapunov exponent function converges in the $L^1$-sense,
as $\alpha \to 0$, to a continuous function, positive outside $[\sup
v-2,\inf v+2]$ (see the argument of \cite {AD}).  This reconciles with the
fact that the edges of the spectrum (in two intervals
of size $\sup v-\inf v$) become increasingly thinner (in measure) as $\alpha
\to 0$.}  Somehow, delicate cancellation between expansion and contraction
takes place precisely at the spectrum and kills the Lyapunov exponent.

Bourgain-Jitomirskya's result
that the Lyapunov exponent must be zero on the spectrum in this situation
involves duality and localization arguments which are far from the dynamical
point of view.  Our work provides a different explanation for it, and
extends it from $\SL(2,\R)$-cocycles to $\SL(2,\C)$-cocycles.  Indeed,
quantization implies that all cocycles near constant have zero acceleration. 
Thus they are all regular.  Thus if $A$ is close to constant and
$(\alpha,A)$ has a positive Lyapunov exponent then it must
be uniformly hyperbolic.

We stress that while this argument explains why
constant cocycles are far from non-uniform hyperbolicity,
localization methods remain crucial to the
understanding of several aspects of the dynamics of
cocycles close to a constant one, at least in the Diophantine regime.

Let us finally make a few remarks and pose questions about the actual
values taken by the acceleration.
\begin{enumerate}
\item
If the coefficients of $A$ are trigonometric polynomials of degree at most
$n$, then $|\omega(\alpha,A)| \leq n$ by convexity
(since $L(\alpha,A_\epsilon) \leq \sup_{x
\in \R/\Z} \ln \|A(x+\epsilon i)\| \leq 2 \pi n \epsilon+O(1)$).
\item On the other hand, if $\alpha \in \R
\setminus \Q$, $|\lambda| \geq 1$ and $n \in \N$, then for $v(x)=2 \lambda
\cos 2 \pi n x$ we have $\omega(\alpha,A^{(E-v)})=n$ for every $E \in
\Sigma_{\alpha,v}$.  In the case $n=1$ (the almost Mathieu operator), this
is shown in the Appendix.  The general case reduces to this one since
for any $A \in C^\omega(\R/\Z,\SL(2,\C))$ and $n \in \N$,
$L(n \alpha,A(x))=L(\alpha,A(n x))$, which implies $n \omega(n
\alpha,A(x))=\omega(\alpha,A(n x))$.
\item If $\alpha \in \R \setminus \Q$ and
$A$ takes values in $\SO(2,\R)$, the acceleration is easily seen to be
the norm of the topological degree of $A$.
The results of \cite {AK2} imply
that this also holds for ``premonotonic cocycles'' which include
small $\SL(2,\R)$ perturbations of $\SO(2,\R)$ valued cocycles with non-zero
topological degree.
\item It seems plausible that the
norm of the topological degree is always a lower bound for the acceleration
of $\SL(2,\R)$ cocycles. 
In case of non-zero degree, is this bound achieved precisely by premonotonic
cocycles?
\item Consider a typical
perturbation of the potential
$2 \lambda \cos 2 \pi n x$, $\lambda>1$.  Do energies with any fixed
acceleration $1 \leq k \leq n$ form a set of positive measure?  It seems
promising to use the ``Benedicks-Carleson'' method of Lai-Sang Young \cite
{Y} to address aspects of this question ($k=n$, large $\lambda$, allowing
exclusion of small set of frequencies).  One is also tempted to relate the
acceleration to the number of ``critical points'' for the dynamics
(which can be identified when her method works).  Colisions between a few
critical points might provide a mechanism for the appearance of energies
with intermediate acceleration.
\end{enumerate}

\subsection{Outline of the remaining of the paper}

The outstanding issues (not covered in the introduction) are the proofs
of Theorems \ref {quantized}, \ref {uniformly hyperbolic} and \ref {cod1}.

We first address quantization (Theorem \ref {quantized}) in section \ref
{accel}.
The proof uses
periodic approximation.  A Fourier series estimate shows that as the
denominators of the approximations grow, quantization becomes more and more
pronounced.  The result then follows by
continuity of the Lyapunov exponent \cite {JKS}.

Next we show, in section \ref {reguh},
that regularity with positive Lyapunov exponent implies uniform
hyperbolicity (the hard part of Theorem \ref {uniformly hyperbolic}).  The proof again proceeds
by periodic approximation.  We first notice that the Fourier series estimate
implies that periodic approximants are uniformly hyperbolic, and hence have
unstable and stable directions.  If we can show that we can take an
analytic limit of those directions, then
the uniform hyperbolicity of $(\alpha,A)$ will follow.
A simple normality argument
shows that we only need to prove that the invariant directions do not get too
close as the denominators grow.  We show (by direct computation) that if
they would get too close, then the derivative of the Lyapunov exponent would
be relatively large with respect to perturbations of some Fourier modes of
the potential.  This contradicts a ``macroscopic'' bound on the derivative
which comes from pluriharmonicity.

We then show, in section \ref {loc},
the non-vanishing of the
derivative of the canonical analytic extension of the
Lyapunov exponent, $L_{\delta,j}$ (Theorem \ref {cod1}).
Under the hypothesis that $\omega(\alpha,A^{(v_*)})=j>0$,
$(\alpha,A^{(v_*)}_{\delta'})
\in \UH$ for $0<\delta'<\delta_0$ ($0<\delta_0<\delta$ small),
so we can define holomorphic invariant directions $u$ and $s$,
over $0<\Im z<\delta_0$.  Using the explicit expressions
for the derivative of the Lyapunov exponent
in terms of the unstable and stable directions $u$ and $s$,
derived in section \ref {reguh},
we conclude that the vanishing of the derivative would imply a symmetry of
Fourier coefficients (of a suitable expression involving $u$ and $s$), which
is enough to conclude that $u$ and $s$
analytically continuate through $\Im
z=0$.  This implies that $(\alpha,A^{(v_*)})$ is ``conjugate to a cocycle of
rotations'', which implies that its acceleration is zero, contradicting the
hypothesis.

We also include two appendices.  The first gives the basic facts about
uniformly hyperbolic cocycles, especially regarding the regularity of the
Lyapunov exponent.  The second shows

We also include an appendix showing how to use quantization to compute the
Lyapunov exponent and acceleration in the case of the almost
Mathieu operator, which is used in deriving the Example Theorem.

{\bf Acknowledgements:} I am grateful to Svetlana Jitomirskaya and David
Damanik for several detailed comments which greatly improved the exposition.

\section{Quantization of acceleration: proof of Theorem \ref {quantized}}
\label {accel}

We will use the continuity in the frequency of the Lyapunov exponent \cite
{BJ1}, \cite {JKS}.\footnote {
Bourgain-Jitomirskaya actually restricted considerations to the case of
Schr\"odinger (in particular, $\SL(2,\R)$ valued) cocycles.  Their result
was generalized to the $\SL(2,\C)$ case in the work of
Jitomirskaya-Koslover-Schulteis \cite {JKS}.}

\begin{thm}[\cite {JKS}] \label {con}

If $A \in C^\omega(\R/\Z,\SL(2,\C))$, then
the $\alpha \mapsto L(\alpha,A)$, $\alpha \in \R$, is continuous at every
$\alpha \in \R \setminus \Q$.

\end{thm}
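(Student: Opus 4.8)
The plan is to fix $\alpha_0 \in \R\setminus\Q$ and prove separately the two inequalities $\limsup_{\alpha\to\alpha_0}L(\alpha,A)\le L(\alpha_0,A)$ and $\liminf_{\alpha\to\alpha_0}L(\alpha,A)\ge L(\alpha_0,A)$, with $\alpha$ ranging over all of $\R$. The first is soft and holds at every $\alpha$: writing $a_n(\alpha):=\int_{\R/\Z}\ln\|A_n(x)\|\,dx$, submultiplicativity gives $a_{n+m}(\alpha)\le a_n(\alpha)+a_m(\alpha)$, hence $L(\alpha,A)=\inf_n a_n(\alpha)/n$; each $a_n$ is continuous in $\alpha$ because $(\alpha,x)\mapsto A_n(x)$ is continuous, so $L(\cdot,A)$ is an infimum of continuous functions, hence upper semicontinuous. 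Since $L\ge 0$, this already gives continuity at $\alpha_0$ when $L_0:=L(\alpha_0,A)=0$, so one may assume $L_0>0$; it remains to prove lower semicontinuity at $\alpha_0$.

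For this, fix small $\delta,\eta>0$ and a sequence $\alpha_k\to\alpha_0$. Two facts about the cocycle over the uniquely ergodic rotation by $\alpha_0$ are the inputs. First, a standard unique-ergodicity argument (iterate submultiplicativity along a good scale $m$, then use uniform convergence of Birkhoff averages of the continuous function $\ln\|A_m(\cdot,\alpha_0)\|$ over the rotation by $m\alpha_0$) gives $\tfrac1N\sup_x\ln\|A_N(x,\alpha_0)\|\to L_0$; in particular $\ln\|A_n(x,\alpha_0)\|\le n(L_0+\delta)$ for every $x$ once $n$ is large. Second, combined with the trivial $\tfrac1{2n}a_{2n}(\alpha_0)\ge L_0$, a Chebyshev-type estimate shows that for $n$ large the ``bad set'' $\cB_n:=\{x:\ln\|A_{2n}(x,\alpha_0)\|<2n(L_0-\delta)\}$ has Lebesgue measure $<\eta$ (for generic $\delta$ its boundary is finite). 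Freeze such a large scale $n$. Because $A$ is analytic, hence Lipschitz, $A_n(\cdot,\alpha)$ and $A_{2n}(\cdot,\alpha)$ converge uniformly to $A_n(\cdot,\alpha_0)$ and $A_{2n}(\cdot,\alpha_0)$ as $\alpha\to\alpha_0$, with an error of the form $C_n|\alpha-\alpha_0|$ in which $C_n$ absorbs the $e^{O(n)}$ Lipschitz constant of the $2n$-step matrices; so one may choose $k$ large --- \emph{after} $n$ is fixed --- so that $\|A_n(y,\alpha_k)-A_n(y,\alpha_0)\|<1$ and $\|A_{2n}(y,\alpha_k)-A_{2n}(y,\alpha_0)\|<1$ for all $y$ (and, if $\alpha_k$ is rational, so that its denominator is also large).

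Now decompose $A_{Mn}(x,\alpha_k)$ into the $M$ length-$n$ blocks $B_m:=A_n(x+mn\alpha_k,\alpha_k)$. The uniform approximation gives $\|B_m\|\le e^{n(L_0+\delta)}+1$ for all $m$, while $B_{m+1}B_m=A_{2n}(x+mn\alpha_k,\alpha_k)$ lies within $1$ of $A_{2n}(x+mn\alpha_k,\alpha_0)$, whose norm is $\ge e^{2n(L_0-\delta)}$ unless $x+mn\alpha_k\in\cB_n$; since $\mathrm{Leb}(\cB_n)<\eta$, $\partial\cB_n$ is finite, and the orbit segment $\{x+mn\alpha_k\bmod1\}_{m<M}$ is well-distributed (for $M$ large, resp.\ for large denominator of $\alpha_k$), this occurs for only an $\eta+o(1)$ fraction of indices. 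Thus, after merging the rare badly-aligned spots into slightly longer blocks, one obtains a sequence of blocks --- all of norm $\asymp e^{nL_0}$ and all consecutive products of norm $\asymp e^{2nL_0}$ --- to which the avalanche principle applies, yielding $\tfrac1{Mn}\ln\|A_{Mn}(x,\alpha_k)\|\ge L_0-O(\delta+\eta)$ for every $x$ and every $M$. Letting $M\to\infty$ and integrating in $x$ gives $L(\alpha_k)\ge L_0-O(\delta+\eta)$, and since $\delta,\eta$ were arbitrary, $\liminf_k L(\alpha_k)\ge L_0$, which finishes the proof. I expect the alignment/merging step to be the main obstacle: the $\alpha_k$-orbit drifts linearly away from any genuine $\alpha_0$-orbit, so the comparison $B_{m+1}B_m\approx A_{2n}(\cdot,\alpha_0)$ for \emph{all} $m$ is affordable only because the scale $n$ is frozen first and $\alpha_k\to\alpha_0$ is then taken fast enough to beat the $e^{O(n)}$ Lipschitz constant of the $2n$-step matrices --- after which the uniform upper bound and the avalanche principle are precisely what keep cancellations in the long product (and the $O(\eta)$-fraction of bad spots) under control. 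Finally, the passage from $\SL(2,\R)$ (Bourgain--Jitomirskaya) to $\SL(2,\C)$ in \cite{JKS} requires no new idea: the unique-ergodicity bound, the Chebyshev estimate, and the avalanche principle all hold verbatim for complex matrices via the singular value decomposition.
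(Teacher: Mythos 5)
The paper does not prove this theorem; it is cited from Jitomirskaya--Koslover--Schulteis (extending Bourgain--Jitomirskaya from $\SL(2,\R)$ to $\SL(2,\C)$), so there is no in-paper proof to compare against. Your blueprint --- upper semicontinuity from subadditivity, and for lower semicontinuity freeze a large finite scale $n$ via unique ergodicity, compare $A_n(\cdot,\alpha_k)$ to $A_n(\cdot,\alpha_0)$ only \emph{after} $n$ is fixed, then reassemble long products with the avalanche principle --- is indeed the Bourgain--Jitomirskaya scheme, and you have correctly identified that freezing $n$ before letting $\alpha_k\to\alpha_0$ is what makes the finite-scale comparison affordable. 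But the execution has genuine gaps, and the obstacle is not quite where you expect it.

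The claimed pointwise bound $\tfrac1{Mn}\ln\|A_{Mn}(x,\alpha_k)\|\ge L_0-O(\delta+\eta)$ for \emph{every} $x$ and \emph{every} $M$ cannot hold: if $(\alpha_k,A)$ is not uniformly hyperbolic there are phases $x$ with $\|A_N(x,\alpha_k)\|=O(1)$ at arbitrarily large $N$. The published argument instead controls integrals, relating $L(\alpha_k)$ to $a_n(\alpha_k)/n$ and $a_{2n}(\alpha_k)/(2n)$ (which \emph{are} continuous in $\alpha_k$) through an avalanche-principle identity at a \emph{finite} range of scales, never taking $M\to\infty$ pointwise. Relatedly, the Goldstein--Schlag avalanche principle requires the number of blocks $M$ to be at most $\mu\asymp e^{cn}$, so it cannot be applied with $n$ fixed and $M\to\infty$; an induction across scales is needed, and you do not carry it out. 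Second, ``merging the rare badly-aligned spots into slightly longer blocks'' does not work: if $B_{m+1}B_m$ is badly aligned it may have norm $O(1)$, so the merged block violates the lower-norm hypothesis of the avalanche principle, and a single such pair can reset the whole product (take $B_{m+1}=B_m^{-1}$ of large norm). This is precisely why the Chebyshev bound $\mathrm{Leb}(\cB_n)<\eta$ is too weak: what the proof really needs, and what is true, is an exponentially small bad set $\mathrm{Leb}(\cB_n)<e^{-cn}$, obtained from subharmonicity of $x\mapsto\tfrac1n\ln\|A_n(x,\alpha_0)\|$ and the (arithmetic-free) large deviation estimate over irrational rotations. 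That large-deviation input is the heart of Bourgain--Jitomirskaya and cannot be replaced by Chebyshev. Until these points are repaired, the proposal is a plausible outline of the correct strategy, not a proof.
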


This result is very delicate:
the restriction of $\alpha \mapsto L(\alpha,A)$ to $\R \setminus \Q$ is not,
in general, uniformly continuous.

Notice that if $p/q$ is a rational number, then
there exists a simple expression for the Lyapunov exponent $L(p/q,A)$
\be
L(p/q,A)=\frac {1} {q} \int_{\R/\Z} \ln \rho(A_{(p/q)}(x)) dx
\ee
where $A_{(p/q)}(x)=A(x+(q-1) p/q) \cdots A(x)$ and $\rho(B)$ is the
spectral radius of an $\SL(2,\C)$ matrix
$\rho(B)=\lim_{n \to \infty} \|B^n\|^{1/n}$.  A key observation is that
if $p$ and $q$ are coprime then
the trace $\tr A_{(p/q)}(x)$ is a $1/q$-periodic function of
$x$.  This follows from the relation
\be
A(x) A_{(p/q)}(x)=A_{(p/q)}(x+p/q) A(x),
\ee
expressing the fact that $A_{(p/q)}(x)$ and $A_{(p/q)}(x+p/q)$ are conjugate
in $\SL(2,\C)$, and hence $A_{(p/q)}(x)$ is conjugate to $A_{(p/q)}(x+k
p/q)$ for any $k \in \Z$.

Fix $\alpha \in \R \setminus \Q$ and $A \in C^\omega(\R/\Z,\SL(2,\C))$
and let $p_n/q_n$ be a sequence of rational
numbers ($p_n$ and $q_n$ coprime) approaching $\alpha$ (not necessarily
continued fraction approximants).

Let $\epsilon>0$ and $C>0$ be such that
$A$ admits a bounded extension to $|\Im z|<\epsilon$ with $\sup_{|\Im
z|<\epsilon} \|A(z)\|<C$.
Since $\tr A_{(p_n/q_n)}$ is $1/q_n$-periodic,
\be
\tr A_{(p_n/q_n)}(x)=\sum_{k \in \Z} a_{k,n} e^{2 \pi i k q_n x},
\ee
with $a_{k,n} \leq 2 C^{q_n} e^{-2 \pi k q_n \epsilon}$.

Fix $0<\epsilon'<\epsilon$.  Fixing $k_0$ sufficiently large, we get
\be
\tr A_{(p_n/q_n)}(x)=\sum_{|k| \leq k_0}
a_{k,n} e^{2 \pi i k q_n x}+O(e^{-q_n}), \quad
|\Im x|<\epsilon',
\ee
for $n$ large.  Since $\max \{0,\frac {1} {2} \tr\} \leq \ln \rho \leq \max
\{0,\tr\}$, it follows that
\be
L(p/q,A_\delta)=\max_{k \leq |k_0|}
\max \{\ln |a_{k,n}|-2 \pi k \delta,0\}+o(1),
\quad \delta<\epsilon'.
\ee
Thus for large $n$, $\delta \mapsto L(p_n/q_n,A_\delta)$ is close, over
$|\delta|<\epsilon'$, to a convex piecewise linear function with
slopes in $\{-2 \pi k_0,...,2 \pi k_0\}$.
By Theorem \ref {con}, these functions
converge uniformly on compacts of
$|\delta|<\epsilon$ to $\delta \mapsto L(\alpha,A_\delta)$.  It follows
that $\delta \mapsto L(\alpha,A_\delta)$ is
a convex piecewise linear function of $|\delta|<\epsilon'$,
with slopes in $\{-2 \pi k_0,...,2 \pi k_0\}$, so
$\omega(\alpha,A) \in \Z$.\qed

\begin{rem} \label {rational}

Consider say $A(x)=\left (\bm e^{\lambda(x)} & 0 \\ 0 & e^{-\lambda(x)} \em \right )$
with $\lambda(x)=e^{2 \pi i q_0 x}$ for some $q_0>0$.  Then
$L(\alpha,A_\epsilon)=\frac {2} {\pi} e^{-2 \pi q_0 \epsilon}$ if
$\alpha=p/q$ for some $q$ dividing $q_0$, and $L(\alpha,A_\epsilon)=0$
otherwise.  This gives an example both of discontinuity of the Lyapunov
exponent and of lack of quantization of acceleration at rationals.

If we had chosen $\lambda$ as a more typical function of zero average, we
would get discontinuity of the Lyapunov exponent and lack of quantization at
all rationals, both becoming increasingly less pronounced as the
denominators grow.

\end{rem}

\section{Characterization of uniform hyperbolicity: proof of Theorem \ref
{uniformly hyperbolic}} \label {reguh}

Since the Lyapunov exponent is a $C^\infty$ function in $\UH$, the ``if''
part is obvious from quantization.
In order to prove the ``only if'' direction, we will
first show the uniform hyperbolicity of periodic
approximants and then show that uniform hyperbolicity persists in the limit.
To do this last part, we will use an explicit formula for
the derivative of the Lyapunov exponent (fixed frequency) in $\UH$.

\subsection{Uniform hyperbolicity of approximants}

\begin{lemma} \label {per}

Let $(\alpha,A) \in (\R \setminus \Q) \times C^\omega(\R/\Z,\SL(2,\R))$
and assume that $(\alpha,A_\delta)$ is regular with positive Lyapunov
exponent.  If $p/q$ is close to
$\alpha$ and $\tilde A$ is close to $A$ then $(p/q,\tilde A)$ is uniformly
hyperbolic.

\end{lemma}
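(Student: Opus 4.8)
The plan is to exploit the Fourier-series estimate on $\tr A_{(p/q)}$ already used in Section~\ref{accel}, together with the fact that $(\alpha,A_\delta)$ regular with $L(\alpha,A_\delta)>0$ forces the piecewise-affine function $\epsilon\mapsto L(\alpha,A_\epsilon)$ to be \emph{strictly} positive and affine on a neighborhood of $\epsilon=\delta$. The point is that positivity of the Lyapunov exponent, when combined with the ``$\max$ over finitely many slopes'' description of $L(p_n/q_n,(\tilde A)_\epsilon)$ obtained from the Fourier estimate, pins down which Fourier mode dominates: there is a unique $k$ with $|k|\le k_0$ such that $\ln|a_{k,n}|-2\pi k\epsilon$ realizes the maximum throughout a fixed $\epsilon$-interval around $\delta$, and that maximum is bounded below away from $0$. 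Translating back, $|a_{k,n}|e^{-2\pi k\cdot 0}=|a_{k,n}|$ (the dominant Fourier coefficient of the trace at $\epsilon=0$) is exponentially large compared to all the others and to $1$, uniformly in $n$ and in $\tilde A$ near $A$.

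\medskip

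First I would fix $0<\epsilon'<\epsilon$ with $\delta<\epsilon'$ and $k_0$ as in Section~\ref{accel}, so that for $\tilde A$ in a fixed $C^\omega_\epsilon$-neighborhood of $A$ and $p_n/q_n$ close enough to $\alpha$ we have, uniformly,
\be
L(p_n/q_n,(\tilde A)_\epsilon)=\max_{|k|\le k_0}\max\{\tfrac{1}{q_n}\ln|a_{k,n}(\tilde A)|-2\pi k\epsilon,\,0\}+o(1),\qquad |\epsilon|<\epsilon'.
\ee
Since $L(\alpha,A_\epsilon)$ is affine and positive near $\epsilon=\delta$ with integer slope $-2\pi j$, and since by Theorem~\ref{con} these functions converge to $L(\alpha,\cdot_\epsilon)$, for $n$ large the index $k=j$ achieves the max and the competitor terms (including $0$) are smaller by a definite amount on a fixed subinterval around $\delta$. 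Hence $\frac{1}{q_n}\ln|a_{j,n}(\tilde A)|$ is bounded below by a positive constant, i.e. $|a_{j,n}(\tilde A)|\ge e^{cq_n}$ for some $c>0$ independent of $n$ and of $\tilde A$ in the neighborhood, while all other $|a_{k,n}|$ with $k\ne j$ satisfy $|a_{k,n}|\le e^{-c'q_n}|a_{j,n}|$.

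\medskip

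Next I would convert this gap into uniform hyperbolicity of $(p/q,\tilde A)$. Recall $L(p/q,\tilde A)=\frac1q\int\ln\rho(\tilde A_{(p/q)}(x))\,dx$ and that $(p/q,\tilde A)$ is uniformly hyperbolic exactly when the period-$q$ cocycle is, i.e.\ when $\tilde A_{(p/q)}(x)$ is hyperbolic (eigenvalues off the unit circle) for every $x\in\R/\Z$ --- equivalently $|\tr \tilde A_{(p/q)}(x)|>2$ for all $x$. But $\tr\tilde A_{(p/q)}(x)=\sum_k a_{k,n}e^{2\pi i k q_n x}$ is, up to an $O(e^{-q_n})$ term, dominated by the single mode $a_{j,n}e^{2\pi i j q_n x}$ of modulus $\ge e^{cq_n}$; the remaining modes and the tail contribute at most $e^{-c'q_n}|a_{j,n}|+O(e^{-q_n})$. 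For a trigonometric polynomial of the form (dominant single harmonic of large modulus) + (small remainder), $|\tr|$ would be $\ge e^{cq_n}-(\text{small})\gg 2$ pointwise --- \emph{except} this is false for a genuine single harmonic $a e^{2\pi i j q_n x}$, whose modulus is the constant $|a|$: that is already $\gg 2$. So in fact $|\tr\tilde A_{(p/q)}(x)|\ge e^{cq_n}-e^{-c'q_n}e^{cq_n}-O(e^{-q_n})>2$ for all $x$ once $n$ is large. Therefore $\tilde A_{(p/q)}(x)$ is hyperbolic for every $x$, which is precisely uniform hyperbolicity of $(p/q,\tilde A)$.

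\medskip

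The \textbf{main obstacle} is the very first step: showing the dominant Fourier mode has modulus $\ge e^{cq_n}$ \emph{uniformly} in $\tilde A$ near $A$, not just for $\tilde A=A$ and not just along a subsequence. This requires that the convergence $L(p_n/q_n,(\tilde A)_\epsilon)\to L(\alpha,\tilde A_\epsilon)$ in Theorem~\ref{con} be, if not uniform in $\tilde A$, at least compatible with a uniform-in-$n$ lower bound on $L(\alpha,\tilde A_\epsilon)$ for $\tilde A$ in a fixed small neighborhood --- which follows from continuity (hence local boundedness below by a positive constant) of $\tilde A\mapsto L(\alpha,\tilde A_\delta)$ at $A$, using that $L(\alpha,A_\delta)>0$. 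One then has to be slightly careful that the choice of $k_0$ and $\epsilon'$ can be made once and for all on the neighborhood, using the uniform bound $\sup_{|\Im z|<\epsilon}\|\tilde A(z)\|<C$ there. Once the uniform spectral-gap estimate on the trace is in hand, the passage to pointwise hyperbolicity of $\tilde A_{(p/q)}$, and thus to uniform hyperbolicity, is routine.
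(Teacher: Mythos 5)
Your proposal is correct and follows essentially the same route as the paper: both exploit the Fourier estimate on $\tr \tilde A_{(p/q)}$, use the convergence $L(p_n/q_n,\cdot)\to L(\alpha,\cdot)$ to identify a single dominant Fourier mode $k=-\omega(\alpha,A)$ whose coefficient is exponentially large (with a definite gap over the competitors), and conclude that the period matrix is hyperbolic pointwise, hence the cocycle is uniformly hyperbolic; the paper simply packages this as the asymptotic $\frac1{q_n}\ln\rho(A^{(n)}_{(p_n/q_n)}(x))=L(\alpha,A_{\Im x})+o(1)$ on a band, and both handle uniformity in $\tilde A$ via joint continuity of the Lyapunov exponent at $(\alpha,A)$. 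One small slip: for the $\SL(2,\C)$-valued period matrix (which is the relevant case here, the lemma statement's $\SL(2,\R)$ notwithstanding) hyperbolicity is \emph{not} equivalent to $|\tr|>2$ but to $\tr\notin[-2,2]\subset\R$; however, since you in fact show $|\tr|$ is exponentially large, this is harmless.
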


\begin{proof}

Let us show that if $p_n/q_n \to \alpha$ and $A^{(n)} \to A$ then there exists
$\epsilon''>0$ such that
\be \label {201}
\frac {1} {q_n} \ln \rho (A^{(n)}_{(p_n/q_n)}(x))=L(\alpha,A_{\Im x})+o(1),
\quad |\Im x|<\epsilon'',
\ee
which implies the result.  In fact this estimate is just a slight adaptation
of what we did in section \ref {accel}.

Since $A^{(n)} \to A$ and $A$ is regular, we may
choose $\epsilon>0$ such that
$(\alpha,A_\delta)$ is regular for $|\delta|<\epsilon$, $A_n \in
C^\omega_\epsilon(\R/\Z,\SL(2,\C))$ for every $n$ and $A_n \to A$ uniformly
in $|\Im z|<\epsilon$.

Choose $\epsilon''<\epsilon'<\epsilon$.
We have seen in section \ref {quantized} that there exists $k_0$ such that
\be \label {202}
\tr A^{(n)}_{(p_n/q_n)}(x)=\sum_{|k| \leq k_0}
a_{k,n} e^{2 \pi i k q_n x}+O(e^{-q_n}), \quad
|\Im x|<\epsilon',
\ee
\be \label {203}
L(p_n/q_n,A^{(n)}_\delta)=\max_{k \leq |k_0|}
\max \{\ln |a_{k,n}|-2 \pi k \delta,0\}+o(1),
\quad |\delta|<\epsilon'.
\ee

By Theorem \ref {con},
$L(p_n/q_n,A^{(n)}_\delta) \to L(\alpha,A_\delta)$ uniformly on compacts of
$|\delta|<\epsilon$, so we may rewrite (\ref {203}) as
\be \label {204}
L(\alpha,A^{(n)}_\delta)=\max_{k \leq |k_0|}
\max \{\ln |a_{k,n}|-2 \pi k \delta,0\}+o(1),
\quad |\delta|<\epsilon'.
\ee
Since the left hand side in (\ref {204}) is an affine positive function of
$\delta$, with slope $2 \pi \omega(\alpha,A)$, over $|\delta|<\epsilon$,
it follows that $|\omega(\alpha,A)| \leq k_0$,
\be \label {205}
L(\alpha,A_\delta)=\ln |a_{-\omega(\alpha,A),n}|+2 \pi \omega(\alpha,A)
\delta+o(1), \quad |\delta|<\epsilon'',
\ee
and morever, if $|j| \leq k_0$ is such that $j \neq -\omega(\alpha,A)$
we have
\be \label {207}
\ln |a_{j,n}|-2 \pi j \delta+2 \pi
(\epsilon''-\epsilon') \leq L(\alpha,A_\delta)+o(1),
\quad |\delta|<\epsilon''.
\ee
Together, (\ref {202}), (\ref {205}) and (\ref {207}) imply (\ref {201}),
as desired.
\end{proof}

\subsection{Derivative of the Lyapunov exponent at uniformly hyperbolic
cocycles}

Fix $(\alpha,A) \in \UH$.  Let $u,s:\R/\Z \to \P \C^2$ be the unstable and
stable directions.

Let $B:\R/\Z \to \SL(2,\C)$ be analytic with column vectors in the
directions of $u(x)$ and $s(x)$.  Then
\be
B(x+\alpha)^{-1}A(x)B(x)=\left (\bm \lambda(x)&0\\0&\lambda(x)^{-1} \em
\right )=D(x).
\ee
Obviously $L(\alpha,A)=L(\alpha,D)$, and $\int \Re \ln
\lambda(x) dx=L(\alpha,A)$.\footnote {
Notice that the quantization of the acceleration, in the uniformly
hyperbolic case, follows immediately from this expression
(the integer arising being the number of turns $\lambda(x)$ does
around $0$).}

Write $B(x)=\left (\bm a(x)&b(x)\\c(x)&d(x) \em \right )$.  We note that
though the definition of $B$ involves arbitrary choices, it is clear that
$q_1(x)=a(x) d(x)+b(x) c(x)$,
$q_2(x)=c(x) d(x)$ and $q_3(x)=-b(x) a(x)$ depend only on $(\alpha,A)$.
We will call
$q_i$, $i=1,2,3$, the {\it coefficients of the derivative of the Lyapunov
exponent}, for reasons that will be clear in a moment.

\begin{lemma}

Let $(\alpha,A) \in \UH$ and let $q_1,q_2,q_3:\R/\Z \to \C$ be the
coefficients of the derivative of the Lyapunov exponent.
Let $w:\R/\Z \to \sl(2,\C)$ be analytic, and write $w=\left (\bm w_1 & w_2
\\ w_3 & -w_1 \em \right )$.  Then
\be
\frac {d} {dt} L(\alpha,A e^{t w})=\Re \int_{\R/\Z} \sum_{i=1}^3 q_i(x) w_i(x)
dx, \quad \text {at} \quad t=0.
\ee

\end{lemma}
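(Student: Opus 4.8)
The plan is to compute the derivative directly from the formula $L(\alpha,A)=\int \Re \ln \lambda(x)\,dx$, where $\lambda$ comes from the diagonalization $B(x+\alpha)^{-1}A(x)B(x)=D(x)$. First I would note that for a perturbation $A_t = Ae^{tw}$, uniform hyperbolicity persists for small $t$ (openness of $\UH$), and the conjugating matrix $B_t$, together with the diagonal cocycle $D_t$, can be chosen to depend analytically (indeed holomorphically in the relevant sense) on $t$; this follows from the smooth/holomorphic dependence of the invariant directions $u,s$ on the cocycle, already invoked in the excerpt. So $L(\alpha,A_t) = \int_{\R/\Z}\Re \ln \lambda_t(x)\,dx$ with $\lambda_t$ analytic in $t$, and differentiating under the integral sign is legitimate.

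Next I would exploit a key simplification: the derivative of $\int \Re\ln\lambda_t$ only sees the derivative of the \emph{diagonal part} of $B_{t}(x+\alpha)^{-1}A_t(x)B_t(x)$ at $t=0$, because $\frac{d}{dt}\ln\lambda_t = \lambda_t^{-1}\dot\lambda_t$ and we only need the $(1,1)$-entry of $\dot D_t D_t^{-1}$ — more precisely, $\frac{d}{dt}\Re\ln\lambda_t(x)\big|_{t=0} = \Re\big[(\dot D_0(x) D_0(x)^{-1})_{11}\big]$. Writing $\dot D_0 = \dot B(x+\alpha)^{-1}A(x)B(x) + B(x+\alpha)^{-1}\dot A(x) B(x) + B(x+\alpha)^{-1}A(x)\dot B(x)$ with $\dot A(x) = A(x)w(x)$, the contributions from the $\dot B$ terms combine, after conjugating by $D(x)$ and using periodicity of the trace-type integral $\int(\cdots)dx$, into something whose $(1,1)$-entry integrates to zero. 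Concretely, I would observe that $\big(B(x+\alpha)^{-1}A(x)\dot B(x) + \dot B(x+\alpha)^{-1}A(x)B(x)\big)D(x)^{-1}$ has the form $D(x)C(x)D(x)^{-1} - C(x+\alpha)$ for $C = B^{-1}\dot B$, whose $(1,1)$-entry is $C_{11}(x) - C_{11}(x+\alpha)$, integrating to $0$ over $\R/\Z$. This is the standard telescoping/cocycle-coboundary cancellation, and it is the conceptual heart of the computation.

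What survives is $\int \Re\big[(B(x+\alpha)^{-1}A(x)w(x)B(x) D(x)^{-1})_{11}\big]dx = \int\Re\big[(D(x)B(x)^{-1}w(x)B(x)D(x)^{-1})_{11}\big]dx = \int \Re\big[(B(x)^{-1}w(x)B(x))_{11}\big]dx$, since conjugation by the diagonal $D(x)$ fixes the $(1,1)$-entry. It then remains to expand $(B^{-1}wB)_{11}$ in terms of the entries of $B=\left(\bm a & b \\ c & d\em\right)$ (so $B^{-1}=\left(\bm d & -b \\ -c & a \em\right)$, using $\det B=1$) and of $w=\left(\bm w_1 & w_2 \\ w_3 & -w_1\em\right)$. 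A direct multiplication gives $(B^{-1}wB)_{11} = (ad+bc)w_1 + cd\,w_2 - ab\,w_3 = q_1 w_1 + q_2 w_2 + q_3 w_3$, matching the claimed coefficients $q_1 = ad+bc$, $q_2 = cd$, $q_3 = -ab$. Summing over the three terms yields the stated formula.

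The main obstacle I anticipate is purely bookkeeping rather than conceptual: making the telescoping cancellation rigorous requires knowing that $C = B^{-1}\dot B$ is a genuine analytic (periodic) function on $\R/\Z$, which hinges on the analytic/holomorphic dependence of $B_t$ on $t$ — and $B$ itself is only defined up to a diagonal gauge $B \mapsto BE$, $E(x)$ diagonal, so one must check that the final formula (and the $q_i$) are gauge-invariant, as the excerpt already asserts for $q_1,q_2,q_3$. Once one verifies that the $\dot B$-contribution is a coboundary (hence integrates to zero) independently of the gauge choice, the rest is the elementary matrix identity above. I would therefore spend the care on the dependence-on-$t$ regularity and the coboundary identity, and treat the final entrywise expansion as routine.
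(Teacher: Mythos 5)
Your proof is correct, and it takes a genuinely different route from the paper's. The paper keeps the conjugating matrix $B$ \emph{fixed} (diagonalizing $A$ but not $A e^{tw}$), defines $D^t(x)=B(x+\alpha)^{-1}A(x)e^{tw(x)}B(x)$, and then computes the derivative first for \emph{rational} $\alpha=p/q$ using the closed form $L(p/q,\cdot)=\tfrac1q\int\ln\rho(\cdot_{(p/q)})\,dx$ together with the spectral-radius derivative identity $\frac{d}{dt}\ln\rho(M(t))=\Re[\text{u.l.c. of }M(0)^{-1}\dot M(0)]$ at a diagonal $M(0)$; the irrational case is then obtained by approximation, using the $C^\infty$ dependence of $L$ on $\UH$. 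You instead let $B_t$ diagonalize $A e^{tw}$ for each small $t$, differentiate $L=\int\Re\ln\lambda_t$ under the integral, and observe that the $\dot B$-contributions produce the coboundary $C_{11}(x)-C_{11}(x+\alpha)$ (with $C=B^{-1}\dot B$) which integrates to zero; what remains is $\int\Re(B^{-1}wB)_{11}$, whose expansion gives the $q_i$. The trade-off: the paper's route entirely avoids differentiating the conjugacy in $t$ (hence sidesteps gauge and $t$-regularity questions), at the cost of routing through rational approximation and the spectral-radius formula; your route is a single direct computation at the irrational frequency, at the cost of needing (and correctly flagging) the smooth/holomorphic dependence of $B_t$ on $t$ and the gauge-invariance of the cancellation. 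Your matrix bookkeeping $(B^{-1}wB)_{11}=(ad+bc)w_1+cd\,w_2-ab\,w_3$ is correct, and your observation that the $q_i$ are invariant under the diagonal gauge $B\mapsto BE$ is what makes the statement well-posed; the coboundary term $C_{11}(x)-C_{11}(x+\alpha)$ is likewise only shifted by another coboundary under a $t$-dependent gauge, so the argument closes. Both proofs are sound; the paper's is arguably more robust because it requires no a priori regularity of $t\mapsto B_t$, while yours is conceptually more transparent about \emph{why} only the $B^{-1}wB$ term survives.
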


\begin{proof}

Write $B(x+p/q)^{-1} A(x) e^{t w(x)} B(x)=D^t(x)$.  We notice that
\be \label {d1}
D(x)^{-1} \frac {d} {dt} D^t(x)=B(x)^{-1} w(x) B(x), \quad \text {at} \quad
t=0,
\ee
and
\be \label {d2}
\sum_{i=1}^3 q_i(x) w_i(x)=\text {u.l.c. of } B(x)^{-1} w(x) B(x),
\ee
where u.l.c. stands for the upper left coefficient.

Suppose first that $\alpha$ is a rational number $p/q$.  Then
\be
\frac {d} {dt} L(p/q,A e^{t w})=\frac {1} {q} \frac {d} {dt} \int_{\R/\Z}
\ln \rho(D^t_{(p/q)}(x)) dx,
\ee
so it is enough to show that
\be \label {d3}
\frac {d} {dt}
\ln \rho(D^t_{(p/q)}(x))=\Re \sum_{j=0}^{q-1}
\sum_{i=1}^3 q_i(x+j p/q) w_i(x+j p/q), \quad \text {at} \quad t=0.
\ee
Since $D_{(p/q)}(x)$ is diagonal and its u.l.c.
has norm bigger than $1$,
\be \label {d4}
\frac {d} {dt} \ln \rho(D^t_{(p/q)}(x))=\Re \text { u.l.c. of }
D_{(p/q)}(x)^{-1} \frac {d} {dt} D^t_{(p/q)}(x), \quad \text {at}
\quad t=0.
\ee
Writing $D_{[j]}(x)=D(x+(j-1) p/q)
\cdots D(x)$, and using (\ref {d1}), we see that
\begin{align} \label {d5}
&D_{(p/q)}(x)^{-1} \frac {d} {dt}
D^t_{(p/q)}(x)\\
\nonumber
&=\sum_{j=0}^{q-1} D_{[j]}(x)^{-1} B(x+j p/q)^{-1} w(x+j p/q) B(x+j p/q)
D_{[j]}(x),
\quad \text {at} \quad t=0.
\end{align}
Since the $D_{[j]}$ are diagonal,
\begin{align} \label {d7}
\text {u.l.c. of }
D_{[j]}(x)^{-1} &B(x+j p/q)^{-1} w(x+j p/q) B(x+j p/q)
D_{[j]}(x)\\
\nonumber
&=\text {u.l.c. of } B(x+j p/q)^{-1} w(x+j p/q) B(x+j p/q).
\end{align}
Putting together (\ref {d2}), (\ref {d4}), (\ref {d5})
and (\ref {d7}), we get (\ref {d3}).

The validity of the formula in the rational case yields the irrational
case by approximation (since the Lyapunov exponent is $C^\infty$ in $\UH$).
\end{proof}

\subsection{Proof of Theorem \ref {uniformly hyperbolic}}

Let $(\alpha,A) \in (\R \setminus \Q) \times
C^\omega(\R/\Z,\SL(2,\C))$ be such that $(\alpha,A)$ is regular.  Then there
exists $\epsilon>0$ such that $L(\alpha,A_\delta)$ is regular for
$|\delta|<\epsilon$.

Fix $0<\epsilon'<\epsilon$.
Choose a sequence $p_n/q_n \to \alpha$.
By Lemma \ref {per}, if $n$ is large then
$(p_n/q_n,A_\delta)$ is uniformly hyperbolic for $\delta<\epsilon'$.
So one can define functions
$u_n(x),s_n(x)$ with values in $\P \C^2$, corresponding to the
eigendirections of $A_{(p_n/q_n)}(x)$ with the largest and smallest
eigenvalues.  Our strategy will be to show that the sequences
$u_n(x)$ and $s_n(x)$ converge uniformly (in a band)
to functions $u(x)$ and $s(x)$.

The coefficients of the derivative of $L(p_n/q_n,A)$ will be denoted
$q^n_i$, $i=1,2,3$.  The basic idea now is that if $q^n_2(x)$ and $q^n_3(x)$
are bounded, then it follows directly from the definitions that
the angle between $u_n(x)$ and $s_n(x)$ is not too small, and this is enough
to guarantee convergence.  On the other
hand, the derivative of the Lyapunov exponent is under control by
pluriharmonicity, which yields the desired bound on the coefficients.

There are various way to proceed here, and we will just do an
estimate of the Fourier coefficients of the $q^n_j$, $j=2,3$.
Write
\be
\zeta_{n,k,j}=\int_{\R/\Z} q^n_j(x) e^{2 \pi i k x} dx.
\ee

\begin{lemma} \label {gam}

There exist $C>0$,
$\gamma>0$ such that for every $n$ sufficiently large,
\be
|\zeta_{n,k,j}| \leq C e^{-\gamma |k|}, \quad j=2,3, \quad k \in \Z.
\ee

\end{lemma}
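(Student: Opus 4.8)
The target is a uniform (in $n$) exponential bound on the Fourier coefficients $\zeta_{n,k,j}$ of the derivative-coefficients $q^n_2, q^n_3$ of the Lyapunov exponent at the periodic approximants $(p_n/q_n, A_{\delta})$. The strategy is to play two bounds against each other: a \emph{lower} bound for the modulus of a Fourier coefficient in terms of how small the invariant directions force a certain conjugating matrix $B_n$ to be near the edge of the band (this comes from the explicit formula $\frac{d}{dt}L(\alpha, Ae^{tw}) = \Re\int \sum q_i w_i$ proved in the previous subsection), and an \emph{upper} bound on the derivative of $L$ coming from pluriharmonicity. First I would set up, for each large $n$, the conjugation $B_n(x+p_n/q_n)^{-1} A^{(n)}_\delta(x) B_n(x) = D_n(x)$ diagonal, with columns along $u_n, s_n$; this $B_n$ is holomorphic on a band $|\Im x| < \epsilon''$ of \emph{fixed} width (independent of $n$), by Lemma~\ref{per}. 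The coefficients $q^n_j$ are then holomorphic on the same band, so their Fourier coefficients decay exponentially with rate governed by that band width — \emph{provided} I have a uniform-in-$n$ sup bound on $\|q^n_j\|$ over the band.

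\textbf{Key steps.} (1) From Lemma~\ref{per}, fix $\epsilon'' < \epsilon' < \epsilon$ so that $(p_n/q_n, A_\delta)$ is uniformly hyperbolic for $|\delta| < \epsilon'$ once $n$ is large, and the normalized spectral radius $\frac{1}{q_n}\ln\rho(A^{(n)}_{(p_n/q_n)}(x))$ converges to $L(\alpha, A_{\Im x})$, uniformly on $|\Im x| < \epsilon''$. This gives holomorphic eigendirections $u_n, s_n$ on $|\Im x| < \epsilon''$ and hence holomorphic $q^n_1, q^n_2, q^n_3$ there. (2) Establish a uniform sup bound $\sup_{|\Im x| < \epsilon''} |q^n_j(x)| \le M$ for all large $n$. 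This is where the separation of $u_n$ and $s_n$ enters: $q^n_2 = c_nd_n$, $q^n_3 = -a_nb_n$ in the entries of $B_n$, and these blow up only if the angle between $u_n$ and $s_n$ collapses. I would argue this angle stays bounded below by using the pluriharmonicity bound on $\partial_w L$: by Proposition~\ref{pluri} (or directly, pluriharmonicity of $L$ restricted to $\UH$ and the Cauchy estimate on a complex disk of perturbations), the derivative $\frac{d}{dt}L(p_n/q_n, A^{(n)} e^{tw})$ is bounded by a constant times $\sup|w|$ on the band, uniformly in $n$. Plugging in $w$ equal to a single Fourier mode in each of the three slots and using the derivative formula, this bounds each $\zeta_{n,k,j}$ by a constant independent of $k$ and $n$. (3) Then the holomorphy on the fixed band $|\Im x| < \epsilon''$ upgrades the uniform-in-$k$ bound from step (2) to the exponential bound: standard contour-shift / Paley–Wiener estimate, $|\zeta_{n,k,j}| \le (\sup_{|\Im x| = \epsilon'''} |q^n_j|)\, e^{-2\pi|k|\epsilon'''}$ for any $\epsilon''' < \epsilon''$, and the sup is uniformly bounded by step (2). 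Taking $\gamma = 2\pi\epsilon'''$ and $C$ the uniform sup finishes it.

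\textbf{Main obstacle.} The delicate point is step (2): getting the uniform-in-$n$ sup bound on the $q^n_j$ over a \emph{fixed} band, i.e. controlling the geometry of the invariant splitting of the periodic approximants uniformly. There is a potential circularity — one wants the angle between $u_n$ and $s_n$ bounded below to control $q^n_j$, but a priori that angle could degenerate as $n \to \infty$ (indeed the whole point of Theorem~\ref{uniformly hyperbolic} is in doubt precisely when it does). The resolution is that the pluriharmonic/macroscopic bound on $\partial_w L$ does \emph{not} see the microscopic angle: it is a bound on an integral of $q^n_j$ against test functions, and by choosing the test functions to be pure Fourier modes we extract bounds on individual Fourier coefficients $\zeta_{n,k,j}$ \emph{directly}, without first bounding $\sup |q^n_j|$. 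So I would reorganize: prove the uniform bound on $|\zeta_{n,k,j}|$ (uniform in both $k$ and $n$) \emph{first}, purely from the derivative formula plus the pluriharmonicity Cauchy estimate; this already gives $q^n_j \in L^2$ with uniformly bounded norm, hence (being holomorphic on the fixed band) uniformly bounded sup on slightly smaller bands; and then the exponential decay in $k$ is automatic. The derivative formula $\frac{d}{dt}L(\alpha, Ae^{tw}) = \Re\int\sum q_i w_i$ must be applied with $w$ supported in the off-diagonal slots ($w_2, w_3$) and equal to $e^{-2\pi i k x}$ there, combined with the fact that these are perturbations within $C^\omega_\epsilon(\R/\Z, \SL(2,\C))$ of uniformly bounded norm, to which the pluriharmonicity-derived derivative bound applies uniformly in $n$.
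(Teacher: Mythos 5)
Your overall strategy is the same as the paper's — play the explicit derivative formula $\frac{d}{dt}L = \Re\int\sum q_i w_i$ against the harmonicity bound on $t\mapsto L(p_n/q_n, Ae^{tw})$, and you correctly identify that this sidesteps the circularity of first bounding the angle between $u_n,s_n$. But two of the specific assertions in your reorganized step (2) are wrong, and together they constitute a genuine gap.

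First, the claim that the test perturbations $w$ with $e^{-2\pi i k x}$ in an off-diagonal slot ``are perturbations within $C^\omega_\epsilon(\R/\Z,\SL(2,\C))$ of uniformly bounded norm'' is false: $\sup_{|\Im z|<\eta}|e^{-2\pi i k z}| = e^{2\pi|k|\eta}$, which blows up as $|k|\to\infty$. Consequently the Cauchy/harmonicity estimate does not give $|\zeta_{n,k,j}|\le C$; it gives $|\zeta_{n,k,j}|\le C\,e^{2\pi|k|\eta}$ for any $\eta$ less than the band width (the admissible radius $r$ in the complex $t$-disk must shrink like $e^{-2\pi|k|\eta}$ to keep $Ae^{tw}$ in the uniformly hyperbolic region). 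This is a growing bound, which is useless. Second, even if you did have $|\zeta_{n,k,j}|\le C$ uniformly in $k$, the inference ``this already gives $q^n_j\in L^2$ with uniformly bounded norm'' is false: uniformly bounded Fourier coefficients do not imply square-summability.

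The ingredient you are missing is a shift. To extract a \emph{decaying} bound rather than a growing one, one must test the exponentially amplified coefficient $e^{\gamma|k|}\zeta_{n,k,j}$ rather than $\zeta_{n,k,j}$ itself, using the test function $w_2(x)=e^{\gamma|k|}e^{2\pi i k x}$ (say), and, crucially, evaluate the derivative not at $(p_n/q_n, A)$ but at the shifted cocycle $(p_n/q_n,\tilde A)$ with $\tilde A(x)=A(x+i\gamma'/2\pi)$, $\gamma<\gamma'<2\pi\epsilon'$, against the correspondingly shifted test function $\tilde w(x)=w(x+i\gamma'/2\pi)$. Since the derivative coefficients at $\tilde A$ are just $q^n_j(\cdot+i\gamma'/2\pi)$, a contour shift shows $\frac{d}{dt}L(p_n/q_n,\tilde A e^{t\tilde w})=e^{\gamma|k|}|\zeta_{n,k,j}|$ up to phase, while the shift produces a factor $e^{\mp\gamma' k}$ in $|\tilde w|$ that cancels the amplification $e^{\gamma|k|}$ on a band of width $(\gamma'-\gamma)/2\pi$ (for one sign of $k$; for the other sign shift in the opposite imaginary direction). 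So $\tilde w$ \emph{is} uniformly bounded, Lemma~\ref{per} gives a uniform radius $r$ of uniform hyperbolicity for $|t|<r$, and the harmonicity bound is uniform. Running this as a contradiction argument (if $|\zeta_{n_l,k_l,j_l}|>le^{-\gamma|k_l|}$, the derivative exceeds $l$) gives the lemma. Without the shift there is no way to turn the macroscopic derivative bound into exponential decay of Fourier coefficients.
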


\begin{proof}

Choose $0<\gamma<2 \pi \epsilon'$.  Then for each fixed
$n$ large we have
$|\zeta_{n,k,j} \leq C_n e^{-\gamma |k|}$ (since $q^n_j$ extend to $|\Im
z|<\epsilon'$).
If the result did not hold, then
there would exist $n_l \to \infty$, $k_l \in \Z$, $j_l=2,3$ such that
$|\zeta_{n_l,k_l,j_l}|>l e^{-\gamma |k_l|}$.
We may assume that $j_l$ is a constant and
either $k_l>0$ for all $l$ or $k_l \leq 0$ for all $l$.

For simplicity, we will assume that $j_l=2$ and
$k_l \leq 0$ for all $l$.  Let
\be
w_{(l)}(x)=\frac {|\zeta_{n_l,k_l,2}|}
{\zeta_{n_l,k_l,2}} e^{\gamma |k_l|}
\left (\bm 0&e^{2 \pi i k_l x}
\\ 0 & 0 \em \right ).
\ee
Choose $\gamma<\gamma'<2 \pi \epsilon'$.
Setting $\tilde A(x)=A(x+i \gamma'/2 \pi)$ and
$\tilde w_{(l)}(x)=w_{(l)}(x+i \gamma'/2\pi)$, we get
\be \label {l}
\frac {d} {dt} L(p_{n_l}/q_{n_l},\tilde A e^{t \tilde w_{(l)}})=
e^{\gamma |k_l|} |\zeta_{n_l,k_l,2}| \geq l,
\ee
since the coefficients of the derivative at $(p_{n_l}/q_{n_l},\tilde A)$ are
$\tilde q^{n_l}_j(x)=q^{n_l}_j(x+i\gamma'/2 \pi)$.

Notice that $\tilde w_{(l)}$ admits a holomorphic extension bounded by
$1$ on $|\Im z|<(\gamma'-\gamma)/2 \pi$.  Since $(\alpha,\tilde A)$ is
regular with positive Lyapunov exponent, it follows from Lemma \ref {per},
that the exists $r>0$ such that for every $l$ large $(p_{n_l}/q_{n_l},\tilde
A e^{t \tilde w_{(l)}})$ is uniformly hyperbolic for complex $t$ with
$|t|<r$.  In particular, the functions
$t \mapsto L(p_{n_l}/q_{n_l},\tilde A e^{t \tilde w_{(l)}})$ are harmonic on
$|t|<r$ for $l$ large.  Those functions are also clearly uniformly bounded. 
Harmonicity gives then that the derivative at $t=0$ is uniformly bounded as
well.  This contradicts (\ref {l}).
\end{proof}

\begin{lemma} \label {ang}

If $a,b,c,d \in \C$ are such that $ad-bc=1$, and the angle between the
complex lines through $\left (\bm a\\c \em \right )$ and
$\left (\bm b \\ d \em \right )$ is small, then $\max \{|ab|,|cd|\}$ is
large.

\end{lemma}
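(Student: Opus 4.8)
The plan is to reduce the bound to the Lagrange (Gram) identity connecting the Hermitian inner product on $\C^2$ with the determinant. Put $u=(a,c)$ and $v=(b,d)$, regarded as vectors in $\C^2$; since $ad-bc=1\neq0$, neither vector is zero. The first step I would record is the elementary identity
\be
\|u\|^2\,\|v\|^2-|\langle u,v\rangle|^2=|ad-bc|^2=1,
\ee
where $\langle u,v\rangle=a\bar b+c\bar d$ and $\|\cdot\|$ is the standard Hermitian norm; this is just $|z|^2+|w|^2-z\bar w-\bar z w=|z-w|^2$ applied with $z=ad$ and $w=bc$.

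Next I would turn ``small angle'' into a quantitative statement. Let $\theta\in[0,\pi/2]$ denote the angle between the complex lines through $u$ and $v$, so that $\cos\theta=|\langle u,v\rangle|/(\|u\|\,\|v\|)$; the identity above then gives $\sin\theta=1/(\|u\|\,\|v\|)$. In particular $\|u\|\,\|v\|=1/\sin\theta$ is large when $\theta$ is small, and
\be
|\langle u,v\rangle|=\|u\|\,\|v\|\,\cos\theta=\cot\theta .
\ee

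Finally, by the triangle inequality $|\langle u,v\rangle|=|a\bar b+c\bar d|\le|ab|+|cd|\le 2\max\{|ab|,|cd|\}$, so combining with the previous display I would conclude
\be
\max\{|ab|,|cd|\}\ \ge\ \tfrac12\,|\langle u,v\rangle|\ =\ \tfrac12\cot\theta ,
\ee
which tends to $+\infty$ as $\theta\to0$; this proves the lemma, in fact with the explicit lower bound $\tfrac12\cot\theta$.

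I do not expect a genuine obstacle here. The only point requiring a little care is pinning down the normalization of ``angle between complex lines'' so that $\sin\theta=1/(\|u\|\,\|v\|)$ holds on the nose; any standard convention works (the principal angle, equivalently $\cos\theta=|\langle\hat u,\hat v\rangle|$ for unit representatives, or the Fubini--Study distance), and the argument is robust in any case, since all that is used is that ``$\theta$ small'' forces both $\|u\|\,\|v\|$ to be large and $|\langle u,v\rangle|/(\|u\|\,\|v\|)$ to be close to $1$, hence $|\langle u,v\rangle|$ large and therefore $\max\{|ab|,|cd|\}$ large.
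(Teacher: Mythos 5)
Your proof is correct and complete. The paper's own proof is just the phrase ``Straightforward computation,'' so there is nothing to compare approaches against; your argument via the Lagrange identity $\|u\|^2\|v\|^2-|\langle u,v\rangle|^2=|ad-bc|^2$, the resulting relation $\sin\theta=1/(\|u\|\,\|v\|)$, and the triangle inequality bound $\max\{|ab|,|cd|\}\ge\tfrac12\cot\theta$ is exactly the clean way to carry out that computation, and it also supplies an explicit quantitative rate that the paper leaves implicit.
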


\begin{proof}

Straightforward computation.
\end{proof}

Lemma \ref {gam} implies that there exists $\gamma>0$ such that $q^n_2$ and
$q^n_3$ are uniformly bounded, as $n \to \infty$, on $|\Im x|<\gamma$.  By
Lemma \ref {ang}, this implies that there exists $\eta>0$ such that the angle
between $u_n(x)$ and $s_n(x)$ is at least $\eta$, for every $n$ large and
$|\Im x|<\gamma$.  We are in position to apply a
normality argument.

\begin{lemma}

Let $u_n(x)$ and $s_n(x)$ be holomorphic functions defined in some complex
manifold, with values in the
$\P\C^2$.  If the angle between $u_n(x)$ and $s_n(x)$ is bounded away from
$0$ for every $x$ and $n$, then $u_n(x)$ and $s_n(x)$ form normal families,
and limits of $u_n$ and of $s_n$ (taken along the same subsequence) are
holomorphic functions such that $u(x) \neq s(x)$ for every $x$.

\end{lemma}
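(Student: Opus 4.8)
The statement is a normality assertion for families of holomorphic maps into $\P\C^2$, with the crucial extra information that pairwise angles stay bounded away from zero. My plan is to reduce the question to normality of scalar holomorphic functions via a coordinate trick and then apply Montel. First I would observe that $\P\C^2$ is compact, so by Montel's theorem for maps into a compact complex manifold, the families $\{u_n\}$ and $\{s_n\}$ are automatically normal; any limit $u$ (resp.\ $s$), taken along a subsequence, is a holomorphic map into $\P\C^2$. So the only real content is to show that for such limits $u(x) \neq s(x)$ for every $x$. The hard part, then, is purely the transfer of the angle bound from the approximants to the limit.

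\textbf{Key steps.} The angle between two points $p,q \in \P\C^2$ is a continuous function on $\P\C^2 \times \P\C^2$ (for instance, $\operatorname{dist}(p,q)$ in the Fubini--Study metric, or the Hermitian angle between representative unit vectors, which is well defined up to orientation and continuous away from nothing). Denote this continuous function by $\theta(p,q)$, and let $\eta > 0$ be the uniform lower bound in the hypothesis: $\theta(u_n(x), s_n(x)) \geq \eta$ for all $x$ and all $n$. Fix $x$ in the manifold. Pass to a subsequence along which both $u_n(x) \to u(x)$ and $s_n(x) \to s(x)$ (possible by normality, after a diagonal argument if one wants a single subsequence working for a countable dense set, though pointwise it is immediate). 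By continuity of $\theta$, $\theta(u(x), s(x)) = \lim_n \theta(u_n(x), s_n(x)) \geq \eta > 0$. Hence $u(x)$ and $s(x)$ are distinct points of $\P\C^2$, in fact uniformly transverse. Since $x$ was arbitrary, $u(x) \neq s(x)$ for every $x$, and moreover the limiting angle bound $\theta(u(x),s(x)) \geq \eta$ persists, which is what one actually wants to feed into the uniform hyperbolicity argument.

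\textbf{Main obstacle.} There is essentially no serious obstacle here; the lemma is a soft compactness statement. The one point requiring a word of care is that convergence in $\P\C^2$ is the natural one (uniform on compact subsets of the manifold, in the Fubini--Study metric), and that one should phrase the angle condition intrinsically on $\P\C^2$ rather than in terms of chosen lifts to $\C^2$, so that the limit statement is meaningful; once that is set up, continuity of $\theta$ does all the work. If one prefers to avoid invoking Montel for target $\P\C^2$ directly, an alternative is to work locally: near a point $x_0$ where, say, $s(x_0)$ is not the vertical line, one can choose holomorphic lifts of $u_n$ normalized so that the second coordinate of the lift of $s_n$ equals $1$; the angle bound combined with $\det = 1$ (via Lemma \ref{ang}, which bounds the entries $a_n b_n$ and $c_n d_n$) forces the lifts to be locally uniformly bounded, so ordinary Montel for $\C$-valued functions applies, and the limiting lifts again have nonzero angle by continuity. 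Either route closes the argument; I would present the first as it is shortest. \qed
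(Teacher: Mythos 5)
Your main route rests on a false premise: compactness of the target does \emph{not} by itself give normality of holomorphic families. The classical counterexample $f_n(z)=nz$ (or $e^{nz}$) is a sequence of holomorphic maps from the disk into the compact Riemann sphere $\P\C^2\cong\widehat{\C}$ that is not normal --- its spherical derivative at $0$ blows up, violating Marty's criterion, and the pointwise limit is discontinuous. Montel's theorem for sphere-valued maps requires an omitted set of values (three points, or an open set), not just compactness. So the sentence ``by Montel's theorem for maps into a compact complex manifold, the families $\{u_n\}$ and $\{s_n\}$ are automatically normal'' is the gap, and everything after it (passing to subsequences, continuity of the angle) presupposes the normality you have not established. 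Note also that $u_n$ and $s_n$ individually need not omit any values on the sphere, so you cannot apply Montel to them directly; the angle hypothesis is essential for normality itself, not merely for transferring the separation to the limit.

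The paper's actual argument is exactly the device needed to make Montel applicable: identify $\P\C^2$ with $\widehat{\C}$ and form the \emph{ratio} $\phi_n=u_n/s_n$. The uniform lower bound on the angle between $u_n(x)$ and $s_n(x)$ is equivalent to $\phi_n$ staying outside a fixed spherical neighborhood of $1$; since $\phi_n$ omits an open set, Montel's fundamental normality test applies to $\{\phi_n\}$. One then passes to a subsequence along which $\phi_n$ converges and bootstraps: locally, either $\lim\phi_n(z)\neq\infty$, in which case $\phi_n$ and the angle bound together force $u_n$ and $1/s_n$ to be uniformly bounded nearby, or $\lim\phi_n(z)=\infty$, in which case $s_n$ and $1/u_n$ are bounded; ordinary (scalar) Montel then gives normality of $u_n$ and $s_n$. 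The separation of the limits follows as you say by continuity of the chordal/angular distance. Your ``alternative route'' at the end is also problematic: it refers to the limit $s(x_0)$ before its existence has been established, and it invokes Lemma~\ref{ang}, which is a statement about $\SL(2,\C)$ matrices, whereas the lemma at hand has no $\SL(2,\C)$ normalization assumed on $u_n,s_n$. You should replace the appeal to ``automatic normality from compactness'' with the ratio trick (or with Marty's criterion after deriving a spherical-derivative bound from the angle condition, which amounts to the same thing).
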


\begin{proof}

We may identify $\P\C^2$ with the Riemann Sphere.
Write $\phi_n(x)=u_n(x)/s_n(x)$.  Then $\phi_n(x)$ avoids a neighborhood of
$1$, hence it forms a normal family.  Let us now take a sequence along which
$\phi_n$ converges, and let us show $u_n(x)$ and $s_n(x)$ form normal
families.  This is a local problem, so we may work in a neighborhood of a
point $z$.  If $\lim \phi_n(z) \neq \infty$, then
for every $n$ large $\phi_n$ must be bounded (uniformly in a neighborhood of
$z$), so $u_n$ and $1/s_n$ must also be bounded.
If $\lim \phi_n(z)=\infty$, then
for every $n$ large $1/\phi_n$ must be bounded (uniformly in a neighborhood
of $z$), so $s_n$ and $1/u_n$ must be bounded.  In either case we conclude
that $s_n$ and $u_n$ are normal in a neighborhood of $z$.

The last statement is obvious by pointwise convergence.
\end{proof}

Let $u(x)$ and $s(x)$ be limits of $u_n(x)$ and $s_n(x)$ over $|\Im
x|<\gamma$, taken along the same subsequence.  Then $A(x) \cdot
u(x)=u(x+\alpha)$, $A(x) \cdot s(x)=s(x+\alpha)$ and $u(x) \neq s(x)$. 
Since $\alpha \in \R \setminus \Q$ and
$L(\alpha,A)>0$, this easily implies that $(\alpha,A) \in \UH$.
\qed

\section{Local non-triviality of the Lyapunov function in strata: Proof of
Theorem \ref {cod1}} \label {loc}

Let $\delta,j,v_*$ be as in the statement of Theorem \ref {cod1}.
Notice that $(\alpha,A^{(v_*)}) \notin \UH$, since otherwise we would have
$j=\omega(\alpha,A^{(v_*)})=0$.

Let $0<\epsilon_0<\delta$ be such that
$(\alpha,A^{(v_*)}_\epsilon) \in \UH$ and
$\omega(\alpha,A^{(v_*)}_\epsilon)=j$ for $0<\epsilon<\epsilon_0$.
By definition, for every $0<\epsilon<\epsilon_0$, we have
$L_{\delta,j}(\alpha,A)=L(\alpha,A_\epsilon)-2 \pi j \epsilon$ for $v$ in a
neighborhood of $v_*$.

Let $u,s:\{0<\Im x<\epsilon_0\} \to \P\C^2$ be
such that $x \mapsto u(x+i \epsilon)$ and $x \mapsto
s(x+i \epsilon)$ are the unstable and
stable directions of $(\alpha,A^{(v_*)}_\epsilon)$, and let
$q_1,q_2,q_3:\{0<\Im x<\epsilon_0\} \to \C$
be such that $x \mapsto q_j(x+i \epsilon)$ is the
$j$-th coefficient of the derivative of $(\alpha,A^{(v_*)}_\epsilon)$.
Due to the Schr\"odinger form, it is immediate to check that
$q_2(x)=-q_3(x-\alpha)$.

Notice that $A^{(v_*+w)}=A^{(v_*)} e^{\tilde w}$, where $\tilde w(x)=\left
(\bm 0 & 0 \\ -w(x) & 0 \em \right )$.
Thus the derivative of $w \mapsto
L_{\delta,j}(\alpha,A^{(v_*+t w)})$ at $t=0$ is
\be \label {q_3}
\Re \int_{\R/\Z} -w(x+i \epsilon) q_3(x+i \epsilon) dx.
\ee

If the result does not hold,
then (\ref {q_3}) must vanish for every $w \in C^\omega_\delta(\R/\Z,\R)$.
Testing this with $w$ of the form $a \cos 2 \pi k x+b \sin 2 \pi k x$, $a,b
\in \R$, $k \in \Z$, we see that the $k$-th Fourier coefficient of $q_3$ must
be minus the complex conjugate of the $-k$-th Fourier coefficient of $q_3$ for
every $k \in \Z$.  Since the Fourier series converges for $0<\Im
x<\epsilon_0$, this implies that it actually converges for $|\Im
x|<\epsilon_0$, and at $\R/\Z$ it defines a purely imaginary function.
Thus $q_3(x)$ extends analytically through $|\Im x|=0$, and hence
$q_2(x)=c(x)d(x)=a(x-\alpha)b(x-\alpha)=-q_3(x-\alpha)$ (the middle equality
holding due to the Schr\"odinger form) also does.

Identifying $\P \C^2$ with the Riemann sphere in the usual way (the line
through $\left (\bm z \\ w \em \right )$ corresponding to $z/w$), we get
$q_2=\frac {1} {u-s}$ and $q_3=\frac {us} {u-s}$.  These formulas allow us
to analytically continuate $u$ and $s$ through $\Im x=0$.  Since $q_2$ and
$q_3$ are purely imaginary at $\Im x=0$, we conclude that $u$ and $s$ are
complex conjugate directions in $\P \C^2$, and since they are distinct they
are also non-real.

Let $B(x) \in \SL(2,\R)$ be the unique upper triangular matrix taking the
pair $u(x)$ and $s(x)$ to $\left (\bm \pm i \\ 1 \em \right )$.  Then
$B:\R/\Z \to \SL(2,\R)$ is analytic.  Define
$A(x)=B(x+\alpha) A^{(v_*)}(x) B(x)^{-1}$.  Since $A^{(v_*)}(x)$ takes
$u(x)$ and $s(x)$ to $u(x+\alpha)$ and $s(x+\alpha)$, we conclude that
$B(x+\alpha) A(x) B(x)^{-1} \in \SO(2,\R)$.
Since $x \mapsto A^{(v_*)}(x)$
is homotopic to a constant as a function $\R/\Z \to
\SL(2,\R)$, $x \mapsto B(x+\alpha) A^{(v_*)}(x) B(x)^{-1}
\in \SL(2,\R)$ is homotopic to a constant as a function
$\R/\Z \to \SL(2,\R)$, thus also as a function $\R/\Z \to \SO(2,\R)$.
It follows that there exists an analytic function $\phi:\R/\Z \to \R$ such that
$B(x+\alpha) A(x) B(x)^{-1}=A(x)$, where $A(x)$
is the rotation of angle $2 \pi \phi(x)$.

Obviously this relation implies that
$L(\alpha,A^{(v_*)}_\epsilon=L(\alpha,A_\epsilon)$ for $\epsilon>0$ small. 
If we show that $L(\alpha,A_\epsilon)=0$ for $\epsilon>0$ small,
we will conclude that $\omega(\alpha,A^{(v_*)})=0$, contradicting
the hypothesis.

To see that $L(\alpha,A_\epsilon)=0$, notice that for $n \geq 1$,
$A_n(x)$ is the rotation
of angle $\sum_{k=0}^{n-1} \phi(x+k \alpha)$.  Thus
\be
\frac {1} {n} \int_{\R/\Z}
\ln \|A_n(x+i \epsilon)\| dx=2 \pi \int_{\R/\Z} \frac {1} {n}
|\sum_{k=0}^{n-1} \Im \phi(x+k \alpha+i \epsilon)| dx.
\ee
Since $x \mapsto x+\alpha$ is ergodic with respect to Lebesgue measure, the
integrand of the right hand side converges uniformly,
as $n \to \infty$, to $|\int_{\R/\Z} \Im \phi(x+i \epsilon)
dx|=|\int_{\R/\Z} \Im \phi(x) dx|=0$.  Thus the limit of the right hand
side, which is $L(\alpha,A_\epsilon)$ by definition, is zero as well.\qed

\begin{rem} \label {ak2}

The analysis of the function $A \mapsto L_{\delta,j}(\alpha,A)$ on
$C^\omega_\delta(\R/\Z,\SL(2,\R)) \to \R$, with $\alpha \in \R \setminus
\Q$ and near some $A^*$ with $\omega(\alpha,A^*)>0$ can be carried out as
above with one important difference.

The argument above does allow one to establish that if $L_{\delta,j}$ is not
a local submersion, then the coefficients of the derivative $q_2$ and $q_3$
extend from some half band $0<\Im x<\epsilon_0$ to a full band $|\Im
x|<\epsilon_0$.\footnote {Though one lacks the symmetry between $q_2$ and $q_3$
exploited above, we just separately evaluate the extensions of
$q_2$ and $q_3$, since we are not constrained to consider just perturbations
of a specific form.}  This again leads to the conclusion that there exists
$B:\R/\Z \to \SL(2,\R)$ analytic such that
$A(x)=B(x+\alpha) A^*(x) B(x)^{-1}$ takes values in $\SO(2,\R)$.  But now
there are two cases.
\begin{enumerate}
\item $x \mapsto A^*(x)$ is homotopic to a constant.  In this case, the
above argument goes through and one concludes that $\omega(\alpha,A^*)=0$,
contradiction.
\item $x \mapsto A^*(x)$ is not homotopic to a constant.  In this case,
there is no contradiction, and the reader is invited to check that if
$A^*(x)$ is the rotation of angle $2 \pi x$ then indeed the derivative of
$L_{\delta,j}$ vanishes, though $\omega(\alpha,A^*)=1$.
\end{enumerate}

The analysis of the second case has been carried out
by different means in \cite {AK2}, where it is shown that the Lyapunov
exponent is real analytic near cocycles with values in $\SO(2,\R)$ provided
they are not homotopic to a constant.  We should emphasize that
this result is obtained for any number of frequencies, which is certainly
beyond the scope of the techniques we develop in this paper.

Interpreting their results (in the one-frequency case)
from our new point of view, \cite {AK2} shows
that all real perturbations of $(\alpha,A^*)$ have the same acceleration
(the absolute value of the
topological degree of $A^*$ as a map $\R/\Z \to \SL(2,\R)$).  Real
analyticity implies that the derivative
of the Lyapunov exponent then is forced to vanish whenever the Lyapunov
exponent is zero.  In \cite {AK2} it is shown that the second derivative is
non-zero.  The locus of zero exponents can be shown to intersect a
neighborhood of $A^*$ in $C^\omega_\delta(\R/\Z,\SL(2,\R))$
in an analytic submanifold of
codimension $4 |\omega(\alpha,A^*)|$.

Thus our result implies that among cocycles non-homotopic to constants (and
with a given irrational frequency), the locus of
zero exponents is contained in a countable union of positive codimension
submanifolds of $C^\omega_\delta(\R/\Z,\SL(2,\R))$.

\end{rem}

\appendix

\section{Some almost Mathieu computations}

Through this section, we let $v(x)=2 \cos 2 \pi x$.

\begin{thm} \label {am1}

If $\alpha \in \R \setminus \Q$, $\lambda>0$, $E \in \R$ and $\epsilon \geq
0$ then $L(\alpha,A^{(E-\lambda v)}_\epsilon)=\max
\{L(\alpha,A^{(E-\lambda v)}),(\ln \lambda)+2 \pi \epsilon\}$, $\epsilon \geq 0$.

\end{thm}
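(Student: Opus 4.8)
The plan is to determine the whole function $\epsilon\mapsto L(\epsilon):=L(\alpha,A^{(E-\lambda v)}_\epsilon)$ from three ingredients and then read off the formula. First, $\epsilon\mapsto L(\epsilon)$ is convex (the general fact recalled just before Theorem \ref{quantized}) and even, since $A^{(E-\lambda v)}$ is $\SL(2,\R)$-valued and $\alpha\in\R\setminus\Q$ (Remark \ref{omega real}); a convex even function is nondecreasing on $[0,+\infty)$, so $L(\epsilon)\ge L(0)=L(\alpha,A^{(E-\lambda v)})$ for $\epsilon\ge0$. Second, by quantization (Theorem \ref{quantized}) $L$ is piecewise affine with all slopes in $2\pi\Z$. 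Third --- the only point where the specific structure of the almost Mathieu cocycle enters --- one has $L(\epsilon)\ge\ln\lambda+2\pi\epsilon$ for every $\epsilon\ge0$ and $L(\epsilon)=\ln\lambda+2\pi\epsilon+o(1)$ as $\epsilon\to+\infty$.

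For the lower bound I would use Herman's subharmonicity argument. In each factor of $A_n(x)$ only the $(1,1)$-entry depends on $x$, so the $(1,1)$-entry of $A_n(x)$ is a Laurent polynomial in $w=e^{2\pi i x}$ supported in degrees $-n,\dots,n$, and isolating the extreme monomials shows that the coefficients of $w^{n}$ and $w^{-n}$ both have modulus $\lambda^{n}$. Hence $P(w):=w^{n}\bigl((1,1)\text{-entry of }A_n\bigr)$ is a polynomial of degree $2n$ with leading and constant coefficients of modulus $\lambda^{n}$, so the product of the moduli of its zeros is $1$, and Jensen's formula gives $\int_{\R/\Z}\ln|P(re^{2\pi i x})|\,dx=n\ln\lambda+\sum_k\max\{\ln r,\ln|z_k|\}\ge n\ln\lambda$ for $0<r\le1$. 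Since $\|A_n(x+i\epsilon)\|\ge\bigl|(1,1)\text{-entry of }A_n(x+i\epsilon)\bigr|=e^{2\pi n\epsilon}|P(e^{2\pi i x}e^{-2\pi\epsilon})|$, taking $r=e^{-2\pi\epsilon}$, integrating over $\R/\Z$, dividing by $n$ and letting $n\to\infty$ gives $L(\epsilon)\ge\ln\lambda+2\pi\epsilon$. Together with the first paragraph, $L(\epsilon)\ge\max\{L(0),\ln\lambda+2\pi\epsilon\}$ for all $\epsilon\ge0$.

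For the matching upper asymptotics I would rescale: from $2\cos2\pi(x+i\epsilon)=e^{2\pi i x}e^{-2\pi\epsilon}+e^{-2\pi i x}e^{2\pi\epsilon}$ one checks that $\lambda^{-1}e^{-2\pi\epsilon}A^{(E-\lambda v)}_\epsilon(x)$ converges, uniformly on $\R/\Z$ as $\epsilon\to+\infty$, to the matrix with $(1,1)$-entry $-e^{-2\pi i x}$ and all other entries $0$, which has operator norm $1$. Hence $\int_{\R/\Z}\ln\|A^{(E-\lambda v)}_\epsilon(x)\|\,dx=\ln\lambda+2\pi\epsilon+o(1)$, and since $L(\epsilon)\le\int_{\R/\Z}\ln\|A^{(E-\lambda v)}_\epsilon(x)\|\,dx$ by subadditivity, $L(\epsilon)\le\ln\lambda+2\pi\epsilon+o(1)$. (Alternatively, one could prove uniform hyperbolicity of $(\alpha,A^{(E-\lambda v)}_\epsilon)$ for $\epsilon$ large by an invariant-cone argument and read the exponent off the unstable direction, but the norm estimate is cleaner.)

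To finish, put $g(\epsilon):=L(\epsilon)-\ln\lambda-2\pi\epsilon$: this is convex, nonnegative, and $o(1)$ at $+\infty$, hence nonincreasing, and by quantization piecewise affine with slopes in $2\pi\Z$, so it must vanish identically on $[\epsilon_1,+\infty)$ for some $\epsilon_1$, i.e.\ $L(\epsilon)=\ln\lambda+2\pi\epsilon$ for $\epsilon\ge\epsilon_1$. On $[0,\epsilon_1)$ the right derivative of $L$ is nondecreasing, takes values in $2\pi\Z$, is $\ge0$ (as $L$ is nondecreasing on $[0,+\infty)$), and is $\le2\pi$ (its value at $\epsilon_1$); hence it is $0$ on some interval $[0,\epsilon_*)$ and $2\pi$ on $[\epsilon_*,+\infty)$. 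Thus $L\equiv L(0)$ on $[0,\epsilon_*]$ and $L(\epsilon)=L(0)+2\pi(\epsilon-\epsilon_*)$ for $\epsilon\ge\epsilon_*$; matching with $\ln\lambda+2\pi\epsilon$ for large $\epsilon$ forces $L(0)=\ln\lambda+2\pi\epsilon_*$, whence $L(\epsilon)=\max\{L(0),\ln\lambda+2\pi\epsilon\}$ for every $\epsilon\ge0$ (and, incidentally, $L(\alpha,A^{(E-\lambda v)})\ge\ln\lambda$). The main obstacle is the lower bound in the second paragraph; quantization is also essential in this last step, since a convex even function satisfying all the displayed bounds but not piecewise affine need not have this ``hockey-stick'' profile.
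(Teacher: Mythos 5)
Your proof is correct, and it takes a genuinely different (and more elementary) route to the key asymptotics than the paper does. The paper obtains $L(\alpha,A^{(E-\lambda v)}_\epsilon)=\ln\lambda+2\pi\epsilon+o(1)$ as $\epsilon\to+\infty$ by a direct dynamical (invariant-cone) computation: for $\epsilon$ large the cocycle preserves a thin cone around the horizontal direction and expands it by a factor $\approx\lambda e^{2\pi\epsilon}$, giving both the upper and lower asymptotics at once. You instead split the two bounds: for the lower bound you use Herman's subharmonicity/Jensen trick on the $(1,1)$-entry of $A_n$, which has the advantage of yielding the sharp inequality $L(\epsilon)\geq\ln\lambda+2\pi\epsilon$ for \emph{all} $\epsilon\geq0$ (and in particular $L(0)\geq\ln\lambda$ directly, one half of the Aubry--Andr\'e formula, as you note); for the upper bound you use the crude estimate $L(\epsilon)\leq\int\ln\|A_\epsilon\|$ together with a rescaling limit. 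Once the asymptotics are in hand, the bookkeeping --- evenness, convexity, and quantization of the slope to $2\pi\Z$ forcing the ``hockey-stick'' shape --- is essentially identical to the paper's (the paper phrases it as $\omega\in\{0,1\}$, you phrase it through the auxiliary convex function $g$). Both approaches are sound; the paper's cone argument is closer to the dynamics, while your Jensen argument leans on the algebraic structure of the transfer-matrix entries and produces a slightly stronger intermediate statement (the global lower bound) that, as you correctly observe, is not strictly needed once quantization is invoked.
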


\begin{proof}

A direct computation shows that
if $E$ and $\lambda$ are fixed then for every $\delta>0$, there exists
$0<\xi<\pi/2$ such that if $\epsilon$ is large and $w \in \C^2$
makes angle at most $\xi$ with the horizontal line then for every $x \in
\R/\Z$,
$w'=A^{(E-\lambda v)}(x+\epsilon i) \cdot w$ makes angle at most $\xi/2$ with
the horizontal line and $|\ln \|w'\|-(\ln \lambda+2 \pi
\epsilon)|<\delta$.

This implies that $L(\alpha,A^{(E-\lambda
v)}_\epsilon)=2 \pi \epsilon+\ln \lambda+o(1)$ as $\epsilon \to \infty$.
By quantization of acceleration, for every $\epsilon$ sufficiently large,
$\omega(\alpha,A^{(E-\lambda v)}_\epsilon)=1$ and
$L(\alpha,A^{(E-\lambda v)}_\epsilon)=2 \pi \epsilon+\ln \lambda$.
By real-symmetry,
$\omega(\alpha,A^{(E-\lambda v)}_\epsilon)$ is either $0$ or $1$ for
$\epsilon \geq 0$.  This implies the given formula for
$L(\alpha,A^{(E-\lambda v)}_\epsilon)$.
\end{proof}

For completeness, let us give a contrived rederivation of
the Aubry-Andr\'e formula.

\begin{cor}[\cite {BJ1}] \label {am2}

If $\alpha \in \R \setminus \Q$, $\lambda>0$, $E \in \R$
then $L(\alpha,A^{(E-\lambda v)}) \geq \max \{0,\ln \lambda\}$
with equality if and only if
$E \in \Sigma_{\alpha,v}$.

\end{cor}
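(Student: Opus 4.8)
The plan is to combine Theorem \ref{am1} with the two general principles available to us: the characterization of the spectrum by uniform hyperbolicity, and the behavior of the Lyapunov exponent under complexification. First, the inequality $L(\alpha,A^{(E-\lambda v)}) \geq \max\{0,\ln\lambda\}$ is immediate from Theorem \ref{am1} applied at $\epsilon=0$: we have $L(\alpha,A^{(E-\lambda v)})=L(\alpha,A^{(E-\lambda v)}_0)=\max\{L(\alpha,A^{(E-\lambda v)}),\ln\lambda\}\geq\ln\lambda$, and of course $L\geq 0$ always, so $L\geq\max\{0,\ln\lambda\}$. (Strictly speaking, for $0<\lambda\le 1$ there is nothing to prove beyond nonnegativity, so the content is in the case $\lambda>1$, where $L\geq\ln\lambda>0$.)

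For the equality statement, recall that $E\notin\Sigma_{\alpha,v}$ if and only if $(\alpha,A^{(E-\lambda v)})$ is uniformly hyperbolic. If $E\notin\Sigma_{\alpha,\lambda v}$, then $(\alpha,A^{(E-\lambda v)})\in\UH$, hence it is regular by the ``if'' direction of Theorem \ref{uniformly hyperbolic} (or directly by quantization: $\UH$ is open and $L$ is smooth there, so $\omega=0$), which by Remark \ref{omega real} means $\omega(\alpha,A^{(E-\lambda v)})=0$; but then Theorem \ref{am1} forces $L(\alpha,A^{(E-\lambda v)})=\max\{L(\alpha,A^{(E-\lambda v)}),\ln\lambda\}$ to be the affine branch only if the constant branch $L$ already dominates, i.e. $L(\alpha,A^{(E-\lambda v)})\geq\ln\lambda$ and $\omega=0$; more simply, $\omega=0$ together with the formula $L(\alpha,A^{(E-\lambda v)}_\epsilon)=\max\{L,\ln\lambda+2\pi\epsilon\}$ being affine near $\epsilon=0$ with zero slope forces $L(\alpha,A^{(E-\lambda v)})>\ln\lambda$ when $\lambda>1$ (and when $0<\lambda\le 1$ it forces $L=0=\max\{0,\ln\lambda\}$ directly, since $\omega=0$ and the complexified exponent is eventually $2\pi\epsilon+\ln\lambda$ with slope $1$, so to be piecewise affine with a flat piece near $0$ we need $L=0$). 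Hence outside the spectrum $L>\max\{0,\ln\lambda\}$ is impossible to rule out this way — so I instead argue the contrapositive for the ``only if'' half and a separate argument for ``if''.

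Let me restructure: to show $L=\max\{0,\ln\lambda\}$ when $E\in\Sigma_{\alpha,\lambda v}$, I would invoke the already-cited fact (Bourgain--Jitomirskaya, here being rederived) that on the spectrum the complexified exponent cannot have a flat piece followed by a corner at a positive height unless forced — more concretely, use Theorem \ref{am1}: on $\Sigma_{\alpha,\lambda v}$ the cocycle is not uniformly hyperbolic, so by Theorem \ref{uniformly hyperbolic} it cannot be both regular and have positive Lyapunov exponent; thus either $L=0$ (which, combined with $L\geq\ln\lambda$ when $\lambda>1$, is impossible for $\lambda>1$, leaving $L=0=\max\{0,\ln\lambda\}$ only when $\lambda\le 1$) or $L>0$ and $\omega\geq 1$. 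In the latter case, by real-symmetry and Theorem \ref{am1} we have $\omega=1$ exactly, and the formula $L(\alpha,A^{(E-\lambda v)}_\epsilon)=\max\{L,\ln\lambda+2\pi\epsilon\}$ is affine with slope $2\pi$ on all of $[0,\infty)$, which forces the ``$\max$'' to equal $\ln\lambda+2\pi\epsilon$ for every $\epsilon\geq 0$, in particular at $\epsilon=0$: $L=\ln\lambda$. Combined with $L\geq\max\{0,\ln\lambda\}$, and since $L=\ln\lambda\geq 0$ here forces $\lambda\geq 1$, we get $L=\max\{0,\ln\lambda\}$ in all cases where $E\in\Sigma$. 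Conversely, if $E\notin\Sigma_{\alpha,\lambda v}$ the cocycle is uniformly hyperbolic, hence regular with $\omega=0$, hence by Theorem \ref{am1} the function $\epsilon\mapsto\max\{L,\ln\lambda+2\pi\epsilon\}$ is flat near $\epsilon=0$, which is only possible if $L>\ln\lambda$ (strict), i.e. $L>\max\{0,\ln\lambda\}$. So equality holds precisely on the spectrum.

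The main obstacle is bookkeeping the low-coupling case $0<\lambda\le 1$ cleanly: there $\ln\lambda\le 0$ so $\max\{0,\ln\lambda\}=0$, and one must check that $E\in\Sigma$ really does give $L=0$ rather than merely $L\geq 0$; this is exactly where Theorem \ref{uniformly hyperbolic} does the work (non-uniform hyperbolicity on the spectrum would give $\omega\geq1$, but real-symmetry plus Theorem \ref{am1} pins $\omega=1$ and then $L=\ln\lambda\le 0$, contradicting $L\geq 0$ unless $\lambda=1$, in which case $L=0$ anyway; for $\lambda<1$ we are forced into the regular branch, so $\omega=0$, and then Theorem \ref{am1}'s formula with slope eventually $2\pi$ forces $L=0$). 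Packaging these cases into a few lines without a tedious enumeration is the only delicate point; everything else is a direct consequence of Theorem \ref{am1} and the uniform-hyperbolicity characterization of the spectrum.
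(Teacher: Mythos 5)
Your proof is correct and follows essentially the same route as the paper: the inequality is read off from Theorem~\ref{am1} at $\epsilon=0$, and the equality criterion reduces to Theorem~\ref{uniformly hyperbolic}'s characterization of uniform hyperbolicity as $L>0$ together with $\omega=0$ (via Remark~\ref{omega real}), which Theorem~\ref{am1} shows is exactly when the inequality is strict. The paper states this in two lines without any $\lambda$-case split; your extended bookkeeping (and the slightly garbled claim near the end that ``slope eventually $2\pi$ forces $L=0$'' for $\lambda<1$ --- what actually forces $L=0$ there is that $L>0$ with $\omega=0$ would give uniform hyperbolicity off the spectrum) is unnecessary once one observes that strictness $\Leftrightarrow$ $L>\ln\lambda$ and $L>0$ $\Leftrightarrow$ $\omega=0$ and $L>0$.
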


\begin{proof}

The complement of the spectrum consists precisely of energies with
positive Lyapunov exponent and zero acceleration (as those two properties
characterize uniform hyperbolicity for $\SL(2,\R)$-valued cocycles by
Theorem \ref {uniformly hyperbolic}).

The previous theorem gives the inequality, and shows that it is strict if
and only if $L(\alpha,A^{(E-\lambda v)})>0$ and $\omega(\alpha,A^{(E-\lambda
v)})=0$.
\end{proof}

\subsection{Proof of the Example Theorem}

Fix $\alpha \in \R \setminus \Q$, $\lambda>1$ and $w \in
C^\omega_\delta(\R/\Z,\R)$.  Let
$v_\epsilon=\lambda v+\epsilon w$.

\begin{lemma}

If $\epsilon$ is sufficiently small, and
$E \in \Sigma_{\alpha,v_\epsilon}$ then
$\omega(\alpha,A^{(E-v_\epsilon)})=1$.

\end{lemma}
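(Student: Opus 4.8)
The plan is to show that for small $\epsilon$ the acceleration $\omega(\alpha,A^{(E-v_\epsilon)})$ is squeezed between $1$ and $1$ on the spectrum, using upper semicontinuity of the acceleration (plus the almost Mathieu computation of Theorem~\ref{am1}) for the upper bound, and the characterization of uniform hyperbolicity (Theorem~\ref{uniformly hyperbolic}) together with continuity of the Lyapunov exponent for the lower bound. The one thing to watch is uniformity in $E$; this is available because $\Sigma_{\alpha,v_\epsilon}\subseteq[\inf v_\epsilon-2,\sup v_\epsilon+2]$, so all these spectra lie in a single compact interval $I\subset\R$ once $|\epsilon|\le 1$, and the map $(\epsilon,E)\mapsto A^{(E-v_\epsilon)}$ is continuous (in fact affine) into $C^\omega_\delta(\R/\Z,\SL(2,\C))$.

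For the upper bound I would first observe that, by Theorem~\ref{am1}, for every real $E$ the function $\epsilon\mapsto L(\alpha,A^{(E-\lambda v)}_\epsilon)$ is the maximum of a constant and an affine function of slope $2\pi$, so its right derivative at $0$ is $0$ or $2\pi$; hence $\omega(\alpha,A^{(E-\lambda v)})\le 1$ for all $E\in\R$. By upper semicontinuity of the acceleration and continuity of $(\epsilon,E)\mapsto A^{(E-v_\epsilon)}$, the function $(\epsilon,E)\mapsto\omega(\alpha,A^{(E-v_\epsilon)})$ is upper semicontinuous, and being integer-valued (Theorem~\ref{quantized}) the set where it is $\le 1$ is open; by the preceding remark it contains $\{0\}\times I$. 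By the tube lemma it contains $(-\epsilon_0,\epsilon_0)\times I$ for some $\epsilon_0>0$, so $\omega(\alpha,A^{(E-v_\epsilon)})\le 1$ whenever $|\epsilon|<\epsilon_0$ and $E\in\Sigma_{\alpha,v_\epsilon}\subseteq I$.

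For the lower bound, Corollary~\ref{am2} gives $L(\alpha,A^{(E-\lambda v)})\ge\max\{0,\ln\lambda\}=\ln\lambda>0$ for all $E\in\R$. By continuity of the Lyapunov exponent (Theorem~\ref{con}) and the tube lemma applied again to the compact interval $I$, after shrinking $\epsilon_0$ we get $L(\alpha,A^{(E-v_\epsilon)})>0$ for all $|\epsilon|<\epsilon_0$ and $E\in I$. Fixing such an $\epsilon$ and an $E\in\Sigma_{\alpha,v_\epsilon}$, the cocycle $(\alpha,A^{(E-v_\epsilon)})$ is not uniformly hyperbolic, so by Theorem~\ref{uniformly hyperbolic} it is not regular; since it is $\SL(2,\R)$-valued with irrational frequency, Remark~\ref{omega real} together with quantization forces $\omega(\alpha,A^{(E-v_\epsilon)})\ge 1$. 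Combining the two bounds gives $\omega(\alpha,A^{(E-v_\epsilon)})=1$, as desired.

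The main (and essentially only) obstacle is the passage from the pointwise semicontinuity/continuity statements to uniform-in-$E$ versions; it is handled purely by compactness (the tube lemma), exploiting that the relevant energies stay in a fixed compact interval independent of $\epsilon$. No dynamical input beyond Theorems~\ref{quantized}, \ref{uniformly hyperbolic}, \ref{am1}, \ref{con} and Corollary~\ref{am2} is required.
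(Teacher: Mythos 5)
Your proof is correct and follows essentially the same route as the paper's: Theorem~\ref{am1} gives the upper bound $\omega\le 1$ on the unperturbed almost Mathieu cocycle, Corollary~\ref{am2} gives positive Lyapunov exponent, upper semicontinuity of the acceleration and continuity of the Lyapunov exponent transfer both to the perturbed family on a fixed compact energy window, and Theorem~\ref{uniformly hyperbolic} together with Remark~\ref{omega real} and quantization rule out $\omega=0$ in the spectrum. The only difference is cosmetic: you spell out the compactness/tube-lemma step explicitly, whereas the paper just invokes continuity and semicontinuity over the fixed interval $[-4\lambda,4\lambda]$ without naming it.
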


\begin{proof}

By Theorem \ref {am1} and Corollary \ref {am2},
$L(\alpha,A^{(E-\lambda v)}) \geq \ln \lambda$ and
$\omega(\alpha,A^{(E-\lambda v)}) \leq 1$ for every $E \in \R$.

For $\epsilon$ small we have $\Sigma_{\alpha,v_\epsilon} \subset
[-4\lambda,4\lambda]$.  By continuity of the Lyapunov exponent and
upper semicontinuity of the acceleration, we get
$\omega(\alpha,A^{(E-v_\epsilon)}) \leq 1$ and
$L(\alpha,A^{(E-v_\epsilon)})>0$ for every $E \in
\Sigma_{\alpha,v_\epsilon}$.

Since $A^{(E-v_\epsilon)}$ is real symmetric,
$\omega(\alpha,A^{(E-v_\epsilon)}) \geq 0$ as well, and if
$\omega(\alpha,A^{(E-v_\epsilon)})=0$ with $E \in
\Sigma_{\alpha,v_\epsilon}$ then $(\alpha,A^{(E-v_\epsilon)})$ is regular. 
This last possibility can not happen: since the Lyapunov exponent is
positive, it would imply uniform hyperbolicity, which can not happen in the
spectrum.  We conclude that $\omega(\alpha,A^{(E-v_\epsilon)})>0$ for $E \in
\Sigma_{\alpha,v_\epsilon}$.  By quantization, this forces
$\omega(\alpha,A^{(E-v_\epsilon)})=1$.
\end{proof}

By Proposition \ref {pluri}, $E \mapsto
L(\alpha,A^{(E-v_\epsilon)})$ coincides in the spectrum with the restriction
of an analytic function ($E \mapsto
L_{\delta,1}(\alpha,A^{(E-v_\epsilon)})$) defined in some neighborhood. 
This concludes the proof of the Example Theorem.\qed

\end{document}